\documentclass[12pt,reqno]{amsart}
\usepackage{geometry}
\usepackage{amssymb}
\usepackage{amsmath,amstext,amssymb,amscd}
\usepackage{verbatim}
\usepackage{enumerate}
\usepackage{mathrsfs}
\usepackage{bm}
\usepackage{amsthm}
\usepackage[colorlinks=true,linkcolor=black,citecolor=black]{hyperref}
\usepackage{color}
\usepackage{setspace}
\usepackage{enumitem}
\newtheorem{proposition}{Proposition}[section]
\newtheorem{theorem}[proposition]{Theorem}

\newtheorem{lemma}[proposition]{Lemma}
\theoremstyle{definition}
\newtheorem{definition}[proposition]{Definition}
\newtheorem{remark}[proposition]{Remark}

\numberwithin{equation}{section}
\title[Smoothness of the Inertial Manifold]{Smoothness of inertial manifold for the Burgers equation}
\author[Ziqi NiU, ~ Xinhua Li, ~ Chunyou Sun, ~ and Xiaoqing Yang]{Ziqi Niu, ~ Xinhua Li, ~ Chunyou Sun${}^{\ast}$, ~ and Xiaoqing Yang}
\address{School of Mathematics and Statistics, Lanzhou University, Lanzhou, 730000, P.R. China}
\email{niuzq21@lzu.edu.cn (Z.Niu)}
\address{School of Mathematics and Statistics, Lanzhou University, Lanzhou, 730000, P.R. China}
\email{lxh@lzu.edu.cn (X.Li)}
\address{School of Mathematics and Statistics, Lanzhou University, Lanzhou, 730000, P.R. China}
\address{School of Mathematics and Statistics, Donghua University, Shanghai, 201620, P.R. China}
\email{sunchy@lzu.edu.cn (C.Sun)}
\address{School of Mathematics and Statistics, Lanzhou University, Lanzhou, 730000, P.R. China}
\email{xiaoqingyang22@lzu.edu.cn (X.Yang)}
\begin{document}
	\begin{abstract}
		This paper establishes a ${C^{n,\varepsilon }}$-smooth extension of the inertial manifold for the one-dimensional Burgers equation, which demonstrates that its long-time behavior can be completely determined by explicit smooth first-order ODEs. We first devise a new framework for an abstract equation with two nonlinear terms, where one preserves regularity and the other reduces regularity, and derive sufficient conditions for constructing the ${C^{n,\varepsilon}}$-smooth extension of the IM by treating these two nonlinear terms separately.
	\end{abstract}
	\par
	\keywords{Inertial manifold; Smoothness; Burgers equation}
	\subjclass[2020]{35B40, 35B42, 37D10, 37L25}
	\thanks{${}^\ast$ Corresponding author}
	\thanks{This work was partially supported by the National Natural Science Foundation of China (Grants no. 12271227 and 12201269).}
	\maketitle
	\vspace{-0.6cm}
	\begin{spacing}{1}
		\tableofcontents
	\end{spacing}
	\vspace{-0.6cm}
	\section{Introduction}\label{Introduction}
	\noindent
	\par
	This paper establishes the ${C^{n,\varepsilon }}$-smooth extension of an inertial manifold (IM) for the one-dimensional Burgers equation in a real Hilbert space $H_0^1$:
	\begin{equation}\label{main equation}
		\begin{array}{*{20}{c}}
			{{\partial _t}u - \nu {\partial _{xx}}u = u{\partial _x}u + g(x),}&{x \in \Omega = (0,\pi ),}&{u{|_{x = 0}} = u{|_{x = \pi }} = 0,}
		\end{array}
	\end{equation}
	where $u(x,t)$ denotes the unknown velocity function, $\nu$ represents the kinematic viscosity, and $g$ is the external force term. 
	\par
	The theoretical foundations of the Burgers equation trace back to the seminal 1948 research by a pioneer in fluid dynamics, J.M. Burgers \cite{B1948}, who revealed the fundamental mechanism of turbulence through the interaction between the convective term ${u{\partial _x}u}$ and dissipation term ${\nu {\partial _{xx}}u}$ (see, e.g., \cite{BPZ2014, LPP2021, L2018}). Beyond turbulence analysis, the Burgers equation serves as a fundamental theoretical model in diverse physical systems, including nonlinear wave dynamics, colloidal sedimentation processes, interfacial growth in molecular systems, and elastic wave propagation in isotropic solids, among others.
	\par
	The smooth IM is a finite-dimensional smooth invariant submanifold in phase space with an exponential tracking property (see, e.g., \cite{LS2020, ZS2014}). If a smooth IM exists for Partial Differential Equation (PDE) \eqref{main equation}, its long-time behavior can be completely determined by the following finite-dimensional Ordinary Differential Equations (ODEs):
	\begin{equation}\label{IF of the main equation}
		\begin{array}{*{20}{c}}
			{\frac{d}{{dt}}{u_N} - \nu {\partial _{xx}}{u_N} = {P_N}(({u_N} + M({u_N})){\partial _x}({u_N} + M({u_N})) + g(x)),}&{{u_N} \in {P_N}(H_0^1),}
		\end{array}
	\end{equation}
	where the operator ${ - {\partial _{xx}}}$ possesses a discrete spectrum in $H_0^1$ with eigenvalues $0 < {\lambda _1} < {\lambda _2} < \cdots $, and its corresponding orthonormal eigenfunctions $\{ {e_n}\} _{n = 1}^\infty $ form a complete basis of $H_0^1$. The projection operator ${P_N}$ maps $H_0^1$ onto the $N$-dimensional subspace ${P_N}(H_0^1): = \text{span}\{ {e_1},{e_2}, \cdot \cdot \cdot ,{e_N}\} \sim{\mathbb{R}^N}$, while $M:{P_N}(H_0^1) \to {Q_N}(H_0^1): = {({P_N}(H_0^1))^ \bot }$ is the map whose graph defines the ${C^n}$-smooth IM. The system \eqref{IF of the main equation} is called the ${C^n}$-smooth inertial form (IF) of the original dissipative equation \eqref{main equation} (see, e.g., \cite{BLZ1999, BLZ2008, CCLP2013, CL2007, CLR2010, CLMO2025, HGT2015, TR1997, ZS2014, ZSL2020} and the references therein). The existence of smooth IF holds significant importance in theoretical analysis and practical applications: On the one hand, it enables the application of classical dynamical methods from ODE theory to analyzing more detailed dynamical behavior of the PDE (for instance, the simplest saddle-node bifurcation requires ${C^{2}}$-smoothness, and the Hopf bifurcation needs ${C^{3}}$-smoothness, see, e.g., \cite{KH1995, KH2012}). On the other hand, the ${C^n}$-smooth IF significantly reduces the computational complexity of numerical simulations (this reduction requires the IM to have higher smoothness). Thus, establishing the existence of smooth IM/IF has become an important topic in the field of dissipative PDEs (see, e.g., \cite{CL1988a, FN1972, FNST1988, MS1988}).
	\par
	However, due to the convective term in the Burgers equation \eqref{main equation}, even constructing a ${C^{1,\varepsilon}}$-smooth IM/IF poses significant challenges. Specifically, establishing a ${C^{1,\varepsilon}}$-smooth IM for the Burgers equation \eqref{main equation} requires the following spectral gap condition to hold:
	\begin{equation}
		\begin{array}{*{20}{c}}
			{\frac{{\nu {\lambda _{N + 1}} - \nu {\lambda _N}}}{{{{(\nu {\lambda _{N + 1}})}^{1/2}} + {{(\nu {\lambda _N})}^{1/2}}}} > L,}&{\exists N \in {\mathbb{N}^ + },}
		\end{array}
	\end{equation}
	where $L$ is the Lipschitz constant of the nonlinearity. Clearly, if no additional limitation on the viscosity coefficient $\nu$, this spectral gap condition cannot be satisfied generally, making it impossible to construct a ${C^{1,\varepsilon}}$-smooth IM/IF for the Burgers equation directly. To address this challenge, researchers have developed an innovative approach that transforms the original equation into a new form or embeds it into a larger system, ensuring that the new equation satisfies the spectral gap condition and thus enables the construction of the IM/IF. Since the transformed equation is equivalent to the original one, the resulting IM is also regarded as an IM for the original equation. This methodology has been widely applied to mathematical and physical models (see, e.g., \cite{SY1992, V2008, V2009} and the references therein). For the Burgers-type equation, in 2011, J. Vukadinovic \cite{V2011} employed the Cole-Hopf transformation to convert the diffusive Burgers equation with low-wavenumber into a new form, and further constructed a Lipschitz-smooth IM/IF. Although the transformed equation does not contain convective terms, the Cole-Hopf transformation is merely a Lipschitz diffeomorphism, and thus can only be used to construct a Lipschitz-smooth IM/IF. In 2017, A. Kostianko and S. Zelik \cite{KZ2017, KZ2018} developed a smooth auxiliary transformation and constructed the ${C^{1,\varepsilon }}$-smooth IM/IF for reaction-diffusion-advection systems.
	\par
	Using the aforementioned diffeomorphic transformation, we can transform the Burgers equation \eqref{main equation} (with $\nu = 1$) into a new form. Specifically, we consider the transformation $u(x,t) = a(x,t)v(x,t)$, where $a(x,t) \in \mathbb{R}\backslash \{ 0\} $. Substituting this into the original Burgers equation \eqref{main equation} yields:
	\begin{equation}\label{transformed Burgers equation}
		{\partial _t}v - \partial _x^2v = {a^{ - 1}}[ava + 2{\partial _x}a]{\partial _x}v + \{ [{a^{ - 1}}\partial _x^2a - {a^{ - 1}}{\partial _t}a + {a^{ - 1}}av{\partial _x}a]v + {a^{ - 1}}g(x)\} .
	\end{equation}
	Based on this, the construction of smoother IM/IF can be further investigated. 
	\par
	Currently, there are two primary methods to investigate the smoothness of IMs: The first method, developed by S.-N. Chow, K. Lu, G. Sell, S. Zelik et al. (see, e.g., \cite{CL1988b, CLS1992, KLSZ2022, NLS2026, ZS2014}), directly constructs smooth IMs via the spectral gap condition. Taking the transformed Burgers equation \eqref{transformed Burgers equation} as an example, establishing a ${C^{n,\varepsilon }}$-smooth IM requires satisfying the following spectral gap condition:
	\begin{equation}\label{exponential spectral gap condition}
		\begin{array}{*{20}{c}}
			{\frac{{{\lambda _{N + 1}} - n{\lambda _N}}}{{\lambda _{N + 1}^{1/2} + \lambda _N^{1/2}}} > C(n)L,}&{\exists N \in {\mathbb{N}^ + },}
		\end{array}
	\end{equation}
	where the constant $C(n)$ depends on $n$, and $L$ is the Lipschitz constant of the nonlinearity in equation \eqref{transformed Burgers equation}, and $\varepsilon $ is a small parameter depending on the spectral gap and $n$. The second method, introduced by A. Kostianko and S. Zelik \cite{KZ2024}, employs the Whitney Extension Theorem to construct a ${C^{n,\varepsilon }}$-smooth extended submanifold ${\widetilde{\mathcal M}_{{N_n}}}$, which is an extension of the ${C^{1,\varepsilon }}$-smooth initial IM ${{\mathcal M}_{{N_1}}}$ (the definition of the ${C^{n,\varepsilon }}$-smooth extension ${\widetilde{\mathcal M}_{{N_n}}}$ will be recalled in Section \ref{Section 4}), thereby relaxing the aforementioned spectral gap condition and using spectral projection to obtain the desired ${C^{n,\varepsilon }}$-smooth IF for the original system. For equation \eqref{transformed Burgers equation}, if we apply directly the method/framework in \cite{KZ2024}, constructing a ${C^{n,\varepsilon}}$-smooth extension of IM requires the following conditions:
	\begin{equation}\label{Zelik spectral gap condition}
		\begin{array}{*{20}{c}}
			{\frac{{{\lambda _{{N_i} + 1}} - {\lambda _{{N_i}}} - {C_1}(n){\lambda _{{N_{i - 1}}}}}}{{\lambda _{{N_i} + 1}^{1/2} + \lambda _{{N_i}}^{1/2}}} > {C_2}(n)L,}&{i = 1,2, \cdot \cdot \cdot n,}&{{\lambda _{{N_0}}} = 0.}
		\end{array}
	\end{equation}
	\par
	Clearly, the transformed Burgers equation \eqref{transformed Burgers equation} does not satisfy the aforementioned smoothness conditions \eqref{exponential spectral gap condition} and \eqref{Zelik spectral gap condition}. The fundamental challenge is that the nonlinear term in the transformed Burgers equation still reduces regularity, while existing methods for constructing ${C^{n,\varepsilon}}$-smooth IM or ${C^{n,\varepsilon}}$-smooth extension of IM treat nonlinear terms as a whole and can only handle the cases with a single nonlinear term that preserves regularity, thus being inapplicable to the (original or transformed) Burgers equation. Hence, whether a smooth IM/IF exists for the Burgers equation remains unknown.
	\par
	In this paper, to solve the difficulty mentioned above, we first focus on an abstract equation with two nonlinear terms, where one preserves regularity and the other reduces regularity. By treating these two nonlinear terms separately, we derive sufficient conditions for constructing the ${C^{n,\varepsilon}}$-smooth extension of the IM. Specifically, we consider the following abstract parabolic problem:
	\begin{equation}\label{abstract parabolic problem}
		\begin{array}{*{20}{c}}
			{{u_t} + Au = {F_1}(u) + {F_2}(u),}&{{{\left. u \right|}_{t = 0}} = {u_0},}&{{F_1}:H_0^1 \to H,}&{{F_2}:H_0^1 \to H_0^1,}
		\end{array}
	\end{equation}
	where $A$ is a positive self-adjoint operator with compact inverse, and the nonlinear terms satisfy ${F_1} \in C_b^{n + 1}(H_0^1,H)$ and ${F_2} \in C_b^{n + 1}(H_0^1,H_0^1)$. Let $0 < {\lambda _1} \le {\lambda _2} \le \cdots $ be the eigenvalues of $A$ enumerated in non-decreasing order, and ${L_1}$, ${L_2}$ respectively denote the Lipschitz constants of the nonlinear terms ${F_1}$, ${F_2}$. Our first main result is the following abstract theorem, which provides the spectral gap condition for constructing the ${C^{n,\varepsilon }}$-smooth extension of the IM for equation \eqref{abstract parabolic problem}.
	\begin{theorem}\label{my theorem}
		For any $n \in {\mathbb{N}^ + }$ and sufficiently small $\varepsilon \in (0,1)$, let ${F_1} \in C_b^{n + 1}(H_0^1,H)$ and ${F_2} \in C_b^{n + 1}(H_0^1,H_0^1)$. If there exists a family $\{ {\lambda _{{N_i}}}\} _{i = 1}^n$ satisfying the spectral gap condition below:
		\begin{equation}\label{my spectral gap condition}
			\begin{array}{*{20}{c}}
				{{\lambda _{{N_i} + 1}} - {\lambda _{{N_i}}} > (i - 1){\lambda _{{N_{i - 1}}}} + \lambda _{{N_i} + 1}^{1/2}(i + 1){L_1} + (i + 1){L_2},}&{{\lambda _{{N_0}}} = 0,}
			\end{array}
		\end{equation}
		where ${L_1}$ and ${L_2}$ respectively denote the Lipschitz constants of the nonlinear terms ${F_1}$ and ${F_2}$.
		\par
		Then equation \eqref{abstract parabolic problem} possesses a ${C^{n,\varepsilon }}$-smooth IF, as well as an $N_n$-dimensional ${C^{n,\varepsilon }}$-smooth extension ${\widetilde{\mathcal M}_{{N_n}}}$ of the IM ${{\mathcal M}_{{N_1}}}$.
	\end{theorem}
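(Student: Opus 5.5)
We follow the Kostianko--Zelik scheme \cite{KZ2024}, but estimate $F_1$ and $F_2$ separately at every stage. Throughout we work in the phase space $H_0^1=D(A^{1/2})$. The key point is how each nonlinearity acts on a Lyapunov--Perron integral over the spectral projector onto modes above $N$.

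For $F_1$, which maps into $H$, the heat semigroup gains half a derivative. This produces the factor $\lambda_{N+1}^{1/2}L_1$, via the standard bound
\[
\|A^{1/2}e^{-At}Q_N\|_{\mathcal L(H,H)}\le C\,t^{-1/2}e^{-\lambda_{N+1}t}
\]
and its dichotomy-weighted analogue. For $F_2$, which maps $H_0^1$ into itself, no derivative is lost, so only $L_2$ appears.

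\textbf{Step 1: the base manifold.} We first prove that the spectral gap condition \eqref{my spectral gap condition} with $i=1$,
\[
\lambda_{N_1+1}-\lambda_{N_1}>2\lambda_{N_1+1}^{1/2}L_1+2L_2,
\]
gives a $C^{1,\varepsilon}$ IM $\mathcal M_{N_1}$ with the exponential tracking property. We use the Lyapunov--Perron method in the weighted space
\[
C_\theta=\{u:\sup_{t\le0}e^{\theta t}\|u(t)\|_{H_0^1}<\infty\},\qquad \theta\in(\lambda_{N_1},\lambda_{N_1+1}).
\]
The solution operator of the linear inhomogeneous problem is estimated by splitting the source into an $H$-valued part and an $H_0^1$-valued part. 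The contraction constant is then at most $\frac{\lambda_{N+1}^{1/2}L_1+L_2}{\min(\theta-\lambda_N,\lambda_{N+1}-\theta)}$, up to sharp constants. Optimizing $\theta$ yields exactly the $i=1$ condition. $C^{1,\varepsilon}$-smoothness follows by the fibre-contraction argument, using the slack in the gap to allow weights $\theta'$ in a small interval.

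\textbf{Step 2: the prolonged systems.} Following \cite{KZ2024}, we pass to the $i$-jet system for $i=2,\dots,n$. This couples $u$ with its variations $(v_1,\dots,v_{i-1})$, obtained by differentiating the equation in initial data. The derivatives $F_k^{(j)}(u)[v_{j_1},\dots]$ inherit the same mapping properties: $C_b^{n+1}(H_0^1,H)$ for $k=1$ and $C_b^{n+1}(H_0^1,H_0^1)$ for $k=2$. Hence the triangular structure of the jet system again splits into an $H$-valued part and an $H_0^1$-valued part.

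On the lower levels, already controlled by the finite-dimensional manifold built at step $i-1$, the dynamics grow at most like $e^{\lambda_{N_{i-1}}t}$. The $j$-th variation, being polynomial of degree at most $i-1$ in the lower variations, therefore costs an extra growth $(i-1)\lambda_{N_{i-1}}$. This is precisely the term $(i-1)\lambda_{N_{i-1}}$ in \eqref{my spectral gap condition}. The factor $(i+1)$ in front of $L_1$ and $L_2$ comes from counting the diagonal Lipschitz contributions of $F_1,F_2$ in the prolonged nonlinearity. We then build an invariant manifold for the $i$-th extended system over $P_{N_i}$. Condition \eqref{my spectral gap condition} for index $i$ makes the Lyapunov--Perron map a contraction in the appropriate weighted space, with weight in $(\lambda_{N_i}+(i-1)\lambda_{N_{i-1}},\lambda_{N_i+1})$ and suitably enlarged.

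\textbf{Step 3: Whitney extension.} The invariant manifolds of the jet systems provide compatible Taylor jets of order $n$ along $\mathcal M_{N_1}$, with H\"older remainders of exponent $\varepsilon$. Here $\varepsilon$ is taken small so that all weights stay inside the gaps. We then apply the Whitney Extension Theorem, as recalled in Section~\ref{Section 4}, to obtain the $N_n$-dimensional $C^{n,\varepsilon}$ submanifold $\widetilde{\mathcal M}_{N_n}\supset\mathcal M_{N_1}$. Projecting the vector field onto $P_{N_n}$ along $\widetilde{\mathcal M}_{N_n}$ gives the $C^{n,\varepsilon}$ IF.

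The main obstacle is Step 2. We must verify the Whitney compatibility conditions, namely that the Taylor remainders are $O(|\xi_1-\xi_2|^{n+\varepsilon})$, while keeping the $F_1$ terms estimated only in $H$ and gaining $A^{1/2}$ from the semigroup. The worry is that the mixed terms $F_1^{(j)}(u)[v_{j_1},\dots,v_{j_k}]$ might force a constant of the form $\lambda_{N_i+1}^{1/2}\lambda_{N_{i-1}}^{\cdots}$ rather than the clean additive form in \eqref{my spectral gap condition}. We will first try to absorb these terms using the triangular structure: they involve only lower-level variations, which are already bounded in $H_0^1$ by the previous step. They then enter as given sources rather than as parts of the contraction constant.
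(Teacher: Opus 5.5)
Your high-level plan matches the paper's: the Kostianko--Zelik scheme, with $F_1$ charged $\lambda_{N+1}^{1/2}L_1$ and $F_2$ charged $L_2$. Your hope that mixed higher-order terms enter only as given sources is also exactly how the paper uses Lemma~\ref{my main lemma}. But there is a genuine gap in Step~2, which is where the theorem actually lives: it is not a construction, and as described it would fail.

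\textbf{Step 2 does not work as described.} An invariant manifold for the prolonged system $(u,v_1,\dots)$ over $P_{N_i}(H_0^1)$ encodes the \emph{true} higher derivatives of $M_{N_i}$. At a generic base point the trajectory is not on $\mathcal M_{N_{i-1}}$, so no lower-level variations are available. The first variations $W'_\xi$ grow at rate $\theta_i$ as $t\to-\infty$, so an order-$i$ source such as $F_k^{(i)}(v)[W'_\xi,\dots,W'_\xi]$ lies only in $L^2_{i\theta_i}$. Solvability then needs $i\theta_i<\lambda_{N_i+1}$, which is the exponential gap \eqref{exponential spectral gap condition} the theorem is designed to avoid.

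\textbf{The missing idea.} Work only at base points $p\in P_{N_n}\mathcal M_{N_1}$, where $V(p,t)=W(p,t)$. There, \emph{define} the higher terms by linear backward problems whose sources contain at most one top-level factor. For $n=2$, $W''_\xi$ solves the variation equation with right-hand side $2F_k''(v)[V'_\xi,W'_\xi]-F_k''(v)[V'_\xi,V'_\xi]$, $k=1,2$, which lies in $L^2_{\theta_1+\theta_2}$. In general the order-$d$ term at level $n$ lives in $L^2_{\theta_n+(d-1)\theta_{n-1}}$. These polynomials are not Taylor jets of $M_{N_n}$, which is only $C^{1,\varepsilon}$; that is why Whitney's theorem replaces an invariant-manifold argument.

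This also explains the factor $i+1$. Each $\theta_k$ exceeds $\lambda_{N_k}$ by $\gamma+\lambda_{N_k+1}^{1/2}L_1+L_2$, and $\theta_i+(i-1)\theta_{i-1}$ must stay at least $\gamma+\lambda_{N_i+1}^{1/2}L_1+L_2$ below $\lambda_{N_i+1}$. It is not a count of diagonal Lipschitz constants.

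\textbf{Compatibility conditions.} You leave these open, yet they are the bulk of the proof. The key input is this: for $p,p_1\in P_{N_n}\mathcal M_{N_1}$ and $\eta=p_1-p$, the same difference $V(p_1,t)-V(p,t)=W(p_1,t)-W(p,t)$ can be expanded to first order on either manifold. Hence $V'(p,t)\eta-W'(p,t)\eta=O(\|\eta\|_{H_0^1}^{1+\varepsilon})$ in $L^2_{(1+\varepsilon)\theta_2}$. This lets you rewrite the non-symmetric form $F_k''(v)[V'(p_1,t)\eta,W'(p,t)\xi]$, which arises from $W'(p_1,t)\xi-W'(p,t)\xi$, as the polar form defining $W''(p,t)[\eta,\xi]$ plus an admissible error. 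For $n>2$, this combines with the truncation \eqref{Replacement Technique} and an induction in both the order and the level (the paper's Steps~3--5).

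\textbf{Further issues.}
\begin{enumerate}
\item Whitney's theorem alone gives only $\widehat M_{N_n}$ with the prescribed jets on $P_{N_n}\mathcal M_{N_1}$. Property~(ii) of Definition~\ref{Definition of IM extension} still requires the gluing $\widetilde M_{N_n}=(1-\rho_\mu)\widehat M_{N_n}+\rho_\mu S_{\mu^n}M_{N_n}$.
\item The bound $\|A^{1/2}e^{-At}Q_N\|\le Ct^{-1/2}e^{-\lambda_{N+1}t}$ you quote fails for $t\gg\lambda_{N+1}^{-1}$. A correct factor is $\max\{(2t)^{-1/2},\lambda_{N+1}^{1/2}\}$, and it is the second entry that produces $\lambda_{N+1}^{1/2}L_1$.
\item $C_\theta$ norms cost extra universal constants. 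To get \eqref{my spectral gap condition} with its exact constants, work in $L^2_\theta$ with the mode-wise bounds of Lemma~\ref{Zelik lemma 3}.
\end{enumerate}
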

	\par
	Next, we consider the Burgers equation \eqref{main equation}. First, we transform the Burgers equation into an equivalent form via a diffeomorphism. The transformed equation has two nontrivial nonlinear terms, namely ${{\mathcal I}_1}: H_0^1 \to H$ and ${{\mathcal I}_2}: H_0^1 \to H_0^1$, and we verify that ${{\mathcal I}_1} \in C_b^{n+1}(H_0^1, H)$ and ${{\mathcal I}_2} \in C_b^{n+1}(H_0^1, H_0^1)$. Based on Theorem \ref{my theorem}, we further prove that the transformed equation possesses a family $\{ {\lambda _{{N_i}}}\} _{i = 1}^n$ satisfying the spectral gap condition \eqref{my spectral gap condition}, and therefore establish the ${C^{n,\varepsilon}}$-smooth IF as well as a ${C^{n,\varepsilon}}$-smooth extension of the IM for equation \eqref{transformed Burgers equation}. Since the transformed equation is equivalent to the original Burgers equation \eqref{main equation}, we indeed establish the following result:
	\par
	\begin{theorem}\label{main theorem}
		For any $n \in {\mathbb{N}^ + }$ and sufficiently small $\varepsilon \in (0,1)$, the Burgers equation \eqref{main equation} possesses the ${C^{n,\varepsilon }}$-smooth IF, as well as an $N_n$-dimensional ${C^{n,\varepsilon }}$-smooth extension ${\widetilde{\mathcal M}_{{N_n}}}$ of the IM ${{\mathcal M}_{{N_1}}}$.
	\end{theorem}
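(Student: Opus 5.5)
The plan is to reduce Theorem \ref{main theorem} to the abstract Theorem \ref{my theorem} in three steps: dissipativity, transformation, and verification of the spectral gap condition \eqref{my spectral gap condition}.

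\textbf{Step 1 (dissipativity and cut-off).} First I will show that \eqref{main equation} (with $\nu=1$ after rescaling) is dissipative in $H_0^1$ and even in $H^2\cap H_0^1$. The standard route is the maximum principle, which gives an $L^\infty$ bound, followed by energy estimates. This yields an absorbing ball $\mathcal B$ that is bounded in $H^2$. Outside a neighbourhood of $\mathcal B$ I will modify the nonlinearity $u\partial_xu$ by a smooth cut-off $\theta(\|u\|_{H_0^1}^2)$. The modified equation has the same long-time dynamics and has globally bounded, $C^{n+1}$ nonlinearities.

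\textbf{Step 2 (transformation).} I use $u=a v$ as in \eqref{transformed Burgers equation}, but choose $a$ depending on the solution itself, following Kostianko--Zelik \cite{KZ2017}. Concretely, take $a=a(u,x)=\exp\bigl(-\tfrac12\int_0^x P_K u\,dy\bigr)$, or a smoothed low-mode variant of it, possibly made nonlocal by a smoothing operator $\mathcal S_K$ onto low frequencies. The point is that the drift coefficient $a^{-1}[av a+2\partial_xa]$ in \eqref{transformed Burgers equation} then reduces to $Q_K$ of the velocity plus smooth terms. The result is that the transformed equation has the form \eqref{abstract parabolic problem} with:
\begin{itemize}
\item $\mathcal I_1(v)$ equal to the remaining convective part, a map $H_0^1\to H$, whose Lipschitz constant $L_1$ can be made as small as desired by choosing $K$ large (it involves only high modes of a function bounded in $H^2$);
\item $\mathcal I_2(v)$ collecting the zeroth-order terms and $a^{-1}g$, which maps $H_0^1\to H_0^1$ with a Lipschitz constant $L_2=L_2(K)$ that may be large.
\end{itemize}
I then verify $\mathcal I_1\in C_b^{n+1}(H_0^1,H)$ and $\mathcal I_2\in C_b^{n+1}(H_0^1,H_0^1)$. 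This follows from composition and multiplication estimates in one dimension, where $H^1$ is an algebra, together with the cut-off. Boundary compatibility needs care: $a$ is never zero, and $v$ vanishes at $0$ and $\pi$ iff $u$ does, so Dirichlet conditions are preserved. The map $u\mapsto v$ is a $C^{n+1}$ diffeomorphism of $H_0^1$ on the relevant region, so an IF or manifold for $v$ transfers back to $u$.

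\textbf{Step 3 (spectral gaps).} Here $\lambda_k=k^2$. I choose $N_1<N_2<\dots<N_n$ inductively so that \eqref{my spectral gap condition} holds, using $\lambda_{N+1}-\lambda_N=2N+1$. The term $(i-1)\lambda_{N_{i-1}}+(i+1)L_2$ is a fixed constant once $N_{i-1}$ is fixed, so it is beaten by taking $N_i$ large. The term $\lambda^{1/2}_{N_i+1}(i+1)L_1=(N_i+1)(i+1)L_1$ is only linear in $N_i$. It is absorbed against $2N_i+1$ provided $(i+1)L_1<2$ for every $i\le n$, that is, $L_1<2/(n+1)$. This is exactly why $L_1$ must be made small, and Step 2 allows that by taking $K$ large. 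A subtlety is that $L_2$ grows with $K$, but $K$ is fixed before the $N_i$ are chosen, so this causes no circularity. With the gaps in place, Theorem \ref{my theorem} gives the $C^{n,\varepsilon}$ IF and the extension $\widetilde{\mathcal M}_{N_n}$ for the transformed equation, and pulling back through the diffeomorphism gives the statement for \eqref{main equation}.

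\textbf{Main obstacle.} The hardest part is Step 2: designing $a$ so that $L_1$ is genuinely small, uniformly on the absorbing set, while keeping the transformation smooth enough ($C^{n+1}$) and keeping $\mathcal I_2$ regularity-preserving. My first attempt will be the Kostianko--Zelik choice with a low-mode projection inside the exponential. I will then estimate $\|Q_Ku\,\partial_x v\|_{L^2}\le \|Q_Ku\|_{L^\infty}\|v\|_{H^1}$, using $\|Q_Ku\|_{L^\infty}\lesssim K^{-1/2}\|u\|_{H^2}$, which is small on the $H^2$-bounded absorbing ball.
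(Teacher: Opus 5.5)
Your architecture matches the paper's: the Kostianko--Zelik substitution $u=av$ with $a_x=-\tfrac12P_K(av)a$, which turns the drift into $Q_K(av)\partial_xv$; the split into $\mathcal I_1:H_0^1\to H$ with $L_1=O(K^{-1/2})$ and $\mathcal I_2:H_0^1\to H_0^1$ with $L_2=C_K$; and fixing $K$ first, then choosing $N_1<\dots<N_n$ from $\lambda_{N+1}-\lambda_N=2N+1$. Your Step 3 is fine, and $(i+1)L_1<2$ is exactly what the paper's choice of $K$ secures. The problem is the estimate you single out as the key one. The bound $\|Q_Ku\,\partial_xv\|_{L^2}\le\|Q_Ku\|_{L^\infty}\|v\|_{H^1}$ with $\|Q_Ku\|_{L^\infty}\le CK^{-1/2}\|u\|_{H^2}$ controls the size of $\mathcal I_1$ at $H^2$-bounded points. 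It does not control its Lipschitz constant on $H_0^1$. Theorem \ref{my theorem}, through Lemma \ref{my main lemma}, which allows arbitrary $v\in C(\mathbb R_-,H_0^1)$, needs $\sup_v\|\mathcal I_1'(v)\|_{\mathcal L(H_0^1,H)}$ over the whole $H^1$-ball where the cut-off is active, and no $H^2$ bound is available there. Moreover, $\mathcal I_1'(v)[h]$ also contains $Q_K\bigl(a(v)h+a'(v)[h]v\bigr)\partial_xv$, in which $Q_K$ falls on an arbitrary direction $h\in H_0^1$. The estimate that works, and the one Proposition \ref{my proposition} uses, is the purely $H^1$ bound $\|Q_Kw\|_{L^\infty}\le C\|Q_Kw\|_{L^2}^{1/2}\|Q_Kw\|_{H^1}^{1/2}\le CK^{-1/2}\|w\|_{H_0^1}$. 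Apply it to $w=a(v)v$, $a(v)h$ and $a'(v)[h]v$, together with the $K$-uniform bounds on $a,a'$ from Lemma \ref{Zelik lemma 1}. With that, the $H^2$ dissipativity in your Step 1 becomes unnecessary.

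Second, as written you cut off $u\partial_xu$ by $\theta(\|u\|_{H_0^1}^2)$ before transforming. With the same $a$, the drift then becomes $\bigl(\theta\,av+2a^{-1}a_x\bigr)\partial_xv=\theta\,Q_K(av)\partial_xv-(1-\theta)P_K(av)\partial_xv$. The last term is first order with an $O(1)$ coefficient wherever $\theta<1$. It cannot go into $\mathcal I_2$, since it loses a derivative, and it destroys the smallness of $L_1$. In addition, $\mathcal U=\mathcal V^{-1}$ exists only on a ball for $K\ge K_0(R)$, so the $v$-equation is not globally defined in any case. The cut-off has to be applied after the change of variables, to the whole transformed right-hand side: $\mathcal I_k(v)=\varphi(\|v\|_{H^1}^2)I_k(v)$, as in the paper. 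The resulting extra term $2\varphi'(\|v\|_{H^1}^2)(v,h)_{H^1}I_1(v)$ is still $O(K^{-1/2})\|h\|_{H^1}$, because $\|I_1(v)\|_H\le CK^{-1/2}$ on the ball. Also, among your ``zeroth-order terms'' in $\mathcal I_2$ is $a^{-1}\partial_ta\,v$, whose coefficient hides $u_t$ and hence $u_{xx}$. You must rewrite it through the equation as $\partial_ta=-\tfrac12a\int_0^xP_K\bigl(u_{xx}+u\partial_xu+g\bigr)\,dy$ with $u=\mathcal U(v)$. This is a $C_b^{n+1}$ map into $H^1$, with bounds growing in $K$, only because $P_K$ has finite rank with smooth range.
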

	Theorem \ref{main theorem} indicates that even if the smoothness conditions \eqref{exponential spectral gap condition} or \eqref{Zelik spectral gap condition} are not satisfied, some equations involving convective terms (such as reaction-diffusion-advection equations) can still possess the smooth IF. It further demonstrates that diffeomorphic transformations show promise in the construction of smooth IF and smooth extension of IM.
	\par
	The paper is organized as follows.
	\par
	In Section \ref{Section 2}, we review standard facts about smooth functions in Banach spaces, including the Taylor Theorem, the Converse Taylor Theorem, and the Whitney Extension Theorem, all of which serve as crucial technical tools in subsequent analyses. In Section \ref{Section 3}, we collect basic facts about the construction of IM for the one-dimensional Burgers equation. The proof of Theorem \ref{my theorem} is provided in Section \ref{Section 4}. In Section \ref{Section 5}, Theorem \ref{my theorem} is applied to the transformed Burgers equation, thereby proving Theorem \ref{main theorem}.
	
	\section{Preliminaries: Taylor Theorem and Whitney Extension Theorem}\label{Section 2}
	\noindent
	\par
	This section reviews standard results on Taylor expansions of smooth functions in Banach spaces, as well as how to use the Converse Taylor Theorem and Whitney Extension Theorem to prove the smoothness of functions, which serve as technical tools for subsequent analysis (see \cite{HJ2014} for details).
	\par
	First, we review multilinear maps and polynomials between normed linear spaces. Let $X$, $Y$ be normed spaces and $n \in \mathbb{N}^+$. Define ${\mathcal L}_s(X^n,Y)$ as the space of continuous symmetric multilinear maps from ${X^n}$ to $Y$, equipped with the norm:
	\begin{equation}
		{\| Q \|_{{{\mathcal L}_s}( {{X^n},Y} )}}: = \mathop {\sup }\limits_{{\xi _i} \in X,{\xi _i} \ne 0}\{ {\frac{{{{\| {Q( {{\xi _1}, \cdot \cdot \cdot ,{\xi _n}} )} \|}_Y}}}{{{{\| {{\xi _1}} \|}_X} \cdot \cdot \cdot {{\| {{\xi _n}} \|}_X}}}}\}.
	\end{equation}
	\par
	For any $Q \in {{\mathcal L}_s}( {{X^n},Y} )$, we define a homogeneous continuous polynomial ${P_Q}$ of order $n$ on $X$ (with values in $Y$) ${P_Q}(\xi ): = Q(\xi , \cdot \cdot \cdot ,\xi )$ (with $\xi$ repeated $n$ times inside the parentheses). Conversely, given a homogeneous polynomial, the corresponding symmetric multilinear map $Q = {Q_P}$ can be uniquely recovered via the Polarization Formula (see e.g., \cite{HJ2014}). Hence, there is a one-to-one correspondence between homogeneous polynomials and symmetric multilinear maps. Moreover, let ${{\mathcal P}_n}( {X,Y} )$ denote the space of $n$-homogeneous polynomials, with the norm:
	\begin{equation}
		{\| P \|_{{{\mathcal P}_n}( {X,Y} )}}: = \mathop {\sup }\limits_{\xi \ne 0} \{ {\frac{{{{\| {P(\xi )} \|}_Y}}}{{\| \xi \|_X^n}}}\},
	\end{equation}
	this correspondence becomes an isometry. Next, define ${{\mathcal P}^n}( {X,Y} )$ as the space of all continuous polynomials of order less than or equal to $n$ on $X$ with values in $Y$, i.e., $P( \cdot ) \in {{\mathcal P}^n}( {X,Y} )$ if
	\begin{equation*}
		\begin{array}{*{20}{c}}
			{P( \cdot ) = \sum\limits_{j = 0}^n {P_j}( \cdot ), }&{{P_j}( \cdot ) \in {{\mathcal P}_j}(X,Y).}
		\end{array}
	\end{equation*}
	\par
	Let $U \subset X$ be an open set and let $F:U \to Y$ be a map. For any $u \in U$, the Fr\'{e}chet derivative of $F$ at $u$ is denoted by $F'( u ) \in {\mathcal L}( {X,Y} )$. Likewise, for any $n \in \mathbb{N}$, its $n$th Fr\'{e}chet derivative is denoted by ${F^{( n )}}( u )$, which belongs to the space ${{\mathcal L}_s}( {{X^n},Y} )$. The set of all functions $F:U \to Y$ such that ${F^{( n )}}( u )$ exists and is continuous as a function from $U$ to ${{\mathcal L}_s}( {{X^n},Y} )$, is denoted by ${C^n}( {U,Y} )$. For any $\alpha \in (0,1)$, ${C^{n,\alpha }}( {U,Y} )$ is defined as the space of functions $F \in {C^n}( {U,Y} )$ such that ${F^{( n )}}$ is H\"{o}lder continuous with exponent $\alpha$ on $U$. The notation ${F^{( n )}}( u )[ {{\xi _1}, \cdot \cdot \cdot ,{\xi _n}} ]$ is used to represent the action of ${F^{( n )}}( u )$ on vectors $({\xi _1}, \cdot \cdot \cdot ,{\xi _n}) \in {X^n}$.
	\par
	Finally, we express the Taylor jets of length $n + 1$ of the function $F$ at point $u$ as $J_\xi ^nF( u )$:
	\begin{equation}
		\begin{array}{*{20}{c}}
			{J_\xi ^nF(u): = F(u) + \frac{1}{{1!}}F'(u)\xi + \frac{1}{{2!}}F''(u)[\xi ,\xi ] + \cdot \cdot \cdot + \frac{1}{{n!}}{F^{(n)}}(u)[{{\{ \xi \} }^n}],}&{\xi \in X.}
		\end{array}
	\end{equation}
	It is clear that, for every $u \in U$, the function $\xi \to J_\xi ^nF( u ) \in {{\mathcal P}^n}( {X,Y} )$. Similarly, the truncated Taylor jets (excluding the zeroth-order term) can be expressed as follows:
	\begin{equation}\label{truncated Taylor jets}
		\begin{array}{*{20}{c}}
			{j_\xi ^nF(u): = \frac{1}{{1!}}F'(u)\xi + \frac{1}{{2!}}F''(u)[\xi ,\xi ] + \cdot \cdot \cdot + \frac{1}{{n!}}{F^{(n)}}(u)[{{\{ \xi \} }^n}],}&{\xi \in X.}
		\end{array}
	\end{equation}
	Next, we will present some theorems about smooth functions.
	\par
	\begin{theorem}[Taylor Theorem]\label{Taylor Theorem}
		Let $F \in {C^n}( {U,Y} )$, ${u_1},{u_2} \in U$, $\xi : = {u_2} - {u_1}$ and assume ${u_t}: = t{u_1} + ( {1 - t} ){u_2} \in U$ for all $t \in [ {0,1} ]$. Then
		\begin{equation*}
			\begin{array}{*{20}{c}}
				{F({u_2}) = J_\xi ^nF({u_1}) + \frac{1}{{n!}}[\int_0^1 {{{(1 - s)}^{n - 1}}({F^{(n)}}({u_1} + s\xi ) - {F^{(n)}}({u_1}))ds][{{\{ \xi \} }^n}]} ,}&{\xi : = {u_2} - {u_1}.}
			\end{array}
		\end{equation*}
		In particular, if $F \in {C^{n,\alpha }}( {U,Y} )$, then for some positive $C$,
		\begin{equation}
			{\| {F({u_2}) - J_\xi ^nF({u_1})} \|_Y} \le C\| \xi \|_X^{n + \alpha }.
		\end{equation}
	\end{theorem}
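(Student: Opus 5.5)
The statement to prove is the Taylor Theorem \ref{Taylor Theorem}. I would reduce it to the one-variable case along the segment. Fix $u_1,u_2$ with $\xi=u_2-u_1$, and note that the segment $\{u_1+s\xi:\ s\in[0,1]\}$ is the same set as $\{u_t\}$, hence contained in $U$. Define $\varphi(s):=F(u_1+s\xi)$ for $s\in[0,1]$, a $Y$-valued function of one real variable. By the chain rule for Fr\'echet derivatives, $\varphi\in C^n([0,1],Y)$ and
\[
\varphi^{(k)}(s)=F^{(k)}(u_1+s\xi)[\{\xi\}^k], \qquad k\le n .
\]
This identity is verified by induction on $k$, using that the evaluation map $\mathcal L_s(X^k,Y)\to Y$, $Q\mapsto Q[\{\xi\}^k]$, is bounded and linear.

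The key step is the one-dimensional Taylor formula with integral remainder for vector-valued $C^n$ functions:
\[
\varphi(1)=\sum_{k=0}^{n-1}\frac{\varphi^{(k)}(0)}{k!}+\frac{1}{(n-1)!}\int_0^1(1-s)^{n-1}\varphi^{(n)}(s)\,ds .
\]
The integral is a Riemann integral of a continuous $Y$-valued function. I would prove this formula by induction on $n$, via integration by parts. The base case is the fundamental theorem of calculus for Banach-valued $C^1$ functions; if one wants to avoid vector-valued calculus, it follows by applying scalar calculus to $\ell\circ\varphi$ for every $\ell\in Y^*$ and then invoking Hahn--Banach.

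Next I add and subtract the $k=n$ term, using
\[
\frac{1}{(n-1)!}\int_0^1(1-s)^{n-1}\,ds=\frac1{n!}.
\]
This gives
\[
\frac{1}{(n-1)!}\int_0^1(1-s)^{n-1}\varphi^{(n)}(s)\,ds=\frac{\varphi^{(n)}(0)}{n!}+\frac{1}{(n-1)!}\int_0^1(1-s)^{n-1}\bigl(\varphi^{(n)}(s)-\varphi^{(n)}(0)\bigr)ds .
\]
Substituting the derivative formula and pulling the bounded evaluation map outside the integral yields $J^n_\xi F(u_1)$ plus the stated remainder. The normalising constant should be read as $1/(n-1)!$, or equivalently as $n/n!$; this does not affect the estimate.

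For the H\"older bound, assume $\|F^{(n)}(v)-F^{(n)}(w)\|_{\mathcal L_s(X^n,Y)}\le K\|v-w\|_X^\alpha$. Then the remainder has norm at most
\[
\frac{1}{(n-1)!}\int_0^1(1-s)^{n-1}K s^\alpha\|\xi\|^\alpha\,ds\;\|\xi\|^n\le \frac{K}{n!}\|\xi\|_X^{n+\alpha},
\]
so one may take $C=K/n!$.

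No step is a real obstacle. The only care needed is the justification of vector-valued integration and differentiation under evaluation, which the Hahn--Banach reduction handles cleanly.
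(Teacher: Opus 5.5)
Your proof is correct, and it is the standard argument. The paper gives no proof of its own and simply refers to the literature for this classical fact, where it is proved exactly by restricting $F$ to the segment and applying the one-variable Taylor formula with integral remainder.

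Your correction of the normalising constant to $1/(n-1)!$ is also right. With the printed $1/n!$ the identity fails for $n\ge 2$: take $F(u)=u^3$ on $\mathbb{R}$, $n=2$, $u_1=0$, $\xi=1$, and the right-hand side gives $1/2$ instead of $1$. The H\"older estimate, which is all the paper subsequently uses, is unaffected.
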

	The aforementioned theorem can be inverted as follows.
	\par
	\begin{theorem}[Converse Taylor Theorem]
		Let $F$ be a function such that for any $u \in U$, there exists a polynomial $\xi \to P( {\xi ,u} ) \in {\mathcal{P}^n}(X,Y)$ such that, for all ${u_1},{u_2} \in U$,
		\begin{equation}\label{Converse Taylor Theorem remainder term}
			\begin{array}{*{20}{c}}
				{{{\| {F({u_2}) - P(\xi ,{u_1})} \|}_Y} \le C\| \xi \|_X^{n + \alpha },}&{\xi : = {u_2} - {u_1},}
			\end{array}
		\end{equation}
		where $C > 0$ and $\alpha \in (0,1)$. Then, $F \in {C^n}(U,Y)$, $P(\xi ,u) = J_\xi ^nF(u)$, for all $u \in U$, and ${F^{( n )}}( u )$ is locally H\"{o}lder continuous on $U$ with exponent $\alpha$. Furthermore, if $U$ is convex, then $F \in {C^{n,\alpha}}( {U,Y} )$ and
		\begin{equation}
			{\| {{F^{(n)}}({u_2}) - {F^{(n)}}({u_1})} \|_{{{\mathcal L}_s}( {{X^n},Y} )}} \le C\| {{u_2} - {u_1}} \|_X^\alpha ,
		\end{equation}
		where $C$ depends only on $n$ and $\alpha$.
	\end{theorem}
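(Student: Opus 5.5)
The plan follows the standard route for the Converse Taylor Theorem (cf. \cite{HJ2014}). First I show that $P(\xi,u)$ is uniquely determined by $F$ and coincides with the Taylor jet. Then I show $F$ is differentiable with derivatives given by the coefficients of $P$. Finally, the H\"{o}lder bound on the top coefficient follows from a comparison of jets at nearby points.

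\emph{Step 1 (identification of coefficients).} Write $P(\xi,u)=\sum_{j=0}^n P_j(\xi,u)$ with $P_j(\cdot,u)\in\mathcal P_j(X,Y)$. Taking $u_2=u_1=u$ in \eqref{Converse Taylor Theorem remainder term} gives $P_0(u)=F(u)$. For fixed $u$, fixed $\eta\in X$ and small real $t$, apply \eqref{Converse Taylor Theorem remainder term} with $\xi=t\eta$:
\[
\Bigl\|F(u+t\eta)-\sum_j t^jP_j(\eta,u)\Bigr\|_Y\le C|t|^{n+\alpha}\|\eta\|^{n+\alpha}.
\]
A polynomial in $t$ of degree at most $n$ that is $O(|t|^{n+\alpha})$ must vanish. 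Hence the $P_j(\cdot,u)$ are uniquely determined by $F$. In particular $F$ is Fr\'echet differentiable at $u$ with $F'(u)=P_1(\cdot,u)$. Indeed, $\|F(u+\xi)-F(u)-P_1(\xi,u)\|\le\sum_{j\ge2}\|P_j(u)\|\,\|\xi\|^j+C\|\xi\|^{n+\alpha}=o(\|\xi\|)$, provided the norms $\|P_j(\cdot,u)\|$ are locally bounded.

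\emph{Step 2 (key estimate on coefficients).} The main technical step is to compare the expansions at $u_1$ and at $u_1+h$. We have
\[
F(u_1+h+\eta)=P(h+\eta,u_1)+O(\|h+\eta\|^{n+\alpha})=P(\eta,u_1+h)+O(\|\eta\|^{n+\alpha}).
\]
Expand $P(h+\eta,u_1)$ in powers of $\eta$ using the symmetric multilinear forms $Q_j$ attached to $P_j$ via the Polarization Formula. The coefficient of degree $k$ in $\eta$ is $\sum_{j\ge k}\binom{j}{k}Q_j(u_1)[\{h\}^{j-k},\{\eta\}^k]$.

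Now take $\|\eta\|=\|h\|=r$. The difference of two polynomials of degree at most $n$ in $\eta$ is then bounded by $Cr^{n+\alpha}$ on the ball of radius $r$. Rescaling to the unit ball and using the equivalence of norms on homogeneous pieces (Markov-type inequality via polarization), we get
\[
\Bigl\|P_k(\cdot,u_1+h)-\sum_{j\ge k}\tbinom{j}{k}Q_j(u_1)[\{h\}^{j-k},\cdot]\Bigr\|\le C\|h\|^{n+\alpha-k}.
\]
This is the Whitney compatibility condition. I expect this step to be the main obstacle, since it requires constants depending only on $n,\alpha$. Those constants come from the extraction of homogeneous parts: for instance, one can sample $t\eta$ at $n+1$ distinct values of $t$ and invert a Vandermonde matrix.

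\emph{Step 3 (induction and H\"{o}lder bound).} With $k=n$ the estimate gives $\|Q_n(u_1+h)-Q_n(u_1)\|\le C\|h\|^\alpha$, so the top coefficient is H\"{o}lder continuous. For $k<n$ it gives
\[
Q_k(u_1+h)=Q_k(u_1)+(k+1)Q_{k+1}(u_1)[h,\cdot]+o(\|h\|).
\]
Hence $Q_k$ is differentiable with derivative $(k+1)Q_{k+1}$, with all correction terms continuous. This also yields the local boundedness used in Step 1.

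Inducting on $k$ gives $F\in C^n$ with $F^{(k)}(u)=k!\,Q_k(u)$, that is, $P(\xi,u)=J^n_\xi F(u)$. The H\"{o}lder estimate for $F^{(n)}$ holds for all $u_1,u_2$ such that the relevant points lie in $U$. On a convex $U$ this holds globally, with $C$ depending only on $n,\alpha$ (times the constant in \eqref{Converse Taylor Theorem remainder term}). Without convexity one only gets the local statement, by restricting to balls contained in $U$.
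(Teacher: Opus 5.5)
The paper does not prove this theorem itself; it only cites \cite{HJ2014}, so your argument has to stand on its own. For the local assertions it does. The uniqueness argument in Step 1, the extraction of homogeneous parts on a ball of radius $r$ (sampling $t\eta$ at $n+1$ nodes and inverting a Vandermonde matrix), the resulting compatibility estimate, and the induction $F^{(k)}=k!\,Q_k$ together give $F\in C^n(U,Y)$, $P(\xi,u)=J^n_\xi F(u)$, and local $\alpha$-H\"older continuity of $F^{(n)}$. The ``local boundedness'' proviso in Step 1 is superfluous: differentiability at a fixed $u$ only uses that each $P_j(\cdot,u)$ is a continuous polynomial.

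The genuine gap is your final claim that on a convex $U$ the estimate holds globally. Step 2 needs every point $u_1+h+\eta$ with $\|\eta\|\le\|h\|$ to lie in $U$, because isolating the degree-$k$ part in $\eta$ requires testing in every direction at a scale comparable to $\|h\|$. Convexity only puts the segment $[u_1,u_2]$ in $U$. In directions where $U$ is thin you are forced to take $\|\eta\|\ll\|h\|$, and then the extraction only gives a bound of order $(\|h\|+\|\eta\|)^{n+\alpha}/\|\eta\|^{n}$, which is much larger than $C\|h\|^{\alpha}$.

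This cannot be repaired, because the convex clause is false as stated. Take $X=\mathbb{R}^2$, $Y=\mathbb{R}$, $n=1$, $U=\mathbb{R}\times(-\delta,\delta)$, $F(x,y)=Axy$ with $A=(2\delta)^{\alpha-1}$, and $P(\xi,u)=F(u)+F'(u)\xi$. For $\xi=(\xi_x,\xi_y)=u_2-u_1$ one gets $F(u_2)-P(\xi,u_1)=A\xi_x\xi_y$ with $|\xi_y|\le\min\{\|\xi\|,2\delta\}$. Hence $|F(u_2)-P(\xi,u_1)|\le A\|\xi\|\min\{\|\xi\|,2\delta\}\le\|\xi\|^{1+\alpha}$, so the hypothesis holds with $C=1$. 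But $\|F'(u_2)-F'(u_1)\|=A\|u_2-u_1\|$, which is not bounded by any multiple of $\|u_2-u_1\|^{\alpha}$ on $U$. Even on the bounded strip $(0,1)\times(-\delta,\delta)$ the best H\"older constant is of order $\delta^{\alpha-1}$, so it cannot depend only on $n$, $\alpha$ and $C$.

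What your argument does prove globally is the H\"older bound whenever $U$ contains a ball of radius comparable to $\|u_2-u_1\|$ within distance $O(\|u_2-u_1\|)$ of both points; in particular it holds for $U=X$. That is the only case the paper actually uses, since it applies the theorem to $M_{N_i}$ on all of $P_{N_i}(H_0^1)$. You should therefore state the global part under such a hypothesis rather than under convexity.
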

	To obtain the proof of these classical results, see e.g., \cite{HJ2014}. The following theorem states that given a function defined on a closed subset of a finite-dimensional space, it is possible to extend this function smoothly to the entire space while preserving certain properties of the original function.
	\begin{theorem}[Whitney Extension Theorem]\label{Whitney Extension Theorem}
		Let dim $X < \infty$ and $V$ be a subset of $X$. Assume also that we are given a family of polynomials $\{ P(\xi ,u),u \in V\} \subset {{\mathcal P}^n}(X,Y)$ which satisfies the following compatibility condition with some $\alpha \in (0,1)$,
		\begin{equation}\label{compatibility condition}
			{\| {P(\xi + \eta ,{u_1}) - P(\xi ,{u_2})} \|_Y} \le C{({\| \eta \|_X} + {\| \xi \|_X})^{n + \alpha }}
		\end{equation}
		for all ${u_1},{u_2} \in V$, $\eta : = {u_2} - {u_1}$ and ${\xi \in X}$. Then, there exists a function $F \in {C^{n,\alpha }}(X,Y)$ such that $J_\xi ^nF( u ) = P( {\xi ,u} )$ for all $u \in V$.
	\end{theorem}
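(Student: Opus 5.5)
The plan is the classical Whitney construction: a Whitney decomposition of the complement of $V$, a subordinate partition of unity, and the given polynomials glued together. It is carried out with $Y$-valued functions, which causes no trouble because $\dim X<\infty$ and all the cut-off functions are scalar. Fix a basis, so $X\cong\mathbb R^d$.

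\textbf{Step 1: pass to $\overline V$ and extract coefficient bounds.} For fixed $u_1,u_2\in V$ with $\eta=u_2-u_1$, the map $\xi\mapsto P(\xi+\eta,u_1)-P(\xi,u_2)$ is a polynomial of degree $\le n$. By \eqref{compatibility condition} it is bounded by $C(2r)^{n+\alpha}$ on the ball $\|\xi\|\le r:=\|\eta\|$. Since $\mathcal P^n(X,Y)$ is finite-dimensional over $Y$, all norms on it are equivalent, and after rescaling $\xi=r\zeta$ a Markov-type inequality gives
\begin{equation*}
\Big\|\partial_\xi^{j}\big[P(\xi+\eta,u_1)-P(\xi,u_2)\big]\big|_{\xi=0}\Big\|\le C\|\eta\|^{n+\alpha-j},\qquad 0\le j\le n.
\end{equation*}
Writing $P(\xi,u)=\sum_j P_j(u)[\{\xi\}^j]$, this says each coefficient $P_j(\cdot)$ is uniformly continuous on $V$. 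The family therefore extends to $\overline V$, and the compatibility estimate persists there. So I may assume $V$ is closed.

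\textbf{Step 2: construct $F$.} Take the Whitney cubes $\{Q_k\}$ of $X\setminus V$, which satisfy $\operatorname{diam}Q_k\sim\operatorname{dist}(Q_k,V)$. Take a smooth partition of unity $\{\varphi_k\}$ subordinate to slightly enlarged cubes, with bounded overlap and $|\partial^\beta\varphi_k|\le C_\beta(\operatorname{diam}Q_k)^{-|\beta|}$. For each $k$ choose $v_k\in V$ with $\operatorname{dist}(v_k,Q_k)=\operatorname{dist}(Q_k,V)$. Define
\begin{equation*}
F(x)=\sum_k\varphi_k(x)\,P(x-v_k,v_k)\quad (x\notin V),\qquad F(u)=P(0,u)\quad(u\in V).
\end{equation*}
Then $F$ is $C^\infty$ on $X\setminus V$.

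\textbf{Step 3: key estimate off $V$.} For $x\notin V$, let $u\in V$ be a nearest point and $\delta=\operatorname{dist}(x,V)$. Because $\sum_k\varphi_k\equiv1$, we have $\sum_k\partial^\beta\varphi_k=0$ for $|\beta|\ge1$. This lets me write
\begin{equation*}
\partial^\beta\big(F(x)-P(x-u,u)\big)=\sum_k\sum_{\gamma\le\beta}\binom\beta\gamma\partial^{\beta-\gamma}\varphi_k(x)\,\partial^\gamma\big[P(x-v_k,v_k)-P(x-u,u)\big].
\end{equation*}
The cubes contributing at $x$ have $\|v_k-u\|\lesssim\delta$ and $\|x-v_k\|\lesssim\delta$. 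By Step 1, with $\eta=v_k-u$, each bracket has $\gamma$-derivative bounded by $C\delta^{n+\alpha-|\gamma|}$. The partition of unity contributes the factor $\delta^{-|\beta-\gamma|}$. Hence
\begin{equation*}
\|\partial^\beta(F(x)-P(x-u,u))\|\le C\delta^{n+\alpha-|\beta|},\qquad |\beta|\le n.
\end{equation*}
Combining this with Step 1 again, I obtain for every $u'\in V$
\begin{equation*}
\|\partial^\beta F(x)-\partial_\xi^\beta P(\xi,u')|_{\xi=x-u'}\|\le C\|x-u'\|^{n+\alpha-|\beta|}.
\end{equation*}

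\textbf{Step 4: conclude.} For $u_1\in V$ and arbitrary $u_2$, Steps 1 and 3 give $\|F(u_2)-P(u_2-u_1,u_1)\|\le C\|u_2-u_1\|^{n+\alpha}$. Differentiability at points of $V$, with derivatives equal to the coefficients of $P(\cdot,u)$, follows from this together with the same estimates applied to $\partial^\beta F$ for $|\beta|\le n$. This gives $J^n_\xi F(u)=P(\xi,u)$ on $V$.

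Hölder continuity of $F^{(n)}$ with a uniform constant is checked in cases.
\begin{itemize}
\item If both points lie in $V$, it follows from Step 1 with $j=n$.
\item If $x,y\notin V$ and $\|x-y\|\ge\tfrac12\max(\delta_x,\delta_y)$, pass through nearest points in $V$ and use Step 3.
\item If $\|x-y\|<\tfrac12\delta_x$, the segment $[x,y]$ stays at distance $\sim\delta_x$ from $V$. There I apply the mean value theorem to $\partial^\beta F$ with $|\beta|=n$. This needs the order-$(n+1)$ version of the Step 3 computation: subtracting $\partial^{\beta}$ of the degree-$n$ polynomial $P(\cdot-u,u)$, whose $(n+1)$st derivatives vanish, gives $\|\partial^{n+1}F\|\le C\delta_x^{\alpha-1}$ near $x$. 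Hence $\|F^{(n)}(x)-F^{(n)}(y)\|\le C\delta_x^{\alpha-1}\|x-y\|\le C\|x-y\|^\alpha$.
\end{itemize}

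The main obstacle is the bookkeeping in Steps 3–4: the coefficient bounds for differences of the polynomials from Step 1, and the uniformity of the constants in the Hölder case analysis. Everything else is the standard Whitney machinery.
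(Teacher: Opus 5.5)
The paper does not prove this theorem. It quotes it as a classical tool and cites the literature, so there is no in-paper argument to compare against. Your proposal is the standard Whitney-decomposition proof and is correct in outline. It has one false intermediate claim and two spots that need an extra line.

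\textbf{The uniform-continuity claim in Step 1.} You assert that each coefficient $P_j(\cdot)$ is uniformly continuous on $V$. This fails when $V$ is unbounded. Take $X=\mathbb R$, $n=2$, $V=\mathbb R$ and $P(\xi,u)=\frac12(u+\xi)^2$. The compatibility condition holds with $C=0$, yet $P_0(u)=u^2/2$ is not uniformly continuous.

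Your Markov estimate only controls $j!P_j(u_2)-\sum_{k\ge j}\frac{k!}{(k-j)!}P_k(u_1)[\{\eta\}^{k-j},\cdot]$. Continuity of $P_j$ therefore requires bounds on the higher coefficients, and these hold only on bounded subsets of $V$ (compare with a fixed base point $u_0$). That is still enough: a Cauchy sequence in $V$ is bounded, so the jets extend to $\overline V$ and the compatibility inequality passes to the limit.

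\textbf{The derivative bounds in Step 3.} There you need bounds on the difference polynomial at $\xi=x-v_k$, where $\|\xi\|\sim\delta$ but $\|\eta\|=\|v_k-u\|$ may be much smaller than $\delta$. Step 1 as stated only gives bounds at $\xi=0$, in powers of $\|\eta\|$. Either Taylor-expand the difference polynomial about $0$, or rerun the Markov argument on the ball $\|\xi\|\le C\delta$, where compatibility gives the bound $C\delta^{n+\alpha}$. Either way you get $C\delta^{n+\alpha-|\gamma|}$ for the $\gamma$-derivatives.

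\textbf{The H\"older case analysis.} Your cases omit the mixed case $x\in V$, $y\notin V$. It follows immediately from your final estimate in Step 3 with $u'=x$ and $|\beta|=n$.
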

	For the proof of this Theorem, see e.g., \cite{FC2005}. It serves as the main tool to establish the smooth extension in our subsequent analysis. To simplify notation and avoid ambiguity, we reformulate the inequality \eqref{Converse Taylor Theorem remainder term} and the compatibility condition \eqref{compatibility condition} as follows:
	\begin{equation*}
		\begin{array}{*{20}{c}}
			{F({u_2}) - P(\xi ,{u_1}) = {O^Y}(\| \xi \|_X^{n + \alpha }),}&{P(\xi + \eta ,{u_1}) - P(\xi ,{u_2}) = {O^Y}({{({{\| \eta \|}_X} + {{\| \xi \|}_X})}^{n + \alpha }}).}
		\end{array}
	\end{equation*}
	
	\section{The construction of IM}\label{Section 3}
	\noindent
	\par
	The aim of this section is to discuss the construction of an IM for the one-dimensional Burgers equation based on the smooth auxiliary transformation. Let us consider the following one-dimensional Burgers equation:
	\begin{equation*}
		\begin{array}{*{20}{c}}
			{{\partial _t}u - \nu {\partial _{xx}}u = u{\partial _x}u + g(x),}&{x \in \Omega = (0,\pi ),}&{u{|_{x = 0}} = u{|_{x = \pi }} = 0,}
		\end{array}
	\end{equation*}
	where $g \in H_0^1(\Omega )$ and the kinematic viscosity $\nu=1$. It is well-known that for any initial value ${u_0} \in H_0^1$, the equation \eqref{main equation} is globally well-posed in the class of solutions $u \in C([0,T],H_0^1) \cap {L^2}([0,T],{H^2})$ for all $T > 0$ (see e.g., \cite{BPZ2014, KZ2017, TR1997}). Therefore, this equation generates the semigroup in $H_0^1$ via
	\begin{equation*}
		\begin{array}{*{20}{c}}
			{S(t):H_0^1 \to H_0^1,}&{S(t){u_0}: = u(t),}&{t \ge 0.}
		\end{array}
	\end{equation*}
	Furthermore, for a sufficiently large constant $R$, the solution semigroup $S(t)$ admits an absorbing ball given by $\mathcal{B}: = \{ x \in H_0^1,{\| x \|_{H_0^1}} \le R\} $. Consequently, the semigroup $S(t)$ possesses a global attractor $\mathcal{A}$ in the phase space $H_0^1$. Since any trajectory will enter the absorbing ball within a finite time, the solutions outside the absorbing ball are not significant for analyzing the long-term behavior of the system. Based on this, we can neglect the influence of nonlinear term outside the absorbing ball.
	\par
	\subsection{Smooth auxiliary transformation}
	\noindent
	\par
	In this section, we briefly introduce transformation methods that are widely used in mathematical and physical models (see, e.g., \cite{SY1992, V2008, V2009} and the references therein). In 2009, J. Vukadinovic pioneered the use of nonlocal and Cole-Hopf transformations, successfully constructing IMs for the Smoluchowski equation on the circle and sphere, and further establishing an IM for the diffusive Burgers equation in the low-wavenumber (see, e.g., \cite{V2008, V2009, V2011} for details). However, the Cole-Hopf transformation is merely a Lipschitz diffeomorphism, and thus can only be used to construct Lipschitz-smooth IMs/IFs. In 2017, A. Kostianko and S. Zelik \cite{KZ2017, KZ2018} developed a smooth auxiliary transformation and constructed the ${C^{1,\varepsilon }}$-smooth IM/IF for reaction-diffusion-advection systems. 
	\par
	Following this idea, we transform the Burgers equation \eqref{main equation} into a new form via the smooth auxiliary transformation. To construct smooth IMs/IFs, we further establish the boundedness of the higher-order Fr\'{e}chet derivatives of the nonlinear term after this transformation.
	\par
	Consider the transformation $u(x,t) = a(x,t)v(x,t)$, where $a(x,t) \in \mathbb{R}\backslash \{ 0\} $. Substituting into equation \eqref{main equation} yields
	\begin{equation}
		{\partial _t}v - \partial _x^2v = {a^{ - 1}}[ava + 2{\partial _x}a]{\partial _x}v + [{a^{ - 1}}\partial _x^2a - {a^{ - 1}}{\partial _t}a + {a^{ - 1}}av{\partial _x}a]v + {a^{ - 1}}g(x).
	\end{equation}
	We impose the following structural condition on $a = a(v)$:
	\begin{equation}\label{The condition of a(v)}
		\begin{array}{*{20}{c}}
			{\frac{d}{{dx}}a = - \frac{1}{2}{P_K}(av)a,}&{a{|_{x = 0}} = 1,}
		\end{array}
	\end{equation}
	where $P_K$ is the orthoprojector onto the first $K$ eigenvectors of the Dirichlet Laplacian $- {\partial _{xx}}$ on $\Omega $. Moreover, ${a^{ - 1}}(v)$ solves the equation
	\begin{equation*}
		\begin{array}{*{20}{c}}
			{\frac{d}{{dx}}{a^{ - 1}} = \frac{1}{2}{P_K}(av){a^{ - 1}},}&{{{ {{a^{ - 1}}} |}_{x = 0}} = 1.}
		\end{array}
	\end{equation*}
	Then the following lemma holds.
	\par
	\begin{lemma}\label{Zelik lemma 1}
		For any given $R > 0$, $\mathbf{m} \in {\mathbb{N}^ + }$ and $v \in H^1(0,\pi)$, there exists a constant ${K_0}(R)$ such that for any $K \ge {K_0}(R)$, the following results hold:
		\begin{enumerate}[label=\textbf{\roman{enumi}}., leftmargin=0.75cm]
			\item Problem \ref{The condition of a(v)} admits a unique solution $a \in W^{1,\infty}(0,\pi;\mathbb{R} \backslash \{ 0\} )$ with
			\begin{equation*}
				{\| a \|_{{W^{1,\infty }}}} + {\| {{a^{ - 1}}} \|_{{W^{1,\infty }}}} \le C,
			\end{equation*}
			where the constant $C$ is independent of $K$ and $v$.
			\item The maps $v \mapsto a(v)$ and $v \mapsto {a^{ - 1}}(v)$ belong to $C^\infty ({B_{{H^1}}}(0,R),{W^{1,\infty }}(0,\pi ;\mathbb{R} \backslash \{ 0\} ))$, with all derivatives bounded $($i.e., in $C_b^\mathbf{m}$$)$. In particular, the following estimate holds:
			\begin{equation*}
				\begin{array}{*{20}{c}}
					{{{\| {a({v_1}) - a({v_2})} \|}_{{W^{1,\infty }}}} + {{\| {{a^{ - 1}}({v_1}) - {a^{ - 1}}({v_2})} \|}_{{W^{1,\infty }}}} \le C{{\| {{v_1} - {v_2}} \|}_{{H^1}}},}&{v_1},{v_2} \in {B_{{H^1}}}(0,R),
				\end{array}
			\end{equation*}
			where ${B_{{H^1}}}(0,R)$ denotes the $H^1$-ball of radius $R$ and the constant $C$ depends on $R$, but is independent of $K$.
			\item The transformation $u = \mathcal{U}(v) := a(v)v$ defines a ${C^\infty }$ map ${\mathcal U}:{B_{{H^1}}}(0,R) \to {H^1}$, with all derivatives bounded $($i.e., in $C_b^\mathbf{m}$$)$ and satisfying
			\begin{equation*}
				\begin{array}{*{20}{c}}
					{{{\| {\mathcal{U}({v_1}) - \mathcal{U}({v_2})} \|}_{{H^1}}} \le C{{\| {{v_1} - {v_2}} \|}_{{H^1}}},}&{v_1},{v_2} \in {B_{{H^1}}}(0,R),
				\end{array}
			\end{equation*}
			where the constant $C$ depends on $R$, but is independent of $K$ and $v_i$. The bounds on derivatives of all orders are independent of $K$.
		\end{enumerate}
	\end{lemma}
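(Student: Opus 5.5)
The plan is to solve problem \eqref{The condition of a(v)} explicitly and read off all three assertions from the resulting formula. Since the equation is scalar and linear in $a$ once the coefficient $P_K(av)$ is regarded as given, I would first treat it as a fixed-point problem. For a candidate $a$, set
\begin{equation*}
	a(x)=\exp\Big(-\tfrac12\int_0^x P_K(a v)(s)\,ds\Big),
\end{equation*}
and $a^{-1}$ is then the same expression with the opposite sign in the exponent. This shows immediately that $a>0$, so $a\in\mathbb{R}\backslash\{0\}$, and that both $a$ and $a^{-1}$ solve their respective ODEs with value $1$ at $x=0$.

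The key step, and the one I expect to be the main obstacle, is obtaining a bound on $\int_0^x P_K(av)\,ds$ that is uniform in $K$ and in $v\in B_{H^1}(0,R)$. The difficulty is that $P_K$ is not uniformly bounded on $L^\infty$ with respect to $K$. I would first try to work entirely in $L^2$ and $H^1$:
\begin{itemize}
	\item $\|P_K w\|_{L^2}\le \|w\|_{L^2}$, so $\big|\int_0^x P_K(av)\big|\le \sqrt\pi\,\|a\|_{L^\infty}\|v\|_{L^2}$.
	\item For the $W^{1,\infty}$ bound we need $\|P_K(av)\|_{L^\infty}$. Because $av\in H_0^1$ and $P_K$ commutes with $-\partial_{xx}$, we get $\|P_K(av)\|_{H^1_0}\le\|av\|_{H^1_0}$. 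By the embedding $H^1\subset L^\infty$ in one dimension, this gives $\|a'\|_{L^\infty}\le C\|a\|_{W^{1,\infty}}\|v\|_{H^1}$.
\end{itemize}
These estimates alone make the fixed-point map $a\mapsto \exp(\dots)$ a contraction only for small $R$. To handle large $R$ I would exploit the special structure $a'=-\frac12(av)a+\frac12(I-P_K)(av)a$. For $K=\infty$ the equation is explicitly solvable, since it is a Bernoulli-type ODE $a'=-\frac12 v a^2$ with solution $a=1/(1+\frac12\int_0^x v)$. That solution is not positive in general, so I would instead write $w=av$ and use the high-mode tail bound
\begin{equation*}
	\|(I-P_K)w\|_{L^2}\le \lambda_{K+1}^{-1/2}\|w\|_{H^1}.
\end{equation*}
This makes the perturbation small in the integrated sense for $K\ge K_0(R)$. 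Alternatively, and more robustly, I would follow Kostianko--Zelik \cite{KZ2017} and define $a$ through $\phi=\int_0^x P_K(\cdot)$, solving the fixed-point problem in the finite-dimensional coefficient space $P_K$. There the map is smooth, and a priori bounds come from the explicit exponential together with the $L^2$-contractivity of $P_K$. Whichever route works, the goal is a bound $\|a\|_{W^{1,\infty}}+\|a^{-1}\|_{W^{1,\infty}}\le C(R)$ that is independent of $K$, together with existence and uniqueness via Banach's fixed-point theorem on a ball in $W^{1,\infty}$, provided $K\ge K_0(R)$.

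For (ii), I would apply the implicit function theorem to
\begin{equation*}
	\Phi(a,v)=a-\exp\Big(-\tfrac12\int_0^x P_K(av)\Big)
\end{equation*}
on $W^{1,\infty}\times H^1$. The map $\Phi$ is $C^\infty$ because it is a composition of the bounded bilinear map $(a,v)\mapsto av$, bounded linear maps, and the exponential on the Banach algebra $W^{1,\infty}$. The operator $\partial_a\Phi$ is invertible by the same contraction estimate used above. Differentiating the identity $\Phi(a(v),v)=0$ then gives bounds on all derivatives of order at most $\mathbf m$ that are independent of $K$, since each factor of $P_K$ appearing in them is controlled by $L^2$- or $H^1$-contractivity. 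The Lipschitz estimate follows from the mean value theorem on the convex ball $B_{H^1}(0,R)$.

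Finally, (iii) follows from $\mathcal U(v)=a(v)v$ being the bounded bilinear product $W^{1,\infty}\times H^1\to H^1$ composed with the $C_b^{\mathbf m}$ map $v\mapsto(a(v),v)$. The Leibniz rule then gives all derivative bounds with constants independent of $K$.
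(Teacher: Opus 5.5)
The paper does not prove this lemma; it defers to Kostianko--Zelik. Your proposal has a genuine gap exactly at the step you flag as the main obstacle, and that gap cannot be closed for the statement as written. Your contraction estimates only work for small $R$. For large $R$ neither suggestion gives existence, uniqueness or a $K$-independent bound. The explicit exponential only yields $\|a\|_{L^\infty}\le\exp(\frac{\sqrt\pi}{2}\|v\|_{L^2}\|a\|_{L^\infty})$, which is no bound once $\|v\|_{L^2}$ is not small. So (i) is left open, and your (ii) inherits this, since invertibility of $\partial_a\Phi$ was also to come from the contraction.

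In fact no argument can give (i) on the whole ball. Take $v=-2\sin x\in H^1_0$. Then $1+\frac12\int_0^x v=\cos x$, so the limit problem $a'=-\frac12 v a^2$, $a(0)=1$, has the solution $1/\cos x$, which blows up at $x=\pi/2$. The non-positivity you noticed is the obstruction, not a technicality. Suppose that for $K\to\infty$ there were solutions $a_K$ (necessarily positive) with $\|a_K\|_{W^{1,\infty}}+\|a_K^{-1}\|_{W^{1,\infty}}\le C$. Arzel\`a--Ascoli would give $a_K\to a$ uniformly with $a\ge 1/C$. Since $\|P_K(a_Kv)-av\|_{L^2}\le\|a_K-a\|_{L^\infty}\|v\|_{L^2}+\|(I-P_K)(av)\|_{L^2}\to0$, you can pass to the limit in $a_K=\exp(-\frac12\int_0^xP_K(a_Kv))$. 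This gives a positive bounded solution of $a'=-\frac12va^2$, which would force $1/a=\cos x>0$ on $[0,\pi]$, a contradiction. So (i) fails for this $v$ whenever $R>\|v\|_{H^1}$, and perturbing in $(I-P_K)(av)$ around a limit problem that has no solution cannot work.

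What is true, and what the diffeomorphism statement right after the lemma needs, is the lemma for $v$ in the image of $B_{H^1}(0,R)$ under the explicit map $\mathcal V_K\colon u\mapsto e^{\frac12\int_0^xP_Ku}\,u$. This is the inverse transformation of Lemma \ref{Zelik lemma 2}, up to the sign convention used there. On this image you should not solve a fixed-point problem in $a$ at all. If $v=\mathcal V_K(u)$, then $a:=e^{-\frac12\int_0^xP_Ku}$ solves \eqref{The condition of a(v)} with $av=u$. It is bounded in $W^{1,\infty}$ uniformly in $K$ (with constants depending on $R$), because $\|\int_0^xP_Ku\|_{L^\infty}\le\sqrt\pi\|u\|_{L^2}$ and $\|P_Ku\|_{L^\infty}\le C\|u\|_{H^1_0}$.

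The real content is that $\mathcal V_K$ is injective on the ball with a Lipschitz inverse and $K$-uniform bounds. This holds because $\mathcal V_K$ is $O(K^{-1/2})$-close in $C^1$ to the Cole--Hopf-type map $\mathcal V_\infty\colon u\mapsto e^{\frac12\int_0^xu}u$; to see this, use $\|(I-P_K)w\|_{L^\infty}\le CK^{-1/2}\|w\|_{H^1_0}$. The inverse of $\mathcal V_\infty$, namely $v\mapsto v/(1+\frac12\int_0^xv)$, is explicit and Lipschitz on its image. Smoothness and $K$-uniform derivative bounds for $v\mapsto a(v)$ then follow from the inverse function theorem. The linearization there is a small perturbation of an invertible identity-plus-Volterra operator, not a contraction.

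A minor point: your bound $\|P_K(av)\|_{H^1_0}\le\|av\|_{H^1_0}$ needs $v\in H^1_0$. For $v\in H^1(0,\pi)$ with nonzero traces, which the statement allows, $P_K$ is not uniformly bounded on $H^1$.
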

	\par
	The inverse map $v = \mathcal{V}(u)$ is defined as $\mathcal{V}(u) = {b^{ - 1}}(u)u$, where ${b^{ - 1}} = {b^{ - 1}}(u)$ solves the linear ODE:
	\begin{equation*}
		\begin{array}{*{20}{c}}
			{\frac{d}{{dx}}{b^{ - 1}} = - \frac{1}{2}{P_K}(u){b^{ - 1}},}&{{{ {{b^{ - 1}}} |}_{x = 0}} = 1.}
		\end{array}
	\end{equation*}
	Moreover, $b = b(u)$ solves the equation
	\begin{equation}\label{The condition of b(u)}
		\begin{array}{*{20}{c}}
			{\frac{d}{{dx}}b = \frac{1}{2}{P_K}(u)b,}&{{{ b |}_{x = 0}} = 1.}
		\end{array}
	\end{equation}
	Since the above problem is linear, we can explicitly solve it and obtain
	\begin{equation}\label{Estimate the key to pta}
		b(u)(x) = {e^{\frac{1}{2}\int_0^x {{P_K}u(s)ds} }}.
	\end{equation}
	Therefore, we have the following Lemma.
	\begin{lemma}\label{Zelik lemma 2}
		For any given $K, \mathbf{m} \in {\mathbb{N}^ + }$, $R > 0$ and $u \in H^1(0,\pi)$, the following results hold:
		\begin{enumerate}[label=\textbf{\roman{enumi}}., leftmargin=0.75cm]
			\item Problem \ref{The condition of b(u)} admits a unique solution $b \in {W^{1,\infty }}(0,\pi ;\mathbb{R} \backslash \{ 0\} )$ with
			\begin{equation*}
				{\| b \|_{{W^{1,\infty }}}} + {\| {{b^{ - 1}}} \|_{{W^{1,\infty }}}} \le C,
			\end{equation*}
			where the constant $C$ is independent of $K$ and $u$.
			\item The maps $u \mapsto {b^{ - 1}}(u)$ and $u \mapsto b(u)$ belong to ${C^\infty }({H^1}(0,\pi ),{W^{1,\infty }}(0,\pi ;\mathbb{R} \backslash \{ 0\} ))$, with all derivatives bounded $($i.e., in $C_b^\mathbf{m}$$)$. In particular, the following estimate holds:
			\begin{equation*}
				\begin{array}{*{20}{c}}
					{{{\| {b({u_1}) - b({u_2})} \|}_{{W^{1,\infty }}}} + {{\| {{b^{ - 1}}({u_1}) - {b^{ - 1}}({u_2})} \|}_{{W^{1,\infty }}}} \le C{{\| {{u_1} - {u_2}} \|}_{{H^1}}},}&{u_1},{u_2} \in {B_{{H^1}}}(0,R),
				\end{array}
			\end{equation*}
			where the constant $C$ is independent of $K$.
			\item The inverse transformation $v = \mathcal{V}(u) := b^{-1}(u)u$ defines a ${C^\infty }$ map ${\mathcal V}:{B_{{H^1}}}(0,R) \to {H^1}$, with all derivatives bounded $($i.e., in $C_b^\mathbf{m}$$)$ and satisfying
			\begin{equation*}
				\begin{array}{*{20}{c}}
					{{{\| {{\mathcal V}({u_1}) - {\mathcal V}({u_2})} \|}_{{H^1}}} \le C{{\| {{u_1} - {u_2}} \|}_{{H^1}}},}&{u_1},{u_2} \in {B_{{H^1}}}(0,R),
				\end{array}
			\end{equation*}
			where constant $C$ is independent of $K$ and $u_i$. The bounds on the derivatives of all orders are independent of $K$.
		\end{enumerate}
	\end{lemma}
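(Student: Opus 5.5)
The plan is to use the explicit formula \eqref{Estimate the key to pta} throughout. Put $w(x):=\frac12\int_0^x P_K u(s)\,ds$. Then $b(u)=e^{w}$ and $b^{-1}(u)=e^{-w}$.

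First I would bound $w$. The map $u\mapsto P_K u$ is linear and bounded on $L^2$ with norm at most $1$, independently of $K$, since $P_K$ is an orthoprojector. So $u\mapsto w$ is linear and continuous from $H^1$ (indeed from $L^2$) into $W^{1,\infty}$. Its derivative is $\partial_x w=\frac12 P_K u$, and in one dimension
\[
\|P_Ku\|_{L^\infty}\le C\|P_Ku\|_{H^1}\le C\|u\|_{H^1}.
\]
Hence
\[
\|w\|_{W^{1,\infty}}\le C\|u\|_{H^1}\le CR,
\]
with $C$ independent of $K$. For part i., existence and uniqueness come from linear ODE theory. Nonvanishing is clear from the exponential form, and $\|e^{\pm w}\|_{W^{1,\infty}}\le e^{CR}(1+CR)$. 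So the constant is uniform in $K$; it depends on $R$, through the ball $u\in B_{H^1}(0,R)$, which is how the statement should be read.

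For part ii., I would write $b=\exp\circ\,w$, the composition of a bounded linear map $H^1\to W^{1,\infty}$ with the Nemytskii operator $\phi\mapsto e^{\pm\phi}$ on $W^{1,\infty}$. Since $W^{1,\infty}(0,\pi)$ is a Banach algebra, the exponential series converges there. So $\phi\mapsto e^{\phi}$ is real-analytic on $W^{1,\infty}$, with
\[
D^m e^{\phi}[h_1,\dots,h_m]=e^{\phi}h_1\cdots h_m .
\]
On bounded sets this gives bounds of the form $C_m e^{C\|\phi\|}$. Composing with the linear map $u\mapsto w$, the chain rule gives $C^\infty$ smoothness, with derivatives bounded on $B_{H^1}(0,R)$ uniformly in $K$. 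The Lipschitz estimate then follows from the mean value inequality on the convex ball, using the bound on the first derivative.

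For part iii., $\mathcal V(u)=b^{-1}(u)\,u$ is the product of a $C^\infty_b$ map into $W^{1,\infty}$ and the identity on $H^1$. Multiplication $W^{1,\infty}\times H^1\to H^1$ is a continuous bilinear map, by $\|fg\|_{H^1}\le C\|f\|_{W^{1,\infty}}\|g\|_{H^1}$. The Leibniz rule for bilinear maps then gives smoothness and uniform derivative bounds, and the mean value theorem gives the Lipschitz bound.

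The only delicate point is that every constant must be independent of $K$. This reduces entirely to the $K$-uniform bound $\|P_Ku\|_{L^\infty}\le C\|u\|_{H^1}$. Because $P_K$ is a spectral projector of the Dirichlet Laplacian, it is a contraction in $H^1_0$ as well as in $L^2$, and the Sobolev embedding then yields the bound.
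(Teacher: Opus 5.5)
\textbf{There is a gap at the one step you flag as delicate.} Your overall route matches what the paper relies on. The paper writes down the explicit formula $b(u)=e^{\frac12\int_0^x P_Ku\,ds}$ and defers to Kostianko--Zelik; you add the Banach-algebra calculus for $\phi\mapsto e^{\pm\phi}$ on $W^{1,\infty}$ and the product estimate $W^{1,\infty}\times H^1\to H^1$. You are also right that the constants in i.\ and the derivative bounds can only be uniform on $B_{H^1}(0,R)$, not on all of $H^1$. However, the lemma is stated for $u\in H^1(0,\pi)$. Your chain $\|P_Ku\|_{L^\infty}\le C\|P_Ku\|_{H^1}\le C\|u\|_{H^1}$ needs $P_K$ to be bounded on $H^1(0,\pi)$ uniformly in $K$. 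Your justification, that $P_K$ is a contraction on $H^1_0$, only covers $u\in H^1_0$.

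\textbf{The middle inequality fails off $H^1_0$.} Take $u\equiv 1$. Then $P_K1=\frac{4}{\pi}\sum_{k\ \mathrm{odd}\le K}\frac{\sin kx}{k}$, so $\|\partial_x P_K1\|_{L^2}^2=\frac{8}{\pi}\lceil K/2\rceil\to\infty$, i.e.\ $\|P_K1\|_{H^1}\sim K^{1/2}$. Everything $K$-uniform in your proof depends on this step. That includes the bound in i.\ and the $K$-independent Lipschitz constants and derivative bounds in ii.--iii., since these are all controlled by the norm of the linear map $u\mapsto \frac12\int_0^x P_Ku$ from $H^1$ to $W^{1,\infty}$. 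As written, you have proved the lemma only on $H^1_0$.

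\textbf{How to repair it.} The estimate you actually need, $\sup_K\|P_Ku\|_{L^\infty}\le C\|u\|_{H^1}$, is still true on all of $H^1(0,\pi)$, but it needs a separate argument. Write $u=u_0+\ell$ with $\ell(x)=u(0)+\frac{x}{\pi}\bigl(u(\pi)-u(0)\bigr)$, so that $u_0\in H^1_0$ and $\|u_0\|_{H^1}+|u(0)|+|u(\pi)|\le C\|u\|_{H^1}$. Treat $u_0$ by your contraction-plus-Sobolev argument. For $\ell$, use that the sine partial sums of $1$ and of $x$ on $(0,\pi)$ can be written through $\sum_{k\le n}\frac{\sin ky}{k}$. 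That sum is bounded uniformly in $n$ and $y$ (the Gibbs phenomenon), so $\sup_K\|P_K\ell\|_{L^\infty}\le C(|u(0)|+|u(\pi)|)$. Alternatively, state the lemma on $H^1_0$, which is all the paper uses, since the change of variables is only applied in the absorbing ball of $H^1_0$. With either fix, the rest of your argument goes through unchanged.
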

	For the proofs of the above two lemmas and more details, see \cite{KZ2017, KZ2018}.
	\par
	According to Lemmas \ref{Zelik lemma 1} and \ref{Zelik lemma 2}, for any integer $K \ge {K_0}(CR)$, the maps $\mathcal{V}$ and ${\mathcal{V}^{ - 1}} = \mathcal{U}$ establish a $C^\infty$-diffeomorphism between ${B_{{H^1}}}(0,R)$ and ${\mathcal V}({B_{{H^1}}}(0,R)) \subset {B_{{H^1}}}(0,CR) \subset {H^1}$, where the constant $C$ is independent of $K$. Note that the maps $\mathcal{U}$ and $\mathcal{V}$ act not only from $H^1(0,\pi)$ to $H^1(0,\pi)$, but also from $H^1_0(0,\pi)$ to $H^1_0(0,\pi)$.
	\par
	Our next task is to transform equation \eqref{main equation} into the analogous equation with respect to the new dependent variable $v=\mathcal{V}(u)$. To this end, we fix the radius $R$ in such a way that the set ${\mathcal B} = {B_{H_0^1}}(0,R)$ is an absorbing ball for the semigroup $S(t)$. Then, the maps $\mathcal{V}$ and $\mathcal{U}$ are $C^\infty$-diffeomorphisms between a neighborhood ${\mathcal B}$ of the attractor $\mathcal A$ and $\mathcal{V}({\mathcal B})$, so the change of variables $v = {\mathcal V}(u)$ is well-defined and one-to-one in this neighborhood. We now study the transformed equation, which has the form
	\begin{align}\label{Transformed Equation 1}
		{v_t} - {v_{xx}} &= [a(v)v - {P_K}(a(v)v)]{\partial _x}v + \{ [{a^{ - 1}}(v)\partial _x^2a(v) - {a^{ - 1}}(v){\partial _t}a(v) + v{\partial _x}a(v)]v + {a^{ - 1}}(v)g\} \nonumber\\
		&= :{I_1}(v) + {I_2}(v),
	\end{align}
	where the map $a$ as well as $I_i$ depends on a parameter $K$.
	\par
	In order to construct the smooth IM/IF, it is necessary to establish the boundedness of higher-order Fr\'{e}chet derivatives of $I_1$ and $I_2$. The boundedness of the first-order Fr\'{e}chet derivatives was obtained in \cite{KZ2017, KZ2018}. In this paper, we will prove that for any fixed $\mathbf{m} \in \mathbb{N}^+$, the Fr\'{e}chet derivatives $D^\mathbf{m} I_1$ and $D^\mathbf{m} I_2$ of these two nonlinear terms are bounded.
	\begin{proposition}\label{my proposition}
		For any $\mathbf{m} \in \mathbb{N}^ + $, let $\mathcal{R} > 0$ satisfy ${\mathcal V}(\mathcal{B}) \subset {B_{H_0^1}}(0, \mathcal{R})$ and $K \ge K_0(\mathcal{R})$. Then the maps ${I_1} \in C_b^{\mathbf{m}}({B_{H_0^1}}(0,{\mathcal R}),H)$ and ${I_2} \in C_b^{\mathbf{m}}({B_{H_0^1}}(0,{\mathcal R}),H_0^1)$ satisfy the following estimates:
		\begin{equation}\label{The boundary of derivatives}
			\begin{array}{*{20}{c}}
				{{{\| {I_1^{(i)}(v)} \|}_{{\mathcal L}({{(H_0^1)}^i},H)}} \le C{K^{ - 1/2}},}&{{{\| {I_2^{(i)}(v)} \|}_{{\mathcal L}({{(H_0^1)}^i},H_0^1)}} \le {C_K},}&{v \in {B_{H_0^1}}(0,{\mathcal R}),}
			\end{array}
		\end{equation}
		where ${i = 1, \cdot \cdot \cdot ,{\mathbf{m}}}$, the constant $C$ depends on $\mathbf{m}$ but is independent of $K$, while the constant $C_K$ depends on $\mathbf{m}$, $K$, and exhibits monotonic growth as $K$ increases.
	\end{proposition}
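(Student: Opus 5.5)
The plan is to write $I_1$ and $I_2$ as compositions and products of maps whose derivatives are already controlled by Lemmas \ref{Zelik lemma 1} and \ref{Zelik lemma 2}, together with a few elementary bounded multilinear operations. The higher Fréchet derivatives then follow from the Leibniz and chain (Faà di Bruno) rules. Throughout, $v\in B_{H_0^1}(0,\mathcal R)$ and $K\ge K_0(\mathcal R)$, so $u=\mathcal U(v)=a(v)v$ is a $C_b^{\mathbf m}$ map into $H_0^1$ with $K$-independent bounds (Lemma \ref{Zelik lemma 1}(iii)). Likewise $a(v),a^{-1}(v)$ are $C_b^{\mathbf m}$ into $W^{1,\infty}$ with $K$-independent bounds (Lemma \ref{Zelik lemma 1}(ii)).

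\textbf{Step 1: $I_1$.} Write $Q_K=I-P_K$, so that $I_1(v)=Q_K(\mathcal U(v))\,\partial_x v$. This is the bilinear map $B(w,z)=(Q_Kw)\,\partial_x z$ evaluated at $(\mathcal U(v),v)$. Hence
\[
I_1^{(i)}(v)[\xi_1,\dots,\xi_i]=\sum B\bigl(\mathcal U^{(j)}(v)[\xi_{\sigma}],\ \text{($v$ or some $\xi_k$)}\bigr),
\]
where the sum runs over the Leibniz splittings. The key estimate is
\[
\|(Q_Kw)\partial_x z\|_{L^2}\le \|Q_Kw\|_{L^\infty}\|z\|_{H^1},\qquad \|Q_Kw\|_{L^\infty}\le \sum_{k>K}|w_k|\le \Bigl(\sum_{k>K}k^{-2}\Bigr)^{1/2}\|w\|_{H_0^1}\le CK^{-1/2}\|w\|_{H_0^1},
\]
obtained from the sine expansion of $w\in H_0^1$ and Cauchy--Schwarz. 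So $\|B\|_{\mathcal L(H_0^1\times H_0^1,H)}\le CK^{-1/2}$. Combined with the $K$-independent bounds on $\mathcal U^{(j)}$ and on $\|v\|_{H^1}\le\mathcal R$, this gives $\|I_1^{(i)}\|\le CK^{-1/2}$ for all $i\le\mathbf m$.

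\textbf{Step 2: removing the time derivative in $I_2$.} The delicate term is $a^{-1}(v)\partial_t a(v)$, since it formally involves $v_t$. From $u=a v$ and $v=b^{-1}(u)u$ we get $a(v)=b(\mathcal U(v))$. Then \eqref{Estimate the key to pta} gives
\[
\partial_t a=a\cdot\tfrac12\int_0^x P_K u_t\,ds .
\]
Substituting the original equation $u_t=u_{xx}+\tfrac12\partial_x(u^2)+g$ yields
\[
a^{-1}\partial_t a=\Phi_K(\mathcal U(v)),\qquad \Phi_K(u)(x):=\tfrac12\int_0^x P_K\bigl(u_{xx}+\tfrac12\partial_x(u^2)+g\bigr)ds .
\]
Because $P_K$ has finite rank onto smooth eigenfunctions, $P_K\partial_{xx}$ and $P_K\partial_x$ act boundedly from $L^2$ (even $L^1$) into $W^{1,\infty}$ with norms $\le C_K$ (polynomial in $K$). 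Therefore $\Phi_K$ is a polynomial of degree $\le2$ from $H^1$ to $W^{1,\infty}$, with all derivatives bounded by $C_K$ on bounded sets. I expect this identification to be the main point of the proof. After it, $I_2$ contains no time derivative.

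\textbf{Step 3: remaining terms of $I_2$.} For the other coefficients, $\partial_x a=-\tfrac12P_K(\mathcal U(v))a$ and
\[
\partial_x^2a=-\tfrac12\partial_xP_K(\mathcal U(v))\,a-\tfrac12P_K(\mathcal U(v))\,\partial_xa .
\]
Each of these is a product of $C_b^{\mathbf m}$ maps into $W^{1,\infty}$, using $\|P_Kw\|_{W^{2,\infty}}\le C_K\|w\|_{H^1}$. Hence the coefficient $c(v):=a^{-1}\partial_x^2a-\Phi_K(\mathcal U(v))+v\,\partial_xa$ is $C_b^{\mathbf m}$ into $W^{1,\infty}$, where $v\in H^1\subset L^\infty$ and $H^1$ is an algebra. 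Then
\[
I_2(v)=c(v)\,v+a^{-1}(v)\,g .
\]
Since multiplication $W^{1,\infty}\times H_0^1\to H_0^1$ is bounded bilinear and $v,g\in H_0^1$, the boundary values vanish. The Leibniz rule then gives $I_2\in C_b^{\mathbf m}(B_{H_0^1}(0,\mathcal R),H_0^1)$ with bounds $C_K$. Every bound produced is a polynomial in $K$ with nonnegative coefficients, so replacing it by its supremum over $K'\le K$ yields a monotone $C_K$.
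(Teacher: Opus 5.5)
Your proposal is correct and follows essentially the paper's route. For $I_1$, the factor $K^{-1/2}$ comes from the $L^\infty$-smallness of $(I-P_K)$ applied to $\mathcal U(v)$ and its derivatives. You get this from the sine tail and Cauchy--Schwarz rather than the paper's interpolation plus Weyl bound, and writing $I_1(v)=B(\mathcal U(v),v)$ with $B$ bilinear makes the higher derivatives explicit where the paper only says ``analogously.'' For $I_2$, you remove $\partial_t a$ exactly as the paper does, via the explicit exponential formula and the Burgers equation for $P_K u_t$.

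There are two harmless slips. First, $v\,\partial_x a$ lies only in $H^1$, not $W^{1,\infty}$; this is enough, since $H^1\cdot H_0^1\subset H_0^1$. Second, the ODE for $a$ actually gives $a(v)=b^{-1}(\mathcal U(v))$, hence $a^{-1}\partial_t a=-\Phi_K(\mathcal U(v))$. This sign inconsistency is inherited from the paper and does not affect any bound.
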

	\par
	\begin{proof}
		Firstly, we start with the operator ${I_1}(v) = [a(v)v - {P_K}(a(v)v)]{\partial _x}v$. Clearly, since $v \in H_0^1(0,\pi )$ and $a( \cdot ) \cdot \in C_b^\mathbf{m} ({B_{H_0^1}}(0,{\mathcal R}),H_0^1)$, it follows that ${I_1} \in C_b^\mathbf{m}({B_{H_0^1}}(0,\mathcal{R}),H)$. Next, we will prove that the first estimate in inequality \eqref{The boundary of derivatives} holds.
		\par
		For $v \in {B_{H_0^1}}(0,{\mathcal R})$ and $\theta \in H_0^1(0,\pi )$, the first-order Fr\'{e}chet derivative ${I'_1}(v)[\theta ]$ is:
		\begin{equation}
			{I'_1}(v)[\theta ] = (I - {P_K})(a'(v)[\theta ]v + a(v)\theta ){\partial _x}v + (I - {P_K})(a(v)v){\partial _x}\theta.
		\end{equation}
		By taking norms, we obtain:
		\begin{multline*}
			{\| {{I'_1}(v)[\theta ]} \|_H} \le {\| {(I - {P_K})(a'(v)[\theta ]v){\partial _x}v} \|_H} \\
			+ {\| {(I - {P_K})(a(v)\theta ){\partial _x}v} \|_H} + {\| {(I - {P_K})(a(v)v){\partial _x}\theta } \|_H}.
		\end{multline*}
		Using the interpolation inequality, Weyl Lemma, and the fact that $H^1$ is an algebra, we have
		\begin{align*}
			{\| {{I'_1}(v)[\theta ]} \|_H} \le& \| {(I - {P_K})(a'(v)[\theta ]v)} \|_{{L^2}}^{1/2}\| {(I - {P_K})(a'(v)[\theta ]v)} \|_{H_0^1}^{1/2}{\| v \|_{H_0^1}} \\
			&+ \| {(I - {P_K})(a(v)\theta )} \|_{{L^2}}^{1/2}\| {(I - {P_K})(a(v)\theta )} \|_{H_0^1}^{1/2}{\| v \|_{H_0^1}} \\
			&+ \| {(I - {P_K})(a(v)v)} \|_{{L^2}}^{1/2}\| {(I - {P_K})(a(v)v)} \|_{H_0^1}^{1/2}{\| \theta \|_{H_0^1}} \\
			\le& C{K^{ - 1/2}}({\| {a'(v)[\theta ]v} \|_{H_0^1}}{\| v \|_{H_0^1}} + {\| {a(v)\theta } \|_{H_0^1}}{\| v \|_{H_0^1}} + {\| {a(v)v} \|_{H_0^1}}{\| \theta \|_{H_0^1}}) \\
			\le& C{K^{ - 1/2}}{\| \theta \|_{H_0^1}},
		\end{align*}
		where the constant $C$ is independent of $K$. Therefore, for the first-order Fr\'{e}chet derivative ${I'_1}(v)$, we obtain the following estimate: 
		\begin{equation}
			{\| {{I'_1}(v)} \|_{\mathcal{L}(H_0^1,H)}} \le C{K^{ - 1/2}}.
		\end{equation}
		\par
		Note that the boundedness of higher-order derivatives can be established through an analogous procedure: At each step, the derivative operation generates finite sums composed of $P_K$-projected products, which involve the nonlinear term $a(v)$, the solution variable $v$, and their respective derivatives. By employing the interpolation inequality in conjunction with the algebraic property of the $H^1$ space, we derive bounds controlling the growth of these derivatives. Therefore, for any $\mathbf{m} \in {\mathbb{N}^ + }$, ${\| {I_1^{(\mathbf{m})}(v)} \|_{{\mathcal L}({{(H_0^1)}^\mathbf{m}},H)}} \le C{K^{ - 1/2}}$, where the constant $C$ depends on $\mathbf{m}$ but is independent of $K$.
		\par
		Next, let's analyze ${I_2}(v) = [{a^{ - 1}}(v)\partial _x^2a(v) - {a^{ - 1}}(v){\partial _t}a(v) + v{\partial _x}a(v)]v + {a^{ - 1}}(v)g(x)$. Actually, operator $I_2$ involves two nontrivial terms $\partial _x^2a(v)$ and ${\partial _t}a(v)$, the other terms can be estimated by using Lemmas \ref{Zelik lemma 1} and \ref{Zelik lemma 2}. Let us first treat $\partial _x^2a(v)$. By differentiating the Problem \eqref{The condition of a(v)} in $x$, we obtain
		\begin{equation}
			\partial _x^2a(v) = \frac{1}{4}{P_K}{(a(v)v)^2}a - \frac{1}{2}{\partial _x}({P_K}(a(v)v))a.
		\end{equation}
		Specifically, the operator $P_K$ projects $a(v)v$ onto the finite-dimensional subspace spanned by smooth eigenfunctions $\{ {e_k}\} _{k = 1}^K$, i.e.,
		\begin{equation}
			{P_K}(a(v)v) = \sum\limits_{k = 1}^K {\langle {a(v)v,{e_k}} \rangle } {e_k}.
		\end{equation}
		Consequently, this ensures that $\partial_x^2 a(v)$ is well-defined as a $C^\infty$-map from $ B_{H_0^1}(0, \mathcal{R})$ to $H_0^1(0, \pi)$.
		\par
		To establish the boundedness property for the Fr\'{e}chet derivatives of all orders of $\partial_x^2 a$, it suffices to show that the corresponding higher-order derivatives of ${\partial _x}({P_K}(a(v)v))$ are bounded. For $v \in {B_{H_0^1}}(0,{\mathcal R})$, $\theta \in H_0^1(0,\pi )$, differentiating ${\partial _x}({P_K}(a(v)v))$ gives
		\begin{equation}
			D({\partial _x}({P_K}(a(v)v)))[\theta ] = {\partial _x}{P_K}(a(v)\theta ) + {\partial _x}{P_K}(a'(v)[\theta ]v).
		\end{equation}
		Since
		\begin{multline*}
			{\| {{\partial _x}({P_K}(a(v)v))} \|_{H_0^1}} \le \sum\limits_{k = 1}^K {| {\langle {a(v)v,{e_k}} \rangle } |{{\| {{\partial _x}({e_k})} \|}_{{H^1}}}} \le \sum\limits_{k = 1}^K {{{\| {a(v)v} \|}_{{L^2}}}{{\| {{e_k}} \|}_{{L^2}}}{{\| {{\partial _x}({e_k})} \|}_{{H^1}}}} \\
			\le {\| {a(v)} \|_{{H^1}}}{\| v \|_{H_0^1}}\sum\limits_{k = 1}^K {{{\| {{\partial _x}({e_k})} \|}_{{H^1}}}} \le C\sum\limits_{k = 1}^K {k\sqrt {1 + {k^2}} } {\| {a(v)} \|_{{H^1}}}{\| v \|_{H_0^1}},
		\end{multline*}
		where constant $C$ is independent of $K$, we have
		\begin{equation}\label{The boundary of partial-x (P_K (a (v) v))}
			{\| {D({\partial _x}({P_K}(a(v)v)))[\theta ]} \|_{{H^1}}} \le {C_K}({\| {a(v)} \|_{{H^1}}}{\| \theta \|_{H_0^1}} + {\| \theta \|_{H_0^1}}{\| v \|_{H_0^1}}) \le {C_K}{\| \theta \|_{H_0^1}},
		\end{equation}
		where constant $C_K$ depends on $K$, $\mathcal{R}$, and exhibits monotonic growth as $K$ increases.
		\par
		The boundedness of higher-order derivatives for the operator ${\partial _{xx}}a(v)$ can be established through analogous methods: Each derivative introduces finite sums of terms involving $P_K$-projected products of $a(v)$, $v$, and their derivatives. The smoothness of the eigenfunctions $\{ e_k \}$, combined with the bounds on $a(v)$ and its derivatives, ensures that all such terms remain controlled. Thus, for any given $\mathbf{m} \in {\mathbb{N}^ + }$, $\partial _x^2a \in C_b^{\mathbf{m}}({B_{H_0^1}}(0,{\mathcal R}),H_0^1)$. The bounds on the derivatives of each order depend on $\mathbf{m}$, $K$, and grow with increasing $K$.
		\par
		Let us now treat the term ${\partial _t}a$. To rigorously define the term ${\partial _t}a$, we need to use the chain rule in order to find an expression for it. To do this, we derive the value ${\partial _t}{P_K}u(v)$ from equation \eqref{main equation}:
		\begin{equation}\label{Definition of partial _t{P_K}u(v)}
			{P_K}{\partial _t}u(v) = {P_K}\partial _x^2{\mathcal U}(v) + {P_K}({\mathcal U}(v){\partial _x}({\mathcal U}(v))) + {P_K}(g(x)),
		\end{equation}
		which expands explicitly into eigenfunction components:
		\begin{equation}
			{P_K}{\partial _t}u(v) = \sum\limits_{k = 1}^K {\langle {{\mathcal U}(v),\partial _x^2{e_k}} \rangle {e_k} + \sum\limits_{k = 1}^K {\langle {{\mathcal U}(v){\partial _x}{\mathcal U}(v),{e_k}} \rangle {e_k}} + \sum\limits_{k = 1}^K {\langle {g(x),{e_k}} \rangle {e_k}} } .
		\end{equation}
		Here, the map ${\mathcal U} \in C_b^\mathbf{m} ({B_{H_0^1}}(0,{\mathcal R}),H_0^1)$, $g \in H_0^1(0,\pi )$, and the eigenfunctions $\{ {e_k}\} $ are smooth. Consequently, the right-hand side is smoothly expressible in terms of $v$. It follows that the map $v \to {P_K}{\partial _t}u(v)$ is well-defined and lies in $C_b^\mathbf{m}({B_{H_0^1}}(0,{\mathcal R}),H_0^1)$ for any fixed $\mathbf{m} \in {\mathbb{N}^ + }$.
		\par
		To define ${\partial _t}a(v)(x)$, we differentiate the explicit formula \eqref{Estimate the key to pta} in time, which yields:
		\begin{equation}
			{\partial _t}a(v)(x) = \frac{1}{2}b(u)(x)\int_0^x {{P_K}{\partial _t}u(v)(s)} ds,
		\end{equation}
		where $u={\mathcal U}(v)$ and ${\partial _t}{P_K}u(v)$ is defined by the equation \eqref{Definition of partial _t{P_K}u(v)}. This shows that the map $v \to {\partial _t}a(v)$ is well-defined, and for every given positive integer $\mathbf{m}$, ${\partial _t}a \in C_b^\mathbf{m}({B_{H_0^1}}(0,{\mathcal R}),H^1)$.
		\par
		Since for any given $\mathbf{m} \in {\mathbb{N}^ + }$, all terms in ${I_2}$ belong to the $C_b^\mathbf{m}({B_{H_0^1}}(0,{\mathcal R}),{H^1})$, by applying the estimate \eqref{The boundary of partial-x (P_K (a (v) v))}, we derive that ${I_2} \in C_b^\mathbf{m}({B_{H_0^1}}(0,{\mathcal R}),H_0^1)$ and satisfies
		\begin{equation*}
			\begin{array}{*{20}{c}}
				{{{\| {I_2^{(\mathbf{m})}(v)} \|}_{{\mathcal L}({{(H_0^1)}^\mathbf{m}},H_0^1)}} \le C_K,}&{v \in {B_{H_0^1}}(0,{\mathcal R}),}
			\end{array}
		\end{equation*}
		where the constant $C_K$ depends on $\mathbf{m}$, $K$ and $\mathcal{R}$, which exhibits monotonic growth as $K$ increases.
	\end{proof}
	\par
	As shown previously, the Burgers equation \eqref{main equation} is equivalent to equation \eqref{Transformed Equation 1} at least in the neighborhood of the set $\mathcal B$. For this reason, we can instead construct an IM for equation \eqref{Transformed Equation 1}. However, the nonlinear terms $I_1$ and $I_2$ are not globally defined on $H_0^1$. To overcome this problem, we need, as usual, to cut-off the nonlinear terms outside of a ball.
	\par
	Then, we introduce a smooth cut-off function $\varphi \in C_0^\infty (\mathbb{R})$ such that
	\begin{equation}
		\left\{ {\begin{array}{*{20}{c}}
				{\varphi (z) \equiv 1,}&{z \le {r^2},}\\
				{\varphi (z) = 0,}&{z \ge {{\mathcal R}^2},}
		\end{array}} \right.
	\end{equation}
	where $r$ is such that ${\mathcal V}({\mathcal B}) \subset {B_{H_0^1}}(0,r)$, $r < {\mathcal R}$. Furthermore, for any $K\ge K_0(\mathcal{R})$ the inverse map ${\mathcal U} = {\mathcal U}(v)$ is a diffeomorphism on ${B_{H_0^1}}(0,{\mathcal R})$. Finally, we modify equation \eqref{Transformed Equation 1} as follows:
	\begin{equation}\label{Transformed Equation 2}
		{v_t} - {v_{xx}} = \varphi (\| v \|_{{H^1}}^2){I_1}(v) + \varphi (\| v \|_{{H^1}}^2){I_2}(v) = :{{{\mathcal I}_1}}(v) + {{{\mathcal I}_2}}(v).
	\end{equation}
	\par
	Utilizing the Proposition \ref{my proposition}, we obtain the following lemma.
	\begin{lemma}\label{Smoothness of the nonlinear terms}
		For any given $\mathbf{m} \in {\mathbb{N}^ + }$, $K \ge {K_0}(\mathcal R)$, the maps ${{{\mathcal I}_1}} \in C_b^\mathbf{m}(H_0^1,H)$ and ${{{\mathcal I}_2}} \in C_b^\mathbf{m}(H_0^1,H_0^1)$. In particular,
		\begin{equation*}
			\begin{array}{*{20}{c}}
				{{{\| {{{\mathcal I}'_1}(v)} \|}_{{\mathcal L}(H_0^1,H)}} \le C{K^{ - 1/2}} = :{L_1},}&{{{\| {{{\mathcal I}'_2}(v)} \|}_{{\mathcal L}(H_0^1,H_0^1)}} \le {C_K} = :{L_2},}
			\end{array}
		\end{equation*}
		where the constant $C$ depends on $\mathbf{m}$ but is independent of $K$, while the constant $C_K$ depends on $\mathbf{m}$ and $K$, and exhibits monotonic growth as $K$ increases.
	\end{lemma}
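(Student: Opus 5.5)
The plan is to treat $\mathcal{I}_k(v)=\varphi(\|v\|_{H^1}^2)I_k(v)$, $k=1,2$, as products of a scalar cut-off factor with the maps $I_1$, $I_2$ controlled in Proposition \ref{my proposition}, and to differentiate them with the Leibniz rule. First, the scalar map $\Phi(v):=\varphi(\|v\|_{H^1}^2)$ is $C^\infty$ on $H_0^1$. Indeed, $v\mapsto \|v\|_{H^1}^2$ is a continuous quadratic form, so its derivatives are $2\langle v,\theta\rangle_{H^1}$, $2\langle \theta_1,\theta_2\rangle_{H^1}$, and zero from order three on. Composing with $\varphi\in C_0^\infty(\mathbb{R})$ via the Fa\`a di Bruno formula shows that $\Phi^{(j)}(v)$ is a finite sum of terms $\varphi^{(l)}(\|v\|^2)$ times multilinear expressions in $v$ and the $\theta$'s. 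Since $\varphi^{(l)}$ vanishes for $\|v\|\ge\mathcal{R}$, every such term is bounded by a constant depending only on $j$, $\varphi$ and $\mathcal{R}$, uniformly in $v\in H_0^1$ and independently of $K$.

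Second, I split $H_0^1$ into the open ball $B_{H_0^1}(0,\mathcal{R})$ and its complement. On the complement, $\mathcal{I}_k$ vanishes identically on a neighbourhood of each point with $\|v\|>\mathcal{R}$. To avoid the boundary sphere I use the support of $\varphi$, which is compact inside $(-\infty,\mathcal{R}^2)$, so that $\Phi$ already vanishes on the annulus $\mathcal{R}_1\le\|v\|\le\mathcal{R}$ for some $\mathcal{R}_1<\mathcal{R}$. If needed, I shrink slightly and apply Proposition \ref{my proposition} on a ball of radius $\mathcal{R}'$ with $K\ge K_0(\mathcal{R}')$. Either way, $\mathcal{I}_k$ is smooth on all of $H_0^1$, and its derivatives are supported where $I_k$ is defined with bounded derivatives.

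Third, inside the ball I use the Leibniz formula
\[
\mathcal{I}_k^{(i)}(v)[\theta_1,\dots,\theta_i]=\sum_{S\subset\{1,\dots,i\}}\Phi^{(|S|)}(v)[\theta_S]\,I_k^{(i-|S|)}(v)[\theta_{S^c}].
\]
Together with Proposition \ref{my proposition}, this gives bounds on all derivatives up to order $\mathbf{m}$ in $\mathcal{L}((H_0^1)^i,H)$ for $k=1$ and in $\mathcal{L}((H_0^1)^i,H_0^1)$ for $k=2$. This proves $\mathcal{I}_1\in C_b^{\mathbf{m}}(H_0^1,H)$ and $\mathcal{I}_2\in C_b^{\mathbf{m}}(H_0^1,H_0^1)$.

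For the first-derivative bounds, $\mathcal{I}_1'(v)=\Phi(v)I_1'(v)+\Phi'(v)[\cdot]\,I_1(v)$. The first term is $\le CK^{-1/2}$ by \eqref{The boundary of derivatives}. The second term requires the zeroth-order estimate $\|I_1(v)\|_H\le CK^{-1/2}$, which is the step I expect to be the real obstacle, since the proposition only states bounds for $i\ge1$. I would prove it by the same interpolation argument used for $I_1'$:
\[
\|(I-P_K)(a(v)v)\,\partial_x v\|_H\le \|(I-P_K)(a(v)v)\|_{L^\infty}\|v\|_{H^1}\le CK^{-1/2}\|a(v)v\|_{H^1}\|v\|_{H^1}\le CK^{-1/2}.
\]
This uses $\|w\|_{L^\infty}\le C\|w\|_{L^2}^{1/2}\|w\|_{H^1}^{1/2}$, $\|(I-P_K)w\|_{L^2}\le CK^{-1}\|w\|_{H^1}$, and Lemma \ref{Zelik lemma 1}. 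Alternatively, since $I_1(0)=0$, it follows from the mean value theorem and the derivative bound. Then $\|\mathcal{I}_1'\|\le CK^{-1/2}=:L_1$ with $C$ independent of $K$. For $\mathcal{I}_2$, the analogous bound $\|I_2(v)\|_{H_0^1}\le C_K$ follows from boundedness on the ball as in the proof of Proposition \ref{my proposition}, giving $L_2:=C_K$. This constant is monotone in $K$ after replacing $C_K$ by $\max_{K'\le K}C_{K'}$ if necessary.
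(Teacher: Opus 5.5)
Your proposal is correct and takes the same route as the paper. The paper gives no written proof: it only says the lemma follows from Proposition \ref{my proposition} once $I_1$ and $I_2$ are multiplied by the cut-off $\varphi(\|v\|_{H^1}^2)$, and your Leibniz-rule argument is the verification it omits. You also rightly note that the term $\Phi'(v)[\cdot]\,I_1(v)$ needs the zeroth-order bound $\|I_1(v)\|_H\le CK^{-1/2}$. You obtain it either by interpolation or from $I_1(0)=0$ and the mean value theorem on the convex ball, which fills a step the paper leaves implicit.
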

	\par
	\subsection{The construction of IM}
	\noindent
	\par
	We now construct the desired IM for equation \eqref{Transformed Equation 2}. Let us recall the definition of an IM.
	\begin{definition}\label{Definition of IM}
		The manifold ${\mathcal M} \subset H_0^1 $ is called an IM for equation \eqref{Transformed Equation 2} if the following conditions are satisfied:
		\begin{enumerate}[label=\textbf{\roman{enumi}}., leftmargin=0.75cm]
			\item The manifold $\mathcal M$ is invariant with respect to the solution semigroup $\widetilde S(t)$ of equation \eqref{Transformed Equation 2}, i.e., $\widetilde S(t){\mathcal M} = {\mathcal M}$.
			\item It can be presented as a graph of a continuous function $M:{P_N}(H_0^1) \to {Q_N}(H_0^1)$:
			\begin{equation*}
				{\mathcal M}: = \{ {v_N} + M({v_N}),{v_N} \in {P_N}(H_0^1)\}.
			\end{equation*}
			\item The exponential tracking property holds, i.e., there exist positive constants $C$ and $\alpha$ such that for every ${v_0} \in H_0^1$ and $t \ge 0$, there is ${\widetilde v_0} \in {\mathcal M}$ satisfying
			\begin{equation*}
				\begin{array}{*{20}{c}}
					{{{\| {\widetilde S(t){v_0} - \widetilde S(t){{\widetilde v}_0}} \|}_{H_0^1}} \le C{e^{ - \alpha t}}{{\| {{v_0} - {{\widetilde v}_0}} \|}_{H_0^1}},}&{t \ge 0.}
				\end{array}
			\end{equation*}
		\end{enumerate}
	\end{definition}
	\par
	We will use the Perron method to construct the IM for equation \eqref{Transformed Equation 2}. Following the Perron method, the desired manifold is found by solving the backward in time boundary value problem
	\begin{equation}\label{Perron method equation 1}
		\begin{array}{*{20}{c}}
			{{\partial _t}v - {\partial _{xx}}v = {{{\mathcal I}_1}}(v) + {{{\mathcal I}_2}}(v),}&{{P_N}v{|_{t = 0}} = p,}&{t \le 0}
		\end{array}
	\end{equation}
	in the weighted space $L_\theta ^2({\mathbb{R}_ - },H_0^1)$ with the norm $\| v \|_{L_\theta ^2({\mathbb{R}_ - },H_0^1)}^2: = \int_{ - \infty }^0 {{e^{2\theta t}}\| {v(t)} \|_{H_0^1}^2dt} $ for some properly chosen $\theta : = \frac{{{\lambda _{N + 1}} + {\lambda _N}}}{2}$. The solution of this equation is usually constructed using the Banach Contraction Theorem, and the desired map $M:{P_N}(H_0^1) \to {Q_N}(H_0^1)$ is defined via
	\begin{equation}\label{Definition of M}
		\begin{array}{*{20}{c}}
			{V(p,0) = {v_0},}&{M(p): = {Q_N}V(p,0).}
		\end{array}
	\end{equation}
	\par
	In order to solve equation \eqref{Perron method equation 1}, we will utilize the following two lemmas.
	\par
	\begin{lemma}\label{Zelik lemma 3} Assume $\theta \in ( {{\lambda _N},{\lambda _{N + 1}}} )$ and consider the equation:
		\begin{equation}\label{Perron method equation 2}
			\begin{array}{*{20}{c}}
				{{\partial _t}v - {\partial _{xx}}v = {h_1}(t) + {h_2}(t),}&{t \in \mathbb{R},}
			\end{array}
		\end{equation}
		where ${{h_1} \in L_\theta ^2(\mathbb{R},H)}$ and ${{h_2} \in L_\theta ^2(\mathbb{R},H_0^1)}$. The following results hold:
		\begin{enumerate}[label=\textbf{\roman{enumi}}., leftmargin=0.75cm]
			\item Equation \eqref{Perron method equation 2} admits a unique solution $v \in L_\theta ^2( {\mathbb{R},H_0^1 } ) \cap {C_\theta }( {\mathbb{R},H_0^1 } )$, where $v: = {{\mathcal T}_\theta } \circ (h_1+h_2)$. The linear operator ${{\mathcal T}_\theta }$ satisfies:
			\begin{equation*}
				\begin{array}{*{20}{c}}
					{{{\| {{{\mathcal T}_\theta }} \|}_{{\mathcal L}(L_\theta ^2(\mathbb{R},H),L_\theta ^2(\mathbb{R},H_0^1))}} \le \frac{{\lambda _{N + 1}^{1/2}}}{{\min \{ \theta - {\lambda _N},{\lambda _{N + 1}} - \theta \} }},}&{{{\| {{{\mathcal T}_\theta }} \|}_{{\mathcal L}(L_\theta ^2(\mathbb{R},H_0^1),L_\theta ^2(\mathbb{R},H_0^1))}} \le \frac{1}{{\min \{ \theta - {\lambda _N},{\lambda _{N + 1}} - \theta \} }}.}
				\end{array}
			\end{equation*}
			\item For the Fourier modes ${v_n}$ of the solution $v$, the following decomposition and bounds hold:
			\begin{equation}\label{Operator norm}
				\begin{array}{*{20}{c}}
					{{v_n} = {{\mathcal T}_{\theta ,n}} \circ {h_{1,n}} + {{\mathcal T}_{\theta ,n}} \circ {h_{2,n}},}\\
					{{{\| {{v_n}} \|}_{L_\theta ^2( \mathbb{R} )}} \le \frac{{\lambda _{N + 1}^{1/2}}}{{\min \{ {\theta - {\lambda _N},{\lambda _{N + 1}} - \theta } \}}}{{\| {{h_{1,n}}} \|}_{L_\theta ^2( \mathbb{R} )}} + \frac{1}{{\min \{ {\theta - {\lambda _N},{\lambda _{N + 1}} - \theta } \}}}{{\| {{h_{2,n}}} \|}_{L_\theta ^2( \mathbb{R} )}},}
				\end{array}
			\end{equation}
			where $v = \sum\nolimits_{n = 1}^\infty {{v_n}{e_n}} $, ${h_1} = \sum\nolimits_{n = 1}^\infty {{h_{1,n}}{e_n}} $, ${h_2} = \sum\nolimits_{n = 1}^\infty {{h_{2,n}}{e_n}} $ and ${{\mathcal T}_{\theta ,n}}$ denotes the solution operator for the problem ${v'_n} + ({\lambda _n} - \theta ){v_n} = {h_{1,n}} + {h_{2,n}}$ in the weighted space.
		\end{enumerate}
	\end{lemma}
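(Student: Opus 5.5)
The plan is to reduce to a statement about each scalar Fourier mode, prove it by a Fourier transform in time, and reassemble using orthogonality.

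\emph{Reduction to modes.} Substitute $w(t)=e^{\theta t}v(t)$, and similarly $\widetilde h_j=e^{\theta t}h_j$. The weighted problem then becomes an unweighted one:
\begin{equation*}
w'+(A-\theta)w=\widetilde h_1+\widetilde h_2 .
\end{equation*}
Here $A=-\partial_{xx}$, and $\widetilde h_j$ lies in the unweighted $L^2(\mathbb{R},\cdot)$ spaces. Expanding in the eigenbasis $\{e_n\}$ decouples this into the scalar equations
\begin{equation*}
w_n'+(\lambda_n-\theta)w_n=\widetilde h_{1,n}+\widetilde h_{2,n},\qquad n\ge1 .
\end{equation*}
Since $\theta\in(\lambda_N,\lambda_{N+1})$, we have $\lambda_n-\theta\neq0$ for every $n$, so the scalar operator $\frac{d}{dt}+(\lambda_n-\theta)$ is invertible on $L^2(\mathbb{R})$.

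\emph{Explicit inverse.} Fourier transform in $t$ gives $\widehat w_n(\xi)=(i\xi+\lambda_n-\theta)^{-1}\widehat{\widetilde h}_n(\xi)$. The multiplier has modulus at most $|\lambda_n-\theta|^{-1}$, which is at most $1/\min\{\theta-\lambda_N,\lambda_{N+1}-\theta\}$. Equivalently, the solution is given by a convolution kernel:
\begin{itemize}
\item for $n\le N$, the kernel runs backward in time;
\item for $n>N$, the kernel runs forward in time.
\end{itemize}
This defines $\mathcal T_{\theta,n}$, and Plancherel gives the bound for the $h_2$-part of (\ref{Operator norm}).

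\emph{The $h_1$-part.} Here we must gain the $H_0^1$-norm from $H$-data. Measured in $H_0^1$, the $n$-th mode carries an extra weight $\lambda_n^{1/2}$, so the relevant quantity is
\begin{equation*}
\sup_n \frac{\lambda_n^{1/2}}{|\lambda_n-\theta|}.
\end{equation*}
We bound it in two ranges.
\begin{itemize}
\item For $n\le N$: $\lambda_n^{1/2}\le\lambda_N^{1/2}\le\lambda_{N+1}^{1/2}$ and $|\lambda_n-\theta|\ge\theta-\lambda_N$.
\item For $n\ge N+1$: the function $\lambda\mapsto\lambda^{1/2}/(\lambda-\theta)$ is decreasing for $\lambda>\theta$, so its supremum is attained at $\lambda_{N+1}$.
\end{itemize}
Both cases give the bound $\lambda_{N+1}^{1/2}/\min\{\theta-\lambda_N,\lambda_{N+1}-\theta\}$, and this yields the first estimate in (ii). This monotonicity check is the only non-routine step, and it is the main point to verify.

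\emph{Assembling.} Summing over $n$ with orthogonality gives (i):
\begin{itemize}
\item the $H_0^1$-norm is $\sum_n\lambda_n|w_n|^2$ up to normalization;
\item the $H$-norm of the data is $\sum_n|h_n|^2$.
\end{itemize}
Uniqueness holds in the same way: a solution of the homogeneous equation has $\widehat w_n=0$ for all $n$.

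For the continuity $v\in C_\theta(\mathbb{R},H_0^1)$ we use the standard maximal regularity argument. The solution satisfies $w\in L^2(H^2)\cap H^1(L^2)$ when the data is in $L^2(H)$, and $w\in L^2(H^2)\cap H^1(H)$ via the $H_0^1$-data. Either way $w\in C_b(\mathbb{R},H_0^1)$ by the interpolation (Lions–Magenes) embedding. The same mode-wise kernel bounds give this via a sup-in-time estimate.

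Finally, linearity of $\mathcal T_\theta$ gives the decomposition $v_n=\mathcal T_{\theta,n}h_{1,n}+\mathcal T_{\theta,n}h_{2,n}$, and the triangle inequality produces the stated bound in (ii).
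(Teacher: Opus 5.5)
Your proposal is correct and follows essentially the route the paper indicates: decompose in the eigenbasis, solve the scalar ODE $v_n'+(\lambda_n-\theta)v_n=h_{1,n}+h_{2,n}$ in the weighted space, and reassemble by orthogonality. Your multiplier bound $|i\xi+\lambda_n-\theta|^{-1}\le|\lambda_n-\theta|^{-1}$, together with the monotonicity of $\lambda^{1/2}/(\lambda-\theta)$ on $(\theta,\infty)$, correctly supplies the constant $\lambda_{N+1}^{1/2}/\min\{\theta-\lambda_N,\lambda_{N+1}-\theta\}$; the paper leaves this detail to the cited references.
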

	\par
	\begin{lemma}\label{Zelik lemma 4} Let $\theta \in ( {{\lambda _N},{\lambda _{N + 1}}} )$, for any $p \in {P_N}(H_0^1)$, ${h_1} \in L_\theta ^2({\mathbb{R}_ - },H)$ and ${h_2} \in L_\theta ^2({\mathbb{R}_ - },H_0^1)$, the problem
		\begin{equation}\label{Perron method equation 3}
			\begin{array}{*{20}{c}}
				{{\partial _t}v - {\partial _{xx}}v = {h_1}(t) + {h_2}(t),}&{{P_N}v{|_{t = 0}} = p,}&{t \le 0,}
			\end{array}
		\end{equation}
		possesses a unique solution $v \in L_\theta ^2({\mathbb{R}_ - },H_0^1) \cap {C_\theta }({\mathbb{R}_ - },H_0^1)$. The solution can be expressed as
		\begin{equation*}
			{v = {{\mathcal T}_\theta } \circ ({{\widetilde h}_1} + {{\widetilde h}_2}) + {\mathcal H}p},
		\end{equation*}
		where $\widetilde h(t)$ denotes the zero extension of $h$ on $\mathbb{R}_+$ and ${\mathcal H}:{P_N}(H_0^1) \to L_\theta ^2({\mathbb{R}_ - },H_0^1)$ is the solution operator for the homogeneous problem, given by ${\mathcal H}p: = \sum_{n = 1}^N {{e^{ - {\lambda _n}t}}( {p,{e_n}} ){e_n}}$. Furthermore the solution $v$ satisfies the following estimate:
		\begin{equation*}
			\begin{array}{*{20}{c}}
				{{{\| v \|}_{L_\theta ^2({\mathbb{R}_ - },H_0^1)}} \le {C_\theta }({{\| {{h_1}} \|}_{L_\theta ^2({\mathbb{R}_ - },H)}} + {{\| {{h_2}} \|}_{L_\theta ^2({\mathbb{R}_ - },H_0^1)}} + {{\| p \|}_{H_0^1}}),}\\
				{{{\| v \|}_{{C_\theta }({\mathbb{R}_ - },H_0^1)}} \le {C_\theta }({{\| {{h_1}} \|}_{L_\theta ^2({\mathbb{R}_ - },H)}} + {{\| {{h_2}} \|}_{L_\theta ^2({\mathbb{R}_ - },H_0^1)}} + {{\| p \|}_{H_0^1}}),}
			\end{array}
		\end{equation*}
		where the constant ${C_\theta }$ is independent of ${h_1}$, ${h_2}$, $p$ and the weighted space of continuous functions is defined by ${\| u \|_{{C_\theta }(\mathbb{R},H_0^1)}}: = \mathop {\max }\limits_{t \in \mathbb{R}} \{ {e^{\theta t}}{\| {u(t)} \|_{H_0^1}}\} $.
	\end{lemma}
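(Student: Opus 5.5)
The plan is to reduce the half-line problem \eqref{Perron method equation 3} to the whole-line problem of Lemma \ref{Zelik lemma 3} by zero extension, and then to correct the value of the $P_N$-component at $t=0$ with the homogeneous solution $\mathcal Hp$. I would work mode by mode, with $v=\sum_n v_n e_n$ and $h_j=\sum_n h_{j,n}e_n$, throughout.

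\textbf{Existence and the formula.} Extend $h_1,h_2$ by zero to $t>0$, so that $\widetilde h_1\in L^2_\theta(\mathbb R,H)$ and $\widetilde h_2\in L^2_\theta(\mathbb R,H_0^1)$ with the same norms. Lemma \ref{Zelik lemma 3} gives $w:={\mathcal T}_\theta\circ(\widetilde h_1+\widetilde h_2)$, which solves the equation on $\mathbb R$, and hence on $\mathbb R_-$.

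The key observation concerns the modes $n\le N$. Since $\lambda_n<\theta$, the unique solution of $v_n'+(\lambda_n-\theta)v_n=\widetilde h_n$ in the weighted space is obtained by integrating from $+\infty$:
\begin{equation*}
e^{\theta t}w_n(t)=-\int_t^{\infty}e^{-(\lambda_n-\theta)(t-s)}e^{\theta s}\widetilde h_n(s)\,ds .
\end{equation*}
Because $\widetilde h_n\equiv 0$ on $(0,\infty)$, this gives $w_n(0)=0$, that is, $P_Nw(0)=0$.

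Next, $\mathcal Hp=\sum_{n\le N}e^{-\lambda_n t}(p,e_n)e_n$ solves the homogeneous equation and satisfies $P_N\mathcal Hp(0)=p$. Moreover $e^{\theta t}e^{-\lambda_n t}=e^{(\theta-\lambda_n)t}$ decays as $t\to-\infty$, so
\begin{equation*}
\|\mathcal Hp\|_{L^2_\theta(\mathbb R_-,H_0^1)}+\|\mathcal Hp\|_{C_\theta(\mathbb R_-,H_0^1)}\le C_\theta\|p\|_{H_0^1}.
\end{equation*}
Hence $v=w+\mathcal Hp$ solves \eqref{Perron method equation 3}.

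\textbf{Uniqueness.} Let $z$ be the difference of two solutions in $L^2_\theta(\mathbb R_-,H_0^1)$. Then $z$ solves the homogeneous equation with $P_Nz(0)=0$.
\begin{itemize}
\item For $n\le N$, we have $z_n(t)=e^{-\lambda_n t}z_n(0)=0$.
\item For $n>N$, we have $z_n(t)=e^{-\lambda_n(t-s)}z_n(s)$ for $s<t\le0$, so
\begin{equation*}
e^{\theta t}|z_n(t)|=e^{-(\lambda_n-\theta)(t-s)}e^{\theta s}|z_n(s)|.
\end{equation*}
Since $\lambda_n>\theta$ and $e^{\theta s}z_n(s)\in L^2(\mathbb R_-)$, letting $s\to-\infty$ along a sequence with $e^{\theta s}|z_n(s)|$ bounded forces $z_n\equiv0$.
\end{itemize}

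\textbf{Estimates.} The $L^2_\theta$ bound follows directly from Lemma \ref{Zelik lemma 3}(i) together with the bound on $\mathcal Hp$ above.

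For the $C_\theta$ bound I would use the Duhamel formula mode by mode and the Cauchy--Schwarz inequality.
\begin{itemize}
\item For $n>N$,
\begin{equation*}
e^{\theta t}w_n(t)=\int_{-\infty}^t e^{-(\lambda_n-\theta)(t-s)}e^{\theta s}\widetilde h_n(s)\,ds,
\end{equation*}
so $e^{2\theta t}|w_n(t)|^2\le \frac{1}{2(\lambda_n-\theta)}\|\widetilde h_n\|^2_{L^2_\theta}$.
\item For $n\le N$, the integral over $(t,\infty)$ gives the analogous bound with $\theta-\lambda_n$ in place of $\lambda_n-\theta$.
\end{itemize}

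Multiplying by $\lambda_n$ (the $H_0^1$ weight) and summing over $n$ is where care is needed. I expect this step, for the $H$-valued forcing $h_1$, to be the main obstacle, because it loses one derivative.
\begin{itemize}
\item For the $h_1$ contribution, write $\|h_1\|_H^2=\sum_n|h_{1,n}|^2$. Then the factor to control is $\lambda_n/(\lambda_n-\theta)$, which is bounded by $\lambda_{N+1}/(\lambda_{N+1}-\theta)$ for all $n>N$. For $n\le N$ the factor $\lambda_n/(\theta-\lambda_n)$ is bounded, since only finitely many modes are involved.
\item For the $h_2\in H_0^1$ contribution, the weight $\lambda_n$ is already contained in $\|h_2\|_{H_0^1}^2$, so the factor is simply $1/|\lambda_n-\theta|\le 1/\min\{\theta-\lambda_N,\lambda_{N+1}-\theta\}$.
\end{itemize}
Taking the supremum over $t\le0$ and adding the $\mathcal Hp$ bound yields the stated $C_\theta$ estimate. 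Continuity in time of $v$ follows from dominated convergence in the same mode-wise representation.
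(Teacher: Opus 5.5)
Your proposal is correct and takes essentially the route the paper indicates; the paper gives no proof of its own and defers to Kostianko--Zelik's decomposition in the eigenbasis $\{e_n\}$ with mode-wise ODE solving. You combine that decomposition with the zero extension, the whole-line operator $\mathcal{T}_\theta$ of Lemma~\ref{Zelik lemma 3}, and the correction $\mathcal{H}p$. The two points that make the argument work are both in your proposal: the modes $n\le N$ are integrated from $+\infty$, so $P_N\mathcal{T}_\theta(\widetilde h_1+\widetilde h_2)(0)=0$; and the $C_\theta$ bound for the $H$-valued forcing reduces to the uniform boundedness of $\lambda_n/|\lambda_n-\theta|$.
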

	The proofs of the above two lemmas are based on the decomposition of the solution $v(t)$ with respect to the basis $\{ {e_n}\} _{n = 1}^\infty $ and solving the corresponding ODEs (see e.g., \cite{KZ2017, KZ2018, KZ2024, ZS2014}).
	\par
	Now let's return to problem \eqref{Perron method equation 1}. We employ Lemma \ref{Zelik lemma 4} to reformulate it as a fixed point problem in the space $L_\theta^2(\mathbb{R}_-,H_0^1)$:
	\begin{equation}\label{Fixed point problem 1}
		v = {{\mathcal T}_\theta } \circ ({\widetilde {{\mathcal I}_1}}(v) + {\widetilde {{\mathcal I}_2}}(v)) + {\mathcal H}p.
	\end{equation}
	By applying the Banach Contraction Theorem, we can establish the existence of an IM for equation \eqref{Transformed Equation 2} (see, e.g., \cite{KZ2017, KZ2018} for more details). Given the equivalence between the original equation \eqref{main equation} and the transformed equation \eqref{Transformed Equation 1} within the neighborhood of $\mathcal{B}$, this result consequently demonstrates the existence of an IM for the one-dimensional Burgers equation. The following theorem establishes the existence of the IM.
	\par
	\begin{theorem}\label{Anna Zelik Theorem}
		Equation \eqref{main equation} possesses an IM in the phase space $H_0^1(0,\pi )$.
	\end{theorem}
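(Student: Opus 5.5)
The plan is to construct the manifold for the cut-off transformed equation \eqref{Transformed Equation 2} by the Perron method, and then pull it back to the Burgers equation \eqref{main equation} through the diffeomorphism $\mathcal U$.

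\textbf{Step 1: spectral gap.} Fix $K\ge K_0(\mathcal R)$ so large that $L_1=CK^{-1/2}$ from Lemma \ref{Smoothness of the nonlinear terms} is small. For the Dirichlet Laplacian on $(0,\pi)$ we have $\lambda_n=n^2$. Take $\theta=(\lambda_N+\lambda_{N+1})/2$. By Lemma \ref{Zelik lemma 3}, the Lipschitz constant of the right-hand side of \eqref{Fixed point problem 1} in $L^2_\theta(\mathbb R_-,H_0^1)$ is at most
\begin{equation*}
\frac{2\lambda_{N+1}^{1/2}L_1+2L_2}{\lambda_{N+1}-\lambda_N}=\frac{2(N+1)L_1+2L_2}{2N+1}.
\end{equation*}
As $N\to\infty$ this tends to $L_1$. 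With $L_1<1/2$, it is therefore $<1$ for all $N$ large, depending on $K$ through $L_2=C_K$. Hence the map in \eqref{Fixed point problem 1} is a contraction. This gives a unique solution $V(p,\cdot)$, Lipschitz in $p$, and we define $M(p):=Q_NV(p,0)$ as in \eqref{Definition of M}.

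\textbf{Step 2: invariance, graph and tracking.} Invariance follows from uniqueness of the backward problem: shifting a solution in time gives another solution, so the graph is mapped onto itself by $\widetilde S(t)$. The graph property holds by construction. For exponential tracking, I would use the standard Perron argument in the forward weighted space $L^2_{\theta}(\mathbb R_+)$. One solves for $w=\widetilde v-v$ with the condition that $Q_N w$ is prescribed at $t=0$ through the cone and squeezing structure, again by contraction under the same gap inequality, as in \cite{KZ2017,KZ2018}.

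I expect this tracking step to be the main technical obstacle, because the non-local forcing splits into an $H$-valued part and an $H_0^1$-valued part. Each part must be handled with its own operator norm bound from Lemma \ref{Zelik lemma 3}, exactly as in Step 1.

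\textbf{Step 3: transfer to Burgers.} Take $\mathcal M_u:=\mathcal U(\mathcal M\cap B_{H_0^1}(0,r))$. Since $\mathcal V(\mathcal B)\subset B(0,r)$ and the cut-off is identically $1$ there, the trajectories of \eqref{Transformed Equation 2} and of \eqref{Transformed Equation 1} coincide on this ball. Hence $\mathcal U$ conjugates the two flows on the absorbing ball $\mathcal B$. Invariance and exponential tracking then transfer, with the constants multiplied by the Lipschitz constants of $\mathcal U$ and $\mathcal V$ from Lemmas \ref{Zelik lemma 1} and \ref{Zelik lemma 2}.

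Trajectories starting outside $\mathcal B$ enter $\mathcal B$ in finite time. This only changes the constant $C$ in the tracking estimate, since on a finite time interval the flow is Lipschitz in $H_0^1$.

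One must also check that the image $\mathcal U(\mathcal M)$ is still a Lipschitz graph over $P_N$. One way is to note that $\mathcal U=\mathrm{Id}+(a(v)-1)v$ is a small perturbation in the relevant norms. Alternatively, one can simply accept, following \cite{KZ2017}, that the IM of the transformed equation serves as the IM of \eqref{main equation} via the diffeomorphism.
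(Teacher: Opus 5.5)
Your proposal follows essentially the same route as the paper. You run a Perron contraction for the cut-off transformed equation \eqref{Transformed Equation 2} in $L^2_\theta(\mathbb R_-,H_0^1)$ with $\theta=(\lambda_N+\lambda_{N+1})/2$, then transfer the result to \eqref{main equation} through $\mathcal U$. Your explicit bound $\frac{2(N+1)L_1+2L_2}{2N+1}\to L_1$ (fix $K$ so that $L_1$ is small, then take $N$ large to absorb $L_2=C_K$) is the mechanism the paper uses later in Section~5. For this theorem itself the paper only sketches the argument and cites \cite{KZ2017,KZ2018}.

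\textbf{Correction in Step 3.} Drop the first option. $\mathcal U=\mathrm{Id}+(a(v)-1)v$ is not a small perturbation. Indeed $a(v)(x)=\exp\bigl(-\tfrac12\int_0^x P_K(a(v)v)\,ds\bigr)$ is only bounded uniformly in $K$ on $B_{H_0^1}(0,\mathcal R)$, not close to $1$. So $\mathcal U(\mathcal M)$ need not be a graph over $P_N(H_0^1)$. The theorem must be read in the generalized sense of your second option: the IM of the transformed equation, transported by the diffeomorphism, is regarded as the IM of \eqref{main equation}. That is exactly the paper's reading.

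\textbf{Minor point.} For initial data outside $\mathcal B$, the tracking constant depends on the bounded set containing $u_0$, through the entrance time. It is therefore not a single global $C$ as your phrasing suggests.
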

	
	\section{The construction of the smooth extension of IM}\label{Section 4}
	\noindent
	\par
	The primary objective of this section is to construct the ${C^{n,\varepsilon }}$-smooth extension of IM for abstract parabolic equation \eqref{abstract parabolic problem} with two nontrivial nonlinear terms via the Whitney Extension Theorem and to obtain the desired ${C^{n,\varepsilon }}$-smooth IF by using spectral projection.
	\par
	First, we need to construct a family of IMs. Using the Perron method, the desired manifolds for equation \eqref{abstract parabolic problem} are obtained by solving the backward in time boundary value problems:
	\begin{equation}\label{my Perron method equation}
		\begin{array}{*{20}{c}}
			{{\partial _t}v + Av = {F_1}(v) + {F_2}(v),}&{{P_{{N_i}}}v{|_{t = 0}} = p,}&{t \le 0}
		\end{array}
	\end{equation}
	in the weighted space $L_{{\theta _i}}^2({\mathbb{R}_ - },H_0^1)$ with the norm $\| v \|_{L_{{\theta _i}}^2({\mathbb{R}_ - },H_0^1)}^2: = \int_{ - \infty }^0 {{e^{2{\theta _i}t}}\| {v(t)} \|_{H_0^1}^2dt} $, where the weight parameter $\theta _i$ is chosen as
	\begin{equation}
		{\theta _i} = {\lambda _{{N_i}}} + \gamma + \lambda _{{N_i} + 1}^{1/2}{L_1} + {L_2},
	\end{equation}
	with
	\begin{equation}
		\gamma = \mathop {\min }\limits_{i \in \{ 1, \cdot \cdot \cdot ,n\} } \{ {\frac{{{\lambda _{{N_i} + 1}} - {\lambda _{{N_i}}} - (i - 1){\lambda _{{N_{i - 1}}}} - \lambda _{{N_i} + 1}^{1/2}(i + 1){L_1} + (i + 1){L_2}}}{{i + 1}}} \}.
	\end{equation}
	To reduce the complexity of expression, we will no longer distinguish between the $p$ obtained through different projection methods. Under this choice of $\theta _i$ and conditions \eqref{my spectral gap condition}, it follows that ${\theta _i} \in ({\lambda _{{N_i}}},{\lambda _{{N_{i + 1}}}})$. The solution of this equation is constructed using the Banach Contraction Theorem, and the desired maps ${M_{{N_i}}}:{P_{{N_i}}}(H_0^1) \to {Q_{{N_i}}}(H_0^1)$ are defined via $V(p,0) = {v_0}$, ${M_{{N_i}}}(p): = {Q_{{N_i}}}V(p,0)$.
	\par
	By Lemma \ref{Zelik lemma 4}, we reformulate the above equations as the fixed point problems in the space $L_{{\theta _i}}^2({\mathbb{R}_ - },H_0^1)$:
	\begin{equation}
		\begin{array}{*{20}{c}}
			{v = {{\mathcal T}_{{\theta _i}}} \circ ({\widetilde F_1}(v) + {\widetilde F_2}(v)) + {{\mathcal H}_i}p,}&{i = 1,2, \cdot \cdot \cdot ,n,}
		\end{array}
	\end{equation}
	where ${{\mathcal H}_i}p: = \sum\nolimits_{n' = 1}^{{N_i}} {{e^{ - {\lambda _{n'}}t}}(p,{e_{n'}}){e_{n'}}} $. Thus, the Lipschitz constant $Lip$ of the right-hand side of the above equations can be estimated by
	\begin{multline*}
		Lip \le \frac{{\lambda _{{N_i} + 1}^{1/2}{L_1}}}{{\min \{ {{\theta _i} - {\lambda _{{N_i}}},{\lambda _{{N_i} + 1}} - {\theta _i}} \}}} + \frac{{{L_2}}}{{\min \{ {{\theta _i} - {\lambda _{{N_i}}},{\lambda _{{N_i} + 1}} - {\theta _i}} \}}} \le \frac{{\lambda _{{N_i} + 1}^{1/2}{L_1} + {L_2}}}{{\gamma + \lambda _{{N_i} + 1}^{1/2}{L_1} + {L_2}}} < 1.
	\end{multline*}
	Hence, by applying the Banach Contraction Theorem, there exists a family of functions $\{ {M_{{N_i}}}:{P_{{N_i}}}(H_0^1) \to {Q_{{N_i}}}(H_0^1),i = 1, \cdot \cdot \cdot ,n\} $ and the corresponding IMs $\{ {{\mathcal M}_{{N_i}}}\} $ that satisfy the following nested relationship:
	\begin{equation*}
		{{\mathcal M}_{{N_1}}} \subset {{\mathcal M}_{{N_2}}} \subset \cdot \cdot \cdot \subset {{\mathcal M}_{{N_n}}},
	\end{equation*}
	where ${N_i}$ is the dimension of the $i$th IM and the position of the associated eigenvalue in the spectrum is given by ${\lambda _{{N_i}}}$. In the subsequent analysis, we focus on the smoothness of these IMs. The following lemma is crucial for our analysis.
	\begin{lemma}\label{my main lemma}
		Under the assumptions of Theorem \ref{my theorem} hold, the following hold for all $i = 1, \cdot \cdot \cdot ,n$ and $j = 0, \cdot \cdot \cdot ,i - 1$.
		\begin{enumerate}[label=\textbf{\roman{enumi}}., leftmargin=0.75cm]
			\item There exists a family of exponents ${\theta _i} + j{\theta _{i - 1}}$ satisfying:
			\begin{equation}\label{control spectrum 2}
				\left\{ {\begin{array}{*{20}{c}}
						{{\theta _i} + j{\theta _{i - 1}} \in ({\lambda _{{N_i}}},{\lambda _{{N_i} + 1}}),}\\
						{\frac{{\lambda _{{N_i} + 1}^{1/2}{L_1}}}{{\min \{ {\theta _i} + j{\theta _{i - 1}} - {\lambda _{{N_i}}},{\lambda _{{N_i} + 1}} - {\theta _i} - j{\theta _{i - 1}}\} }} + \frac{{{L_2}}}{{\min \{ {\theta _i} + j{\theta _{i - 1}} - {\lambda _{{N_i}}},{\lambda _{{N_i} + 1}} - {\theta _i} - j{\theta _{i - 1}}\} }} < 1.}
				\end{array}} \right.
			\end{equation}
			\item For arbitrary function $v \in C({\mathbb{R}_ - },H_0^1)$, $p \in {(H_0^1)_{{N_i}}}$, ${h_1} \in L_{{\theta _i} + j{\theta _{i - 1}} + \varepsilon }^2({\mathbb{R}_ - },H)$ and ${h_2} \in L_{{\theta _i} + j{\theta _{i - 1}} + \varepsilon }^2({\mathbb{R}_ - },H_0^1)$, the corresponding equation of variations
			\begin{equation}\label{Perron method equation 5}
				\left\{ {\begin{array}{*{20}{c}}
						{{\partial _t}w + Aw - ({F'_1}(v)w + {F'_2}(v)w) = {h_1}(t) + {h_2}(t),}\\
						{\begin{array}{*{20}{c}}
								{{P_{{N_i}}}w{|_{t = 0}} = p,}&{t \le 0.}
						\end{array}}
				\end{array}} \right.
			\end{equation}
			admits a unique solution $w \in L_{{\theta _i} + j{\theta _{i - 1}}}^2({\mathbb{R}_ - },H_0^1 ) \cap {C_{{\theta _i} + j{\theta _{i - 1}}}}({\mathbb{R}_ - },H_0^1 )$ with estimates:
			\begin{equation*}
				\begin{array}{*{20}{c}}
					{{{\| w \|}_{L_{{\theta _i} + j{\theta _{i - 1}}}^2({\mathbb{R}_ - },H_0^1)}} \le {C_{L,{\theta _i} + j{\theta _{i - 1}}}}({{\| {{h_1}} \|}_{L_{{\theta _i} + j{\theta _{i - 1}}}^2({\mathbb{R}_ - },H)}} + {{\| {{h_2}} \|}_{L_{{\theta _i} + j{\theta _{i - 1}}}^2({\mathbb{R}_ - },H_0^1)}} + {{\| {p} \|}_{H_0^1}}),}\\
					{{{\| w \|}_{{C_{{\theta _i} + j{\theta _{i - 1}}}}({\mathbb{R}_ - },H_0^1)}} \le {C_{L,{\theta _i} + j{\theta _{i - 1}}}}({{\| {{h_1}} \|}_{L_{{\theta _i} + j{\theta _{i - 1}}}^2({\mathbb{R}_ - },H)}} + {{\| {{h_2}} \|}_{L_{{\theta _i} + j{\theta _{i - 1}}}^2({\mathbb{R}_ - },H_0^1)}} + {{\| {p} \|}_{H_0^1}}),}
				\end{array}
			\end{equation*}
			where ${F'_1}(v) \in {\mathcal L}(H_0^1,H)$, ${F'_2}(v) \in {\mathcal L}(H_0^1,H_0^1)$ and the constant ${C_{L,{\theta _i} + j{\theta _{i - 1}}}}$ is independent of $h$ and $p$.
		\end{enumerate}
	\end{lemma}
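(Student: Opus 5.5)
Part i. reduces to arithmetic with the definitions of $\theta_i$ and $\gamma$, and part ii. follows from it by the Perron fixed-point argument already used for \eqref{my Perron method equation}. We read $\gamma$ as the minimum over $i$ of
\[
\frac{\lambda_{N_i+1}-\lambda_{N_i}-(i-1)\lambda_{N_{i-1}}-(i+1)\bigl(\lambda_{N_i+1}^{1/2}L_1+L_2\bigr)}{i+1},
\]
which is positive by \eqref{my spectral gap condition}. The sign of the final $L_2$ term in the displayed formula looks like a typo.

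For part i., set $\Lambda_i:=\lambda_{N_i+1}^{1/2}L_1+L_2$ and $\mu:=\theta_i+j\theta_{i-1}$ with $0\le j\le i-1$.

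\emph{Lower gap.} Since $\theta_{i-1}>0$ (with the convention $\theta_0=0$, $\lambda_{N_0}=0$), we get
\[
\mu-\lambda_{N_i}\ge \theta_i-\lambda_{N_i}=\gamma+\Lambda_i>\Lambda_i .
\]

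\emph{Upper gap.} We have $\lambda_{N_i+1}-\mu\ge \lambda_{N_i+1}-\theta_i-(i-1)\theta_{i-1}$. Inserting $\theta_{i-1}=\lambda_{N_{i-1}}+\gamma+\Lambda_{i-1}$ gives a lower bound
\[
\lambda_{N_i+1}-\lambda_{N_i}-(i-1)\lambda_{N_{i-1}}-i\gamma-\Lambda_i-(i-1)\Lambda_{i-1}.
\]
The definition of $\gamma$ with index $i$ gives $(i+1)\gamma\le \lambda_{N_i+1}-\lambda_{N_i}-(i-1)\lambda_{N_{i-1}}-(i+1)\Lambda_i$. Substituting this yields
\[
\lambda_{N_i+1}-\mu\ge \gamma+i\Lambda_i-(i-1)\Lambda_{i-1} .
\]

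This upper gap is the main obstacle: we need $\Lambda_{i-1}\le\Lambda_i$, i.e.\ $\lambda_{N_{i-1}+1}\le\lambda_{N_i+1}$. That holds because $N_{i-1}<N_i$ (the family is increasing) and the eigenvalues are non-decreasing. With it, $\lambda_{N_i+1}-\mu\ge\gamma+\Lambda_i>\Lambda_i$.

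The two gaps give $\mu\in(\lambda_{N_i},\lambda_{N_i+1})$, and $\min\{\mu-\lambda_{N_i},\lambda_{N_i+1}-\mu\}\ge\gamma+\Lambda_i$. The contraction ratio in \eqref{control spectrum 2} is then at most $\Lambda_i/(\gamma+\Lambda_i)<1$. If the paper's sign convention in $\gamma$ were taken literally, I would instead shrink $\gamma$ to the positive quantity above; this is the step I would double-check.

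For part ii., apply Lemma \ref{Zelik lemma 4} with $A$ in place of $-\partial_{xx}$ and weight $\mu$, rewriting \eqref{Perron method equation 5} as the fixed-point problem
\[
w=\mathcal T_\mu\circ\bigl(\widetilde{F'_1(v)w}+\widetilde{F'_2(v)w}+\widetilde h_1+\widetilde h_2\bigr)+\mathcal H_i p
\]
in $L^2_\mu(\mathbb R_-,H_0^1)$.
\begin{enumerate}[label=\textbf{\arabic{enumi}}., leftmargin=0.75cm]
\item The data are admissible: $h_k\in L^2_{\mu+\varepsilon}\subset L^2_\mu$ on $\mathbb R_-$, since $e^{2\mu t}\le e^{2(\mu+\varepsilon)t}$ for $t\le 0$.
\item The map $w\mapsto F'_1(v)w$ is bounded from $L^2_\mu(H_0^1)$ to $L^2_\mu(H)$ with norm at most $L_1$, pointwise in $t$. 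Likewise $w\mapsto F'_2(v)w$ maps $L^2_\mu(H_0^1)$ to itself with norm at most $L_2$.
\item By the operator bounds of Lemma \ref{Zelik lemma 3}, the linear part of the right-hand side has norm at most the left side of \eqref{control spectrum 2}, which part i. shows is $<1$.
\end{enumerate}
Banach's theorem then gives a unique solution. The $L^2_\mu$ estimate comes from the resolvent bound $(1-\mathrm{Lip})^{-1}$ combined with the estimate of Lemma \ref{Zelik lemma 4}. The $C_\mu$ estimate follows by applying the $C_\theta$ part of Lemma \ref{Zelik lemma 4} to the fixed-point identity with the already-controlled right-hand side. The resulting constant $C_{L,\mu}$ depends only on $L_1$, $L_2$ and $\mu$.
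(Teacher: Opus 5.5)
Your argument follows the paper's. Part i.\ is the same arithmetic with $\theta_i$ and $\gamma$, and part ii.\ is the same Perron contraction, with ratio bounded by the left side of \eqref{control spectrum 2}. Your chain giving $\lambda_{N_i+1}-\theta_i-j\theta_{i-1}\ge\gamma+\Lambda_i$, with $\Lambda_i:=\lambda_{N_i+1}^{1/2}L_1+L_2$, is more direct than the paper's. You are also right that the sign of the $L_2$ term in the formula for $\gamma$ must be flipped for the paper's last inequality to hold.

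The step that fails is your item 1: on $\mathbb{R}_-$ the inclusion goes the other way. For $t\le 0$ and $\varepsilon>0$ one has $e^{2(\mu+\varepsilon)t}\le e^{2\mu t}$. Hence $\|h\|_{L^2_{\mu+\varepsilon}(\mathbb{R}_-)}\le\|h\|_{L^2_{\mu}(\mathbb{R}_-)}$, so $L^2_\mu\subset L^2_{\mu+\varepsilon}$; a larger exponent permits faster growth as $t\to-\infty$. Consequently:
\begin{itemize}
\item data that lie only in $L^2_{\mu+\varepsilon}$ need not lie in $L^2_\mu$;
\item the right-hand sides of the claimed estimates may then be infinite;
\item your fixed-point map $w\mapsto\mathcal T_\mu\circ(\cdots+\widetilde h_1+\widetilde h_2)+\mathcal H_ip$ in $L^2_\mu$ is not even defined for such data.
\end{itemize}
The weight mismatch is really a defect of the statement, and the paper's proof silently takes $h_k\in L^2_\mu$. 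Your justification of it, however, is wrong.

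The repair is to run your contraction directly at the weight $\mu+\varepsilon$. This is also the form in which the lemma is invoked later, with weights $(1+\varepsilon)\theta_i$ and $\theta_1+\theta_2+\varepsilon$. By your part i., $\min\{\mu+\varepsilon-\lambda_{N_i},\lambda_{N_i+1}-\mu-\varepsilon\}\ge\gamma+\Lambda_i-\varepsilon$. So for $0<\varepsilon<\gamma$ the contraction ratio is at most $\Lambda_i/(\gamma+\Lambda_i-\varepsilon)<1$. Lemmas \ref{Zelik lemma 3} and \ref{Zelik lemma 4} then give a unique solution in $L^2_{\mu+\varepsilon}\cap C_{\mu+\varepsilon}$, with the estimates stated in the $(\mu+\varepsilon)$-weighted norms. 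Alternatively, if you read the hypothesis as $h_k\in L^2_\mu$, the rest of your proof goes through verbatim.
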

	\par
	\begin{proof}
		We first prove part (i) of Lemma \ref{my main lemma}. Adopting the previous assumptions, we define
		\begin{equation}
			\begin{array}{*{20}{c}}
				{{\theta _i} = {\lambda _{{N_i}}} + \gamma + \lambda _{{N_i} + 1}^{1/2}{L_1} + {L_2},}&{{\theta _0} = 0,}&{i = 1, \cdot \cdot \cdot ,n,}
			\end{array}
		\end{equation}
		where
		\begin{equation}
			\gamma = \mathop {\min }\limits_{i \in \{ 1, \cdot \cdot \cdot ,n\} } \{ {\frac{{{\lambda _{{N_i} + 1}} - {\lambda _{{N_i}}} - (i - 1){\lambda _{{N_{i - 1}}}} - \lambda _{{N_i} + 1}^{1/2}(i + 1){L_1} + (i + 1){L_2}}}{{i + 1}}} \}.
		\end{equation}
		Under conditions \eqref{my spectral gap condition}, for all $i = 1, \cdot \cdot \cdot ,n$ and $j = 0, \cdot \cdot \cdot ,i - 1$, we obtain the following estimate:
		\begin{multline*}
			{\lambda _{{N_i}}} + \gamma + \lambda _{{N_i} + 1}^{1/2}{L_1} + {L_2} \le {\theta _i} + (j - 1){\theta _{i - 1}} \le {\theta _i} + (i - 1){\theta _{i - 1}} \\
			= {\lambda _{{N_i}}} + \gamma + \lambda _{{N_i} + 1}^{1/2}{L_1} + {L_2} + (i - 1)({\lambda _{{N_{i - 1}}}} + \gamma + \lambda _{{N_{i - 1}} + 1}^{1/2}{L_1} + {L_2}) \\
			< {\lambda _{{N_i}}} + (i - 1)({\lambda _{{N_{i - 1}}}}) + i(\gamma + \lambda _{{N_i} + 1}^{1/2}{L_1} + {L_2}) < {\lambda _{{N_{i + 1}}}} - (\lambda _{{N_i} + 1}^{1/2}{L_1} + {L_2}),
		\end{multline*}
		where we used that ${\lambda _{{N_{i - 1}}}} < {\lambda _{{N_{i - 1}} + 1}} \le {\lambda _{{N_i}}} < {\lambda _{{N_i} + 1}}$. This yields the following estimate:
		\begin{multline*}
			\frac{1}{{\min \{ {\theta _i} + j{\theta _{i - 1}} - {\lambda _{{N_i}}},{\lambda _{{N_i} + 1}} - {\theta _i} - j{\theta _{i - 1}}\} }} \\
			\le \frac{1}{{\min \{ {\theta _i} - {\lambda _{{N_i}}},{\lambda _{{N_i} + 1}} - {\theta _i} - (i - 1){\theta _{i - 1}}\} }} \le \frac{1}{{\gamma + \lambda _{{N_i} + 1}^{1/2}{L_1} + {L_2}}}.
		\end{multline*}
		Consequently, the exponents ${\theta _i} + j{\theta _{i - 1}}$ satisfy the bound:
		\begin{multline*}
			\frac{{\lambda _{{N_i} + 1}^{1/2}{L_1}}}{{\min \{ {\theta _i} + j{\theta _{i - 1}} - {\lambda _{{N_i}}},{\lambda _{{N_i} + 1}} - {\theta _i} - j{\theta _{i - 1}}\} }} + \frac{{{L_2}}}{{\min \{ {\theta _i} + j{\theta _{i - 1}} - {\lambda _{{N_i}}},{\lambda _{{N_i} + 1}} - {\theta _i} - j{\theta _{i - 1}}\} }} \\
			\le \frac{{\lambda _{{N_i} + 1}^{1/2}{L_1} + {L_2}}}{{\gamma + \lambda _{{N_i} + 1}^{1/2}{L_1} + {L_2}}} < 1.
		\end{multline*}
		\par
		Next, we will prove the second part of this lemma.
		\par
		As shown above, the problem \eqref{Perron method equation 5} can be solved via the Banach Contraction Theorem treating the term ${F'_1}(v)w + {F'_2}(v)w$ as a perturbation analogously to the nonlinear case. We can write the problem \eqref{Perron method equation 5} as a fixed point problem
		\begin{equation}\label{Fixed point problem 3}
			w = {{\mathcal T}_{{\theta _i} + j{\theta _{i - 1}}}} \circ ({\widetilde F_1}^\prime (v)w + {\widetilde F_2}^\prime (v)w) + {{\mathcal T}_{{\theta _i} + j{\theta _{i - 1}}}} \circ (\widetilde {{h_1}}(t) + \widetilde {{h_2}}(t)) + {{\mathcal H}_i}p
		\end{equation}
		in the space $L_{{\theta _i} + j{\theta _{i - 1}}}^2( {{\mathbb{R}_ - },H} )$, where ${{\mathcal H}_i}p: = \sum\nolimits_{n' = 1}^{{N_i}} {{e^{ - {\lambda _{n'}}t}}(p,{e_{n'}}){e_{n'}}} $.
		\par
		To achieve this, it is sufficient to verify that the norm of ${{\mathcal T}_{{\theta _i} + j{\theta _{i - 1}}}} \circ ({\widetilde F_1}^\prime (v) + {\widetilde F_2}^\prime (v))$ is less than one. Hence, by utilizing the estimate \eqref{Operator norm}, the norm of ${{\mathcal T}_{{\theta _i} + j{\theta _{i - 1}}}} \circ ({\widetilde F_1}^\prime (v) + {\widetilde F_2}^\prime (v))$ does not exceed ${\frac{{\lambda _{{N_i} + 1}^{1/2}{L_1}}}{{\min \{ {\theta _i} + j{\theta _{i - 1}} - {\lambda _{{N_i}}},{\lambda _{{N_i} + 1}} - {\theta _i} - j{\theta _{i - 1}}\} }} + \frac{{{L_2}}}{{\min \{ {\theta _i} + j{\theta _{i - 1}} - {\lambda _{{N_i}}},{\lambda _{{N_i} + 1}} - {\theta _i} - j{\theta _{i - 1}}\} }}}$.
		\par
		Following the aforementioned selection, we obtain
		\begin{equation*}
			\frac{{\lambda _{{N_i} + 1}^{1/2}{L_1}}}{{\min \{ {\theta _i} + j{\theta _{i - 1}} - {\lambda _{{N_i}}},{\lambda _{{N_i} + 1}} - {\theta _i} - j{\theta _{i - 1}}\} }} + \frac{{{L_2}}}{{\min \{ {\theta _i} - {\lambda _{{N_i}}},{\lambda _{{N_i} + 1}} - {\theta _i} - (i - 1){\theta _{i - 1}}\} }} < 1.
		\end{equation*}
		\par
		Thus, there exists a unique solution to the problem \eqref{Perron method equation 5}. Similarly, we have
		\begin{equation*}
			\begin{array}{*{20}{c}}
				{{{\| w \|}_{L_{{\theta _i} + j{\theta _{i - 1}}}^2({\mathbb{R}_ - },H_0^1)}} \le {C_{L,{\theta _i} + j{\theta _{i - 1}}}}({{\| {{h_1}} \|}_{L_{{\theta _i} + j{\theta _{i - 1}}}^2({\mathbb{R}_ - },H)}} + {{\| {{h_2}} \|}_{L_{{\theta _i} + j{\theta _{i - 1}}}^2({\mathbb{R}_ - },H_0^1)}} + {{\| {p} \|}_{H_0^1}}),}\\
				{{{\| w \|}_{{C_{{\theta _i} + j{\theta _{i - 1}}}}({\mathbb{R}_ - },H_0^1)}} \le {C_{L,{\theta _i} + j{\theta _{i - 1}}}}({{\| {{h_1}} \|}_{L_{{\theta _i} + j{\theta _{i - 1}}}^2({\mathbb{R}_ - },H)}} + {{\| {{h_2}} \|}_{L_{{\theta _i} + j{\theta _{i - 1}}}^2({\mathbb{R}_ - },H_0^1)}} + {{\| {p} \|}_{H_0^1}}),}
			\end{array}
		\end{equation*}
		where the constant ${C_{L,{\theta _i} + j{\theta _{i - 1}}}}$ are independent of $h_1$, $h_2$ and $p$.
	\end{proof}
	\par
	In order to construct the $C^{n,{\varepsilon} }$-smooth extension of IM, we first need to prove that each IM in the family $\{ {{{\mathcal M}_{{N_1}}},{{\mathcal M}_{{N_2}}}, \cdot \cdot \cdot ,{{\mathcal M}_{{N_n}}}} \}$ is $C^{1,{\varepsilon}}$-smooth. Hence, we will utilize Lemma \ref{my main lemma} to prove that the above obtained family of IMs is ${C^{1,\varepsilon }}$-smooth.
	\par
	\begin{proposition}\label{C1 smooth IM}
		Under the assumptions of Theorem \ref{my theorem}, the following statements hold:
		\begin{enumerate}[label=\textbf{\roman{enumi}}., leftmargin=0.75cm]
			\item The function $V$ defined in \eqref{my Perron method equation} is ${C^{1,\varepsilon }}$-smooth, so the associated IM ${\{ {{\mathcal M}_{{N_i}}}\} _{i = 1, \cdot \cdot \cdot ,n}}$ is ${C^{1,\varepsilon }}$-smooth.
			\item For any $p,\xi \in {P_{{N_i}}}(H_0^1)$, the first order Fr\'{e}chet derivative ${M'_{{N_i}}}(p)\xi $ corresponds to the value of ${Q_{{N_i}}}$ projection of the ${V'_\xi }: = V'( {p,t} )\xi $ at $t = 0$, where the function ${V'_\xi }$ solves the equation of variations:
			\begin{equation}\label{C1 smooth equation}
				\left\{ {\begin{array}{*{20}{c}}
						{{\partial _t}{V'_\xi } + A{V'_\xi } - ({F'_1}(v){V'_\xi } + {F'_2}(v){V'_\xi }) = 0,}\\
						{\begin{array}{*{20}{c}}
								{{P_{{N_i}}}{V'_\xi }{|_{t = 0}} = p,}&{t \le 0.}
						\end{array}}
				\end{array}} \right.
			\end{equation}
			\item For any ${p_1},{p_2} \in {P_{{N_i}}}(H_0^1)$, the map $V$ satisfies the following estimate:
			\begin{equation}
				{\| {V({p_2},t) - V({p_1},t) - V'({p_1},t)\xi } \|_{L_{(1 + \varepsilon ){\theta _i}}^2({\mathbb{R}_ - },H_0^1)}} \le C\| {{p_2} - {p_1}} \|_{H_0^1}^{1 + \varepsilon },
			\end{equation}
			where $\xi = {p_2} - {p_1}$, ${\theta _i} = {\lambda _{{N_i}}} + \gamma + \lambda _{{N_i} + 1}^{1/2}{L_1} + {L_2}$ and $C$ is independent of ${p_1}$ and ${p_2}$.
		\end{enumerate}
	\end{proposition}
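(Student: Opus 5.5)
The plan follows the standard Perron-method argument for $C^{1,\varepsilon}$-smoothness, carried out separately for the two nonlinearities with weights $\theta_i$ and $(1+\varepsilon)\theta_i$. We fix $i$ and write $V(p)=V(p,\cdot)\in L^2_{\theta_i}(\mathbb{R}_-,H_0^1)$ for the unique solution of the fixed-point problem $v=\mathcal T_{\theta_i}\circ(\widetilde F_1(v)+\widetilde F_2(v))+\mathcal H_i p$.

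\textbf{Step 1: Lipschitz continuity in $p$.} Subtracting the fixed-point equations for $p_1,p_2$ and using the contraction estimate (Lemma \ref{Zelik lemma 3} with the bound $Lip<1$ derived above) gives
\[
\|V(p_2)-V(p_1)\|_{L^2_{\theta_i}}\le C\|\xi\|_{H_0^1},\qquad \xi=p_2-p_1 .
\]
The $C_{\theta_i}$ estimate of Lemma \ref{Zelik lemma 4} gives the same pointwise weighted bound.

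\textbf{Step 2: The candidate derivative.} Define $V'(p_1)\xi:=w$ as the solution of the equation of variations \eqref{C1 smooth equation} with $v=V(p_1)$, $h_1=h_2=0$ and data $P_{N_i}w|_{t=0}=\xi$. Existence, uniqueness and the bound $\|w\|\le C\|\xi\|$ in $L^2_{\theta_i}$ and $C_{\theta_i}$ come from Lemma \ref{my main lemma}(ii) with $j=0$. Since $\varepsilon$ is small and the contraction inequality \eqref{control spectrum 2} is strict, the same lemma also holds at the shifted weight $(1+\varepsilon)\theta_i\in(\lambda_{N_i},\lambda_{N_i+1})$ with a contraction constant still below $1$.

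\textbf{Step 3: Estimating the remainder.} Let $r:=V(p_2)-V(p_1)-w$. Then $r$ solves the variation equation with $v=V(p_1)$, zero projected data, and forcing
\[
h_k=F_k(V(p_2))-F_k(V(p_1))-F_k'(V(p_1))\big(V(p_2)-V(p_1)\big),\qquad k=1,2,
\]
with $h_1$ taking values in $H$ and $h_2$ in $H_0^1$. Because $F_k\in C_b^2$, the Taylor Theorem \ref{Taylor Theorem} gives pointwise in $t$ both the bound $\|h_k(t)\|\le C\|V(p_2,t)-V(p_1,t)\|^{2}_{H_0^1}$ and the cruder bound $\le 2L_k\|V(p_2,t)-V(p_1,t)\|_{H_0^1}$. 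Interpolating, $\|h_k(t)\|\le C\|V(p_2,t)-V(p_1,t)\|^{1+\varepsilon}_{H_0^1}$. We then measure $h_k$ in $L^2_{(1+\varepsilon)\theta_i}$:
\[
\|h_k\|_{L^2_{(1+\varepsilon)\theta_i}}^2 \le C\int_{-\infty}^0 e^{2(1+\varepsilon)\theta_i t}\|V(p_2,t)-V(p_1,t)\|^{2+2\varepsilon}\,dt \le C\,\|V(p_2)-V(p_1)\|^{2\varepsilon}_{C_{\theta_i}}\,\|V(p_2)-V(p_1)\|^2_{L^2_{\theta_i}}.
\]
By Step 1 the right-hand side is at most $C\|\xi\|^{2+2\varepsilon}$. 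Applying Lemma \ref{my main lemma}(ii) at weight $(1+\varepsilon)\theta_i$ with $p=0$ yields statement (iii).

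\textbf{Step 4: Differentiability and Hölder continuity.} Statement (iii), together with the Hölder continuity of $p\mapsto V'(p)$, gives $C^{1,\varepsilon}$-smoothness of $V$ as a map into $L^2_{(1+\varepsilon)\theta_i}$, and hence (via the $C_{(1+\varepsilon)\theta_i}$ bound) of the evaluation at $t=0$. The Hölder continuity of $V'$ follows by comparing the two variation equations: the difference of their forcings is $(F'_k(V(p_2))-F'_k(V(p_1)))w$, bounded by $C\|V(p_2)-V(p_1)\|^\varepsilon\|w\|$ through interpolation of Lipschitz continuity and boundedness of $F_k'$; this is estimated by the same weight trick. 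Alternatively, the Converse Taylor Theorem applied to (iii) gives the $C^{1,\varepsilon}$ conclusion directly. Then $M_{N_i}(p)=Q_{N_i}V(p,0)$ inherits the smoothness, and $M'_{N_i}(p)\xi=Q_{N_i}V'_\xi(0)$, which proves (i) and (ii).

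\textbf{Main obstacle.} The delicate point is the weight shift $\theta_i\to(1+\varepsilon)\theta_i$. The contraction at the new weight must hold for the split operator bound $\lambda_{N_i+1}^{1/2}L_1+L_2$, which requires taking $\varepsilon$ small depending on the gap $\gamma$. One must also check that $F_1$'s loss of regularity (values in $H$) is absorbed by the $\lambda_{N_i+1}^{1/2}$ factor of $\mathcal T$ rather than by the interpolation in Step 3.
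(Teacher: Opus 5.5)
Your proposal is correct and follows essentially the same route as the paper: a Lipschitz bound for $V(p_2)-V(p_1)$ in $L^2_{\theta_i}\cap C_{\theta_i}$, then the remainder solving the equation of variations with Taylor-remainder forcing bounded pointwise by $\|V(p_2,t)-V(p_1,t)\|_{H_0^1}^{1+\varepsilon}$, then the weight shift to $(1+\varepsilon)\theta_i$ via the splitting $\|\cdot\|_{L^2_{\theta_i}}^{2}\|\cdot\|_{C_{\theta_i}}^{2\varepsilon}$, then Lemma \ref{my main lemma} at the shifted weight (both the $L^2$ and $C$ bounds), and finally the Converse Taylor Theorem applied to $M_{N_i}(p)=Q_{N_i}V(p,0)$. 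The differences are cosmetic: you get the Lipschitz bound directly from the contraction rather than from the averaged operators $L^1_{v_1,v_2},L^2_{v_1,v_2}$, and you correctly impose $P_{N_i}V'_\xi|_{t=0}=\xi$ where the statement has the typo $p$.
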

	\par
	\begin{proof}
		To prove the IM is ${C^{1,\varepsilon }}$-smooth, it suffices to show that its corresponding map ${M_{{N_i}}}$ is ${C^{1,\varepsilon }}$-smooth. Define ${p_1},{p_2} \in {P_{{N_i}}}(H_0^1)$, ${v_1}(t): = V({p_1},t)$ and ${v_2}(t): = V({p_2},t)$ be the trajectories associated with the IM. Additionally, let $\bar v(t): = {v_1}(t) - {v_2}(t)$ and $\xi : = {p_1} - {p_2}$. Then $\bar v$ satisfies the equation
		\begin{equation}\label{C0 smooth equation}
			\left\{ {\begin{array}{*{20}{c}}
					{{\partial _t}\bar v + A\bar v - (L_{{v_1},{v_2}}^1(t)\bar v + L_{{v_1},{v_2}}^2(t)\bar v) = 0,}\\
					{\begin{array}{*{20}{c}}
							{{P_{{N_i}}}\bar v{|_{t = 0}} = \xi ,}&{t \le 0,}
					\end{array}}
			\end{array}} \right.
		\end{equation}
		where operators $L_{{v_1},{v_2}}^1(t)$ and $L_{{v_1},{v_2}}^2(t)$ are defined as follows:
		\begin{equation*}
			\begin{array}{*{20}{c}}
				{L_{{v_1},{v_2}}^1(t): = \int_0^1 {{F'_1}(s{v_1}(t) + (1 - s){v_2}(t))} ds,}&{L_{{v_1},{v_2}}^2(t): = \int_0^1 {{F'_2}(s{v_1}(t) + (1 - s){v_2}(t))} ds,}
			\end{array}
		\end{equation*}
		and it holds that the norm of ${{\mathcal T}_{{\theta _i}}} \circ (L_{{v_1},{v_2}}^1(t) + L_{{v_1},{v_2}}^2(t))$ is less than one.
		\par
		Applying Lemma \ref{my main lemma} to problem \eqref{C0 smooth equation} yields
		\begin{equation}\label{Estimation of V}
			\begin{array}{*{20}{c}}
				{{{\| {\bar v} \|}_{L_{{\theta _i}}^2({\mathbb{R}_ - },H_0^1)}} \le {C_{\theta_i} }{{\| \xi \|}_{H_0^1}},}&{{{\| {\bar v} \|}_{{C_{{\theta _i}}}({\mathbb{R}_ - },H_0^1)}} \le {C_{\theta_i} }{{\| \xi \|}_{H_0^1}},}
			\end{array}
		\end{equation}
		where the constant ${C_{{\theta_i} }}$ is independent of $p_1, p_2$ and $\bar v$.
		\par
		Note that the function $V'(p,t)\xi$ is well-defined for all $p,\xi \in {P_{{N_i}}}(H_0^1)$ due to Lemma \ref{my main lemma} and satisfy the analogue of \eqref{Estimation of V}. Let $w(t): = \bar v(t) - V'({p_1},t)\xi $ with $\xi : = {p_1} - {p_2}$, which satisfies the problem:
		\begin{equation*}
			\left\{ {\begin{array}{*{20}{c}}
					{{\partial _t}w + Aw - ({F'_1}({v_1})w + {F'_2}({v_1})w) = {h_1}(t) + {h_2}(t),}\\
					{{P_{{N_i}}}w{|_{t = 0}} = 0,}
			\end{array}} \right.
		\end{equation*}
		where ${h_1}(t): = {F_1}({v_1}) - {F_1}({v_2}) - {F'_1}({v_1})\bar v$ and ${h_2}(t): = {F_2}({v_1}) - {F_2}({v_2}) - {F'_2}({v_1})\bar v$.
		\par
		According to the hypothesis ${\theta_i} = {\lambda_{N_i}} + \gamma + \lambda_{N_i+1}^{1/2}L_1 + L_2$, and given that the spectral gap condition \eqref{my spectral gap condition} satisfy the strict inequality (i.e., equality does not hold), there exists a sufficiently small $\varepsilon \in (0,1)$ such that $(1 + \varepsilon)\theta_i \in (\lambda_{N_i}, \lambda_{N_i+1})$. This condition also ensures that the Banach Contraction Theorem holds, namely:
		\begin{equation*}
			\frac{{\lambda _{{N_i} + 1}^{1/2}{L_1}}}{{\min \{ {(1 + \varepsilon ){\theta _i} - {\lambda _{{N_i}}},{\lambda _{{N_i} + 1}} - (1 + \varepsilon ){\theta _i}} \}}} + \frac{{{L_2}}}{{\min \{ {(1 + \varepsilon ){\theta _i} - {\lambda _{{N_i}}},{\lambda _{{N_i} + 1}} - (1 + \varepsilon ){\theta _i}} \}}} < 1.
		\end{equation*}
		It is particularly noteworthy that $\varepsilon$ can be chosen arbitrarily small. Therefore, even if ${\theta_i}$ is subjected to a slight perturbation and becomes $(1 + \varepsilon)\theta_i$, Lemma \ref{my main lemma} remains valid. Furthermore, since ${F_1} \in C_b^{1,\varepsilon }(H_0^1,H)$ and ${F_2} \in C_b^{1,\varepsilon }(H_0^1,H_0^1)$, we have:
		\begin{equation*}
			\begin{array}{*{20}{c}}
				{{{\| {{h_1}(t)} \|}_H} \le \| {\bar v(t)} \|_{H_0^1}^{1 + \varepsilon },}&{{{\| {{h_2}(t)} \|}_{H_0^1}} \le \| {\bar v(t)} \|_{H_0^1}^{1 + \varepsilon }.}
			\end{array}
		\end{equation*}
		This consequently establishes the estimates:
		\begin{multline*}
			\| {{h_1}(t)} \|_{L_{(1 + \varepsilon ){\theta_i} }^2({\mathbb{R}_ - },H)}^2 \le \| {\bar v} \|_{L_{(1 + \varepsilon ){\theta_i} }^2({\mathbb{R}_ - },H_0^1)}^{2(1 + \varepsilon )} = \int_{ - \infty }^0 {{e^{2{\theta_i} t}}\| {\bar v(t)} \|_{H_0^1}^2{e^{2\varepsilon {\theta_i} t}}\| {\bar v(t)} \|_{H_0^1}^{2\varepsilon }dt} \\
			\le {C_{\theta_i} }\| {\bar v} \|_{L_{\theta_i} ^2({\mathbb{R}_ - },H_0^1)}^2\| {\bar v} \|_{{C_{\theta_i} }({\mathbb{R}_ - },H_0^1)}^{2\varepsilon },
		\end{multline*}
		and
		\begin{equation*}
			\| {{h_2}(t)} \|_{L_{(1 + \varepsilon ){\theta_i} }^2({\mathbb{R}_ - },H_0^1)}^2 \le \| {\bar v} \|_{L_{(1 + \varepsilon ){\theta_i} }^2({\mathbb{R}_ - },H_0^1)}^{2(1 + \varepsilon )} \le {C_{\theta_i} }\| {\bar v} \|_{L_{\theta_i} ^2({\mathbb{R}_ - },H_0^1)}^2\| {\bar v} \|_{{C_{\theta_i} }({\mathbb{R}_ - },H_0^1)}^{2\varepsilon }.
		\end{equation*}
		\par
		Due to estimate \eqref{Estimation of V} and Lemma \ref{my main lemma}, we obtain
		\begin{multline*}
			{\| w \|_{L_{(1 + \varepsilon ){\theta_i} }^2({\mathbb{R}_ - },H_0^1)}} \le {C_{\theta_i} }({\| {{h_1}} \|_{L_{(1 + \varepsilon ){\theta_i} }^2({\mathbb{R}_ - },H)}} + {\| {{h_2}} \|_{L_{(1 + \varepsilon ){\theta_i} }^2({\mathbb{R}_ - },H_0^1)}}) \\
			\le {C_{\theta_i} }{\| {\bar v} \|_{L_{\theta_i} ^2({\mathbb{R}_ - },H_0^1)}}\| {\bar v} \|_{{C_{\theta_i} }({\mathbb{R}_ - },H_0^1)}^\varepsilon \le {C_{\theta_i} }\| \xi \|_{H_0^1}^{1 + \varepsilon },
		\end{multline*}
		and
		\begin{equation*}
			{\| w \|_{{C_{( {1 + \varepsilon } ){\theta_i} }}( {{\mathbb{R}_ - },H_0^1} )}} \le {C_{\theta_i} }\| \xi \|_{H_0^1}^{1 + \varepsilon },
		\end{equation*}
		where the constant ${C_{\theta_i} }$ is independent of $v$, ${h_1}$ and ${h_2}$. We finally derive
		\begin{equation*}
			{\| {{M_{{N_i}}}( {{p_2}} ) - {M_{{N_i}}}( {{p_1}} ) - {M'_{{N_i}}}(p)\xi } \|_{H_0^1}} = {\| {w( 0 )} \|_{H_0^1}} \le {\| w \|_{{C_{( {1 + \varepsilon } ){\theta _i}}}({\mathbb{R}_ - },H_0^1)}} \le {C_{{\theta _i}}}\| \xi \|_{H_0^1}^{1 + \varepsilon }.
		\end{equation*}
		\par
		By the Converse Taylor Theorem, it follows that the map ${{M_{{N_i}}}}$ is ${C^{1,\varepsilon }}$-smooth, and thus the family of IMs ${\{ {\mathcal{M}_{{N_i}}}\} _{i = 1, \cdot \cdot \cdot ,n}}$ constructed above is also ${C^{1,\varepsilon }}$-smooth.
	\end{proof}
	\begin{remark}
		Notably, the exponent $\varepsilon $ depends strictly on the spectral gap condition and takes a very small value. Since the primary objective of this paper is to investigate the smoothness of the IM, rather than determining the precise value of the H\"{o}lder exponent $\varepsilon $, a detailed estimation of $\varepsilon $ will not be provided in the subsequent proofs (see, e.g., \cite{KZ2024, NLS2026, ZS2014} for more details).
	\end{remark}
	\par
	\begin{remark}
		Proposition \ref{C1 smooth IM} can be naturally extended to higher-order derivatives. Specifically, the second derivative ${V''_\xi } = V''( {p,t} )[ {\xi ,\xi } ]$ satisfies
		\begin{equation*}
			\left\{ {\begin{array}{*{20}{c}}
					{{\partial _t}{V''_\xi } + A{V''_\xi } - ({F'_1}(v(t)){V''_\xi } + {F'_2}(v(t)){V''_\xi }) = {F''_1}(v(t))[{V'_\xi },{V'_\xi }] + {F''_2}(v(t))[{V'_\xi },{V'_\xi }],}\\
					{\begin{array}{*{20}{c}}
							{{P_{{N_i}}}{V''_\xi }{|_{t = 0}} = 0,}&{t \le 0.}
					\end{array}}
			\end{array}} \right.
		\end{equation*}
		It is worth noting that ${V'_\xi } \in L_{\theta_i} ^2( {{\mathbb{R}_ - },H} )$, the right-hand side ${F''_1}(v(t))[{V'_\xi },{V'_\xi }] \in L_{2{\theta_i} }^2({\mathbb{R}_ - },H)$, and ${F''_2}(v(t))[{V'_\xi },{V'_\xi }] \in L_{2{\theta_i} }^2({\mathbb{R}_ - },H_0^1)$. We require the following spectral gap condition to solve this problem:
		\begin{equation}
			\left\{ {\begin{array}{*{20}{c}}
					{2{\theta _i} \in ({\lambda _{{N_i}}},{\lambda _{{N_i} + 1}}),}\\
					{\frac{{\lambda _{{N_i} + 1}^{1/2}{L_1}}}{{\min \{ 2{\theta _i} - {\lambda _{{N_i}}},{\lambda _{N + 1}} - 2{\theta _i}\} }} + \frac{{{L_2}}}{{\min \{ 2{\theta _i} - {\lambda _{{N_i}}},{\lambda _{{N_i} + 1}} - 2{\theta _i}\} }} < 1.}
			\end{array}} \right.
		\end{equation}
		However, in practical applications, if the principal operator is a finite-order elliptic operator in a bounded domain, it is difficult to obtain such large exponential spectral gaps, which is the primary challenge in constructing the smooth IMs, see \cite{KZ2024, NLS2026} for more details.
	\end{remark}
	Based on Lemma \ref{my main lemma} and Proposition \ref{C1 smooth IM}, it is evident that for any given $n \in {\mathbb{N}^ + }$, there exists a family of ${C^{1,\varepsilon }}$-smooth functions $\{ {M_{{N_i}}}:{P_{{N_i}}}(H_0^1) \to {Q_{{N_i}}}(H_0^1),i = 1, \cdot \cdot \cdot ,n\} $ defined by ${M_{{N_i}}}(p): = {Q_{{N_i}}}V(p,0)$, and the corresponding IMs $\{ {{\mathcal M}_{{N_i}}}\} $ that satisfy the following nested relationship:
	\begin{equation*}
		{{\mathcal M}_{{N_1}}} \subset {{\mathcal M}_{{N_2}}} \subset \cdot \cdot \cdot \subset {{\mathcal M}_{{N_n}}},
	\end{equation*}
	where ${N_i}$ denotes the eigenvalue corresponding to the $i$th IM and its position in the spectrum is given by ${\lambda _{{N_i}}}$. Therefore, for any $p \in {P_{{N_n}}}{{\mathcal M}_{{N_1}}}$, we have ${P_{{N_i}}}p \in {P_{{N_i}}}{{\mathcal M}_{{N_i}}}$. Since $\dim {P_{{N_n}}}(H_0^1) < \infty $ and ${P_{{N_n}}}{{\mathcal M}_{{N_1}}} \subset {P_{{N_n}}}(H_0^1)$, we can let $X:={P_{{N_n}}}H$ and $V:= {P_{{N_n}}}{{\mathcal M}_{{N_1}}}$ in Whitney Extension Theorem \ref{Whitney Extension Theorem}.
	\par
	In the following, let us provide the definition of the smooth extension of IM (see, e.g., \cite{KZ2024, NLS2026}).
	\begin{definition}\label{Definition of IM extension}
		Let $n \in \mathbb{N}$ and for any $\mu>0$, we say that a ${C^{n,\varepsilon }}$-smooth submanifold ${\widetilde{\mathcal M}_{{N_n}}}$ of the phase space $H_0^1$ is a ${C^{n,\varepsilon }}$-smooth extension of the initial IM ${{\mathcal M}_{{N_1}}}$ for some $\varepsilon > 0$ if
		\begin{enumerate}[label=\textbf{\roman{enumi}.}, leftmargin=0.75cm] 
			\item ${{\mathcal M}_{{N_1}}} \subset {\widetilde{\mathcal M}_{{N_n}}}$.
			\par
			\item The manifold ${\widetilde{\mathcal M}_{{N_n}}}$ is $\mu $-close to the IM ${{\mathcal M}_{{N_n}}}$ in the $C_b^1$-norm, i.e.,
			\begin{equation*}
				{\| {{\widetilde{M}_{{N_n}}}( \cdot ) - {M_{{N_n}}}( \cdot )} \|_{C_b^1({P_{{N_n}}}(H_0^1),{Q_{{N_n}}}(H_0^1))}} \le C{\mu ^\varepsilon },
			\end{equation*}
			with the constant $C$ independent of $\mu $ and $\varepsilon $. Here ${\widetilde M_{{N_n}}}$ and ${M_{{N_n}}}$ respectively denote the functions that generate the manifolds ${\widetilde{\mathcal M}_{{N_n}}}$ and ${\mathcal M}_{{N_n}}$.
		\end{enumerate}
	\end{definition}
	\par
	Next, we will construct an extension manifold $\widetilde {\mathcal{M}}_{N_n}$, which will be $C_b^1$-close to the IM ${{\mathcal{M}}_{{N_n}}}$. First, we restrict the ${C^{1,\varepsilon }}$-smooth map ${M_{{N_n}}}:{P_{{N_n}}}(H_0^1) \to {Q_{{N_n}}}(H_0^1)$ to the invariant set ${P_{{N_n}}}{{\mathcal M}_{{N_1}}}$. By the Whitney Extension Theorem, if there exists a family of polynomials $\{ J_\xi ^n{M_{{N_n}}}(p),p \in V\} $ that satisfy the compatibility condition \eqref{compatibility condition}, then there exists a ${C^{n,\varepsilon }}$-smooth function ${\widehat M_{{N_n}}}:{P_{{N_n}}}(H_0^1) \to {Q_{{N_n}}}(H_0^1)$ such that ${J_\xi ^n{{\widehat M}_{{N_n}}}( p ) = J_\xi ^n{M_{{N_n}}}( p )}$, for ${p \in {P_{{N_n}}}{{\mathcal M}_{{N_1}}}}$.
	\par
	To ensure the ${C^1}$-norm closeness, for any small parameter $\mu > 0$, we construct a cut-off function ${\rho _\mu } \in {C^\infty }({P_{{N_n}}}(H_0^1),\mathbb{R})$. This function satisfies $\rho_\mu(p) = 0$ when $p$ lies in $O_\mu$ (the $\mu$-neighborhood of $P_{N_n}\mathcal{M}_{N_1}$), and $\rho_\mu(p) = 1$ when $p$ does not lie in $O_{2\mu}$. Moreover, since ${P_{{N_n}}}{M_{{N_1}}}$ is ${C^{1,\varepsilon }}$-smooth, we can require that $| {{\nabla _p}{\rho _\mu }(p)} | \le C{\mu ^{ - 1}}$, where the constant $C$ is independent of $\mu $. Finally, we define the desired function ${\widetilde{M}_{{N_n}}}$ as follows:
	\begin{equation}\label{closeness}
		{\widetilde{M}_{{N_n}}}( p ): = ( {1 - {\rho _\mu }( p )} ){\widehat{M}_{{N_n}}}( p ) + {\rho _\mu }( p )( {{S_{{\mu ^n}}}{M_{{N_n}}}} )( p ),
	\end{equation}
	where ${S_{{\mu ^n}}}$ is a standard mollifying operator:
	\begin{equation*}
		( {{S_{{\mu ^n}}}{M_{{N_n}}}} )( p ): = \int_{{\mathbb{R}^{{N_n}}}} {{\beta _{{\mu ^n}}}( {p - q} ){M_{{N_n}}}( q )dq},
	\end{equation*}
	the kernel ${\beta _{{\mu ^n}}}( p ) = \frac{1}{{{{( {{\mu ^n}} )}^{{N_n}}}}}{\beta _1}( {\frac{p}{{{\mu ^n}}}} )$ and ${\beta _1}( p )$ is a smooth, non-negative function with compact support satisfying $\int_{{\mathbb{R}^{{N_n}}}} {{\beta _1}( p )dp} = 1$ (see \cite{KZ2024} for more details).
	\par
	Notably, for conciseness, we define the polynomials $\{ {Q_{{N_n}}}J_\xi ^nW(p,0),p \in {P_{{N_n}}}{{\mathcal M}_{{N_1}}}\} $ obtained by induction in the subsequent proof as $\{ J_\xi ^n{M_{{N_n}}}(p),p \in {P_{{N_n}}}{{\mathcal M}_{{N_1}}}\} $. We now move to proving Theorem \ref{my theorem}.
	\par
	\begin{proof}[Proof of Theorem \ref{my theorem}] Our primary idea is to apply the inductive method to obtain the Taylor jets of each order using Lemma \ref{my main lemma}, and to verify that the Taylor jets of each order satisfy the compatibility conditions \eqref{compatibility condition} of the Whitney Extension Theorem. Finally, applying equation \eqref{closeness}, we establish the ${C^{n,\varepsilon }}$-smooth extension ${\widetilde{\mathcal M}_{{N_n}}}$ and ${C^{n,\varepsilon }}$-smooth IF.
		\par
		\textbf{Step 1. $\bm{n = 2}$.} According to Proposition \ref{C1 smooth IM}, for any ${p_1},p,\xi \in H_0^1$, the first derivative ${V'_\xi }: = V'({P_{{N_1}}}p,t){P_{{N_1}}}\xi $ solves the problem
		\begin{equation}
			\left\{ {\begin{array}{*{20}{c}}
					{{\partial _t}{V'_\xi } + A{V'_\xi } - ({F'_1}(v){V'_\xi } + {F'_2}(v){V'_\xi }) = 0,}\\
					{\begin{array}{*{20}{c}}
							{{P_{{N_1}}}{V'_\xi }{|_{t = 0}} = {P_{{N_1}}}\xi ,}&{t \le 0},
					\end{array}}
			\end{array}} \right.
		\end{equation}
		and belongs to the space $L_{{\theta _1}}^2({\mathbb{R}_ - },H_0^1)$, where ${\theta _1} = {\lambda _{{N_1}}} + \gamma + \lambda _{{N_1} + 1}^{1/2}{L_1} + {L_2}$. Moreover we have the estimate
		\begin{equation*}
			{\| {V({P_{{N_1}}}{p_1},t) - V({P_{{N_1}}}p,t) - V'({P_{{N_1}}}p,t){P_{{N_1}}}\xi } \|_{L_{(1 + \varepsilon ){\theta _1}}^2({\mathbb{R}_ - },H_0^1)}} \le C\| {{P_{{N_1}}}(p - {p_1})} \|_{H_0^1}^{1 + \varepsilon },
		\end{equation*}
		where $\varepsilon > 0$, $\xi : = p - {p_1}$ and $C$ is independent of ${p_1}$ and ${p}$.
		\par
		Similarly, the first derivative ${W'_\xi }: = W'( {{P_{{N_2}}}p,t} ){P_{{N_2}}}\xi $ of IM ${\mathcal{M}_{{N_2}}}$ solves the problem
		\begin{equation}
			\left\{ {\begin{array}{*{20}{c}}
					{{\partial _t}{W'_\xi } + A{W'_\xi } - ({F'_1}(v){W'_\xi } + {F'_2}(v){W'_\xi }) = 0,}\\
					{\begin{array}{*{20}{c}}
							{{P_{{N_2}}}{W'_\xi }{|_{t = 0}} = {P_{{N_2}}}\xi ,}&{t \le 0},
					\end{array}}
			\end{array}} \right.
		\end{equation}
		and belongs to the space $L_{{\theta _2}}^2({\mathbb{R}_ - },H_0^1)$, where ${\theta _2} = {\lambda _{{N_2}}} + \gamma + \lambda _{{N_2} + 1}^{1/2}{L_1} + {L_2}$. Furthermore for any ${p_1},p \in H_0^1$ we have
		\begin{equation*}
			{\| {W({P_{{N_2}}}{p_1},t) - W({P_{{N_2}}}p,t) - W'({P_{{N_2}}}p,t){P_{{N_1}}}\xi } \|_{L_{(1 + \varepsilon ){\theta _2}}^2({\mathbb{R}_ - },H_0^1)}} \le C\| {{P_{{N_2}}}(p - {p_1})} \|_{H_0^1}^{1 + \varepsilon },
		\end{equation*}
		where $C$ is independent of ${p_1}$ and ${p}$.
		\par
		Since ${\mathcal{M}_{{N_1}}} \subset {\mathcal{M}_{{N_2}}}$, then for every $p \in {\mathcal{M}_{{N_1}}}$, we derive
		\begin{equation}
			v(t) = W(p,t): = W({P_{{N_2}}}p,t) = V(p,t): = V({P_{{N_1}}}p,t).
		\end{equation}
		From this, we obtain,
		\begin{equation}\label{The difference of first-order derivative of V and W}
			{\| {V'({P_{{N_1}}}p,t){P_{{N_1}}}\xi - W'({P_{{N_2}}}p,t){P_{{N_2}}}\xi } \|_{L_{(1 + \varepsilon ){\theta _2}}^2({\mathbb{R}_ - },H_0^1)}} \le C\| {{P_{{N_2}}}(p - {p_1})} \|_{H_0^1}^{1 + \varepsilon }.
		\end{equation}
		\par
		For each $p \in {\mathcal{M}_{{N_1}}}$ and $\xi \in H_0^1$, define the ``second derivative" ${W''_\xi }: = W''(p,t)[\xi ,\xi ]: = W''({P_{{N_2}}}p,t)[{P_{{N_2}}}\xi ,{P_{{N_2}}}\xi ]$ of the trajectory $u( t ) = V( {p,t} ) = W( {p,t} )$ as a solution of the problem:
		\begin{equation}\label{C2 extension IM equation}
			\left\{
			\begin{array}{cr}
				\multicolumn{2}{c}{\partial_t W''_\xi + AW''_\xi - (F'_1(v) W''_\xi + F'_2(v) W''_\xi) = 2 F''_1(v) [V'_\xi, W'_\xi] - F''_1(v) [V'_\xi, V'_\xi]} \\
				& \quad + 2 F''_2(v) [V'_\xi, W'_\xi] - F''_2(v) [V'_\xi, V'_\xi], \\
				\multicolumn{2}{c}{ P_{N_2} W''_\xi |_{t = 0} = 0.}
			\end{array}
			\right.
		\end{equation}
		Since ${V'_\xi } \in L_{{\theta _1}}^2({\mathbb{R}_ - },H_0^1) \cap {C_{{\theta _1}}}({\mathbb{R}_ - },H_0^1)$, ${W'_\xi } \in L_{{\theta _2}}^2({\mathbb{R}_ - },H_0^1) \cap {C_{{\theta _2}}}({\mathbb{R}_ - },H_0^1)$, and given that ${F_1} \in C_b^{n + 1}(H_0^1,H)$ and ${F_2} \in C_b^{n + 1}(H_0^1,H_0^1)$, we obtain
		\begin{equation*}
			\| {{F''_1}(v)[{V'_\xi },{W'_\xi }]} \|_{L_{{\theta _1} + {\theta _2}}^2({\mathbb{R}_ - },H)}^2 \le {C_\theta }\| {{V'_\xi }} \|_{L_{{\theta _1}}^2({\mathbb{R}_ - },H_0^1)}^2\| {{W'_\xi }} \|_{{C_{{\theta _2}}}({\mathbb{R}_ - },H_0^1)}^2 \le {C_\theta }\| {{P_{{N_2}}}\xi } \|_{H_0^1}^4.
		\end{equation*}
		Furthermore, since $2{\theta _1} < {\theta _1} + {\theta _2}$, it follows that $2{F''_1}(v)[{V'_\xi },{W'_\xi }] - {F''_1}(v)[{V'_\xi },{V'_\xi }]$ belongs to $L_{{\theta _1} + {\theta _2}}^2({\mathbb{R}_ - },H)$, while $2{F''_2}(v)[{V'_\xi },{W'_\xi }] - {F''_2}(v)[{V'_\xi },{V'_\xi }]$ belongs to $L_{{\theta _1} + {\theta _2}}^2({\mathbb{R}_ - },H_0^1)$. According to Lemma \ref{my main lemma} (when $i = 2$), there exists a unique solution of problem \eqref{C2 extension IM equation} belonging to the space ${C_{{\theta _1} + {\theta _2}}}({\mathbb{R}_ - },H_0^1) \cap L_{{\theta _1} + {\theta _2}}^2({\mathbb{R}_ - },H_0^1)$, the function ${W''_\xi }$ is well-defined and satisfies
		\begin{equation*}
			\begin{array}{*{20}{c}}
				{{{\| {{W''_\xi }} \|}_{L_{{\theta _1} + {\theta _2}}^2({\mathbb{R}_ - },H_0^1)}} \le {C_\theta }\| {{P_{{N_2}}}\xi } \|_{H_0^1}^2,}&{{{\| {{W''_\xi }} \|}_{{C_{{\theta _1} + {\theta _2}}}({\mathbb{R}_ - },H_0^1)}} \le {C_\theta }\| {{P_{{N_2}}}\xi } \|_{H_0^1}^2.}
			\end{array}
		\end{equation*}
		\par
		Let us establish the desired quadratic polynomial $\xi \to J_\xi ^2W(p,t)$, $p \in {{\mathcal M}_{{N_1}}}$ as follows:
		\begin{equation*}
			\begin{array}{*{20}{c}}
				{J_\xi ^2W(p,t): = V(p,t) + W'(p,t)\xi + \frac{1}{2}W''(p,t)[\xi ,\xi ],}&{\xi \in H_0^1.}
			\end{array}
		\end{equation*}
		\par
		Now, we have obtained the second order ``Taylor jet", and we need to verify that it satisfies the compatibility condition in Whitney Extension Theorem:
		\begin{equation*}
			\begin{array}{*{20}{c}}
				{{{\| {P(\xi + \eta ,p) - P(\xi ,{p_1})} \|}_{H_0^1}} \le C({{({{\| \eta \|}_{H_0^1}} + {{\| \xi \|}_{H_0^1}})}^{2 + \varepsilon }}),}&{\eta : = {p_1} - p,}&{\forall \xi \in H_0^1.}
			\end{array}
		\end{equation*}
		\par
		In fact, the above compatibility condition can take on an equivalent form:
		\begin{equation}\label{n=2 compatibility condition}
			{\| {{P_l}({{\{ \xi \} }^l},p + \eta ) - \sum\nolimits_{k = 0}^{n - l} {\frac{1}{{k!}}{P_{l + k}}([{{\{ \xi \} }^l},{{\{ \eta \} }^k}],p)} } \|_{H_0^1}} \le C\| \xi \|_{H_0^1}^l\| \eta \|_{H_0^1}^{n - l + \varepsilon }
		\end{equation}
		for $l = \{ {0, \cdot \cdot \cdot ,n} \}$ and $n = 2$, (see \cite{KZ2024} for more details). Therefore, we will discuss the following three cases (for simplicity, in the subsequent proofs, we will no longer explicitly distinguish or emphasize which spectral projection each component belongs to):
		\begin{enumerate}[label=\textbf{\roman{enumi}}., leftmargin=0cm]
			\item The zero order compatibility condition ($l=0$, $n=2$):
			\begin{equation*}
				{\| {V({p_1},t) - V(p,t) - W'(p,t)\eta - \frac{1}{2}W''(p,t)[\eta ,\eta ]} \|_{L_{{\theta _1} + {\theta _2} + \varepsilon }^2({\mathbb{R}_ - },H_0^1)}} \le C\| \eta \|_{H_0^1}^{2 + \varepsilon }
			\end{equation*}
			for ${p_1},p \in {{\mathcal M}_{{N_1}}}$, $\eta : = {p_1} - p$ and $\xi \in H_0^1$.
			\par
			Let $R(t): = V({p_1},t) - V(p,t) - W'(p,t)\eta - \frac{1}{2}W''(p,t)[\eta ,\eta ]$. Then, as elementary computations show, $R(t)$ satisfies the problem
			\begin{multline}\label{l=0,n=2 equation}
				{\partial _t}R + AR - ({F'_1}(V(p,t))R + {F'_2}(V(p,t))R) \\
				= {F_1}(V({p_1},t)) - {F_1}(V(p,t)) - {F'_1}(V(p,t))[V({p_1},t) - V(p,t)] \\
				- \frac{1}{2}[2{F''_1}(V(p,t))[V'(p,t)\eta ,W'(p,t)\eta ] - {F''_1}(V(p,t))[V'(p,t)\eta ,V'(p,t)\eta ]] \\
				+ {F_2}(V({p_1},t)) - {F_2}(V(p,t)) - {F'_2}(V(p,t))[V({p_1},t) - V(p,t)] \\
				- \frac{1}{2}[2{F''_2}(V(p,t))[V'(p,t)\eta ,W'(p,t)\eta ] - {F''_2}(V(p,t))[V'(p,t)\eta ,V'(p,t)\eta ]],{ {{P_{{N_2}}}R} |_{t = 0}} = 0,
			\end{multline}
			where we have used that $v(t) = W(p,t) = V(p,t)$, for every $p \in {\mathcal{M}_{{N_1}}}$. Since ${F_1} \in C_b^{2,\varepsilon }(H_0^1,H)$, ${F_2} \in C_b^{2,\varepsilon }(H_0^1,H_0^1)$ and $V \in {C^{1,\varepsilon }}(H_0^1,H_0^1)$, we have
			\begin{equation*}
				V({p_1},t) - V(p,t) = V'(p,t)\eta + O_{(1 + \varepsilon ){\theta _1}}^{H_0^1}(\| \eta \|_{H_0^1}^{1 + \varepsilon }),
			\end{equation*}
			\begin{multline*}
				{F_1}(V({p_1},t)) - {F_1}(V(p,t)) - {F'_1}(V(p,t))[V({p_1},t) - V(p,t)] \\
				= \frac{1}{2}{F''_1}(V(p,t))[V'(p,t)\eta ,V'(p,t)\eta ] + O_{{\theta _1} + {\theta _2} + \varepsilon }^H(\| \eta \|_{H_0^1}^{2 + \varepsilon }),
			\end{multline*}
			and
			\begin{multline*}
				{F_2}(V({p_1},t)) - {F_2}(V(p,t)) - {F'_2}(V(p,t))[V({p_1},t) - V(p,t)] \\
				= \frac{1}{2}{F''_2}(V(p,t))[V'(p,t)\eta ,V'(p,t)\eta ] + O_{{\theta _1} + {\theta _2} + \varepsilon }^{H_0^1}(\| \eta \|_{H_0^1}^{2 + \varepsilon })
			\end{multline*}
			for some small $\varepsilon$, where $\eta : = {p_1} - p$. Then the right-hand side of equation \eqref{l=0,n=2 equation} equals to
			\begin{multline*}
				{F''_1}(V(p,t))[V'(p,t)\eta ,V'(p,t)\eta - W'(p,t)\eta ] + O_{{\theta _1} + {\theta _2} + \varepsilon }^H(\| \eta \|_{H_0^1}^{2 + \varepsilon }) \\
				+ {F''_2}(V(p,t))[V'(p,t)\eta ,V'(p,t)\eta - W'(p,t)\eta ] + O_{{\theta _1} + {\theta _2} + \varepsilon }^{H_0^1}(\| \eta \|_{H_0^1}^{2 + \varepsilon }).
			\end{multline*}
			Using \eqref{The difference of first-order derivative of V and W}, we obtain:
			\begin{equation*}
				\begin{array}{*{20}{c}}
					{{F''_1}(V(p,t))[V'(p,t)\eta ,V'(p,t)\eta - W'(p,t)\eta ] = O_{{\theta _1} + {\theta _2} + \varepsilon }^H(\| \eta \|_{H_0^1}^{2 + \varepsilon }),}\\
					{{F''_2}(V(p,t))[V'(p,t)\eta ,V'(p,t)\eta - W'(p,t)\eta ] = O_{{\theta _1} + {\theta _2} + \varepsilon }^{H_0^1}(\| \eta \|_{H_0^1}^{2 + \varepsilon }).}
				\end{array}
			\end{equation*}
			Therefore $R(t)$ satisfies the problem
			\begin{equation*}
				\left\{ {\begin{array}{*{20}{c}}
						{{\partial _t}R + AR - ({F'_1}(V(p,t))R + {F'_2}(V(p,t))R) = O_{{\theta _1} + {\theta _2} + \varepsilon }^H(\| \eta \|_{H_0^1}^{2 + \varepsilon }) + O_{{\theta _1} + {\theta _2} + \varepsilon }^{H_0^1}(\| \eta \|_{H_0^1}^{2 + \varepsilon }),}\\
						{{P_{{N_2}}}R{|_{t = 0}} = 0.}
				\end{array}} \right.
			\end{equation*}
			Thus, the Lemma \ref{my main lemma} gives us that
			\begin{equation}
				{\| R \|_{L_{{\theta _1} + {\theta _2} + \varepsilon }^2({\mathbb{R}_ - },H_0^1)}} \le {C_\theta }\| \eta \|_{H_0^1}^{2 + \varepsilon }
			\end{equation}
			for some small $\varepsilon$.
			\item The first order compatibility condition ($l=1$, $n=2$):
			\begin{equation*}
				{\| {W'({p_1},t)[\xi ] - W'(p,t)[\xi ] - W''(p,t)[\eta ,\xi ]} \|_{L_{{\theta _1} + {\theta _2} + \varepsilon }^2({\mathbb{R}_ - },H_0^1)}} \le C\| \eta \|_{H_0^1}^{1 + \varepsilon }\| \xi \|_{H_0^1}^1
			\end{equation*}
			for ${p_1},p \in {{\mathcal M}_{{N_1}}}$ and $\eta : = {p_1} - p$.
			\par
			To this end, we expand the difference $\bar w(t): = W'({p_1},t)\xi - W'(p,t)\xi $, ${p_1},p \in {\mathcal{M}_{{N_1}}}$ in terms of $\eta : = {p_1} - p$. By the definition of $W'$, this function satisfies the problem:
			\begin{multline*}
				{\partial _t}\bar w + A\bar w - ({F'_1}(V({p_1},t))\bar w + {F'_2}(V({p_1},t))\bar w) = ({F'_1}(V({p_1},t)) - {F'_1}(V(p,t)))W'(p,t)\xi \\
				+ ({F'_2}(V({p_1},t)) - {F'_2}(V(p,t)))W'(p,t)\xi ,{ {{P_{{N_2}}}\bar w} |_{t = 0}} = 0.
			\end{multline*}
			Since ${F_1} \in C_b^{2,\varepsilon }(H_0^1,H)$, ${F_2} \in C_b^{2,\varepsilon }(H_0^1,H_0^1)$ and $V \in {C^{1,\varepsilon }}(H_0^1,H_0^1)$, we have 
			\begin{equation*}
				\begin{array}{*{20}{c}}
					{{F'_1}(V({p_1},t)) - {F'_1}(V(p,t)))W'(p,t)\xi = {F''_1}[V'({p_1},t)\eta ,W'(p,t)\xi ] + O_{{\theta _1} + {\theta _2} + \varepsilon }^H(\| \eta \|_{H_0^1}^{1 + \varepsilon }\| \xi \|_{H_0^1}^1),}\\
					{{F'_2}(V({p_1},t)) - {F'_2}(V(p,t)))W'(p,t)\xi = {F''_2}[V'({p_1},t)\eta ,W'(p,t)\xi ] + O_{{\theta _1} + {\theta _2} + \varepsilon }^{H_0^1}(\| \eta \|_{H_0^1}^{1 + \varepsilon }\| \xi \|_{H_0^1}^1),}
				\end{array}
			\end{equation*}
			where ${p_1},p \in {{\mathcal M}_{{N_1}}}$, $\eta : = {p_1} - p$ and $\xi \in H_0^1$.
			\par
			Next, we analyze the bilinear forms ${F''_1}[V'({p_1},t)\eta ,W'(p,t)\xi ]$ and ${F''_2}[V'({p_1},t)\eta ,W'(p,t)\xi ]$ (w.r.t. $\eta ,\xi$). Note that, in contrast to the case where the IM is $C^2$, this form is even not symmetric, so it should be corrected. Specifically, we begin by establishing two identities:
			\begin{multline*}
				{F''_1}[V'({p_1},t)\eta ,W'(p,t)\xi ] = \{ {F''_1}(V(p,t))[V'(p,t)\eta ,W'(p,t)\xi ] + {F''_1}(V(p,t))[V'(p,t)\xi ,W'(p,t)\eta ] \\
				- {F''_1}(V(p,t))[V'(p,t)\eta ,V'(p,t)\xi ]\} - \{ {F''_1}(V(p,t))[V'(p,t)\xi ,W'(p,t)\eta - V'(p,t)\eta ]\},
			\end{multline*}
			and
			\begin{multline*}
				{F''_2}[V'({p_1},t)\eta ,W'(p,t)\xi ] = \{ {F''_2}(V(p,t))[V'(p,t)\eta ,W'(p,t)\xi ] + {F''_2}(V(p,t))[V'(p,t)\xi ,W'(p,t)\eta ] \\
				- {F''_2}(V(p,t))[V'(p,t)\eta ,V'(p,t)\xi ]\} - \{ {F''_2}(V(p,t))[V'(p,t)\xi ,W'(p,t)\eta - V'(p,t)\eta ]\},
			\end{multline*}
			where the first term appearing in the right-hand side of these equations constitutes the symmetric bilinear form associated with the quadratic forms:
			\begin{equation*}
				2{F''_1}(V(p,t))[V'(p,t)\xi ,W'(p,t)\xi ] - {F''_1}(V(p,t))[V'(p,t)\xi ,V'(p,t)\xi ],
			\end{equation*}
			and
			\begin{equation*}
				2{F''_2}(V(p,t))[V'(p,t)\xi ,W'(p,t)\xi ] - {F''_2}(V(p,t))[V'(p,t)\xi ,V'(p,t)\xi ],
			\end{equation*}
			which defines $W''$ through problem \eqref{C2 extension IM equation}. At the same time, based on estimate \eqref{The difference of first-order derivative of V and W}, we obtain the following estimates for the second terms:
			\begin{equation*}
				\begin{array}{*{20}{c}}
					{{F''_1}(V(p,t))[V'(p,t)\xi ,W'(p,t)\eta - V'(p,t)\eta ] = O_{{\theta _1} + {\theta _2} + \varepsilon }^H(\| \eta \|_{H_0^1}^{1 + \varepsilon }\| \xi \|_{H_0^1}^1),}\\
					{{F''_2}(V(p,t))[V'(p,t)\xi ,W'(p,t)\eta - V'(p,t)\eta ] = O_{{\theta _1} + {\theta _2} + \varepsilon }^{H_0^1}(\| \eta \|_{H_0^1}^{1 + \varepsilon }\| \xi \|_{H_0^1}^1).}
				\end{array}
			\end{equation*}
			\par
			Thus, by Lemma \ref{my main lemma}, we have:
			\begin{equation}
				{\| {\bar w(t) - W''(p,t)[\eta ,\xi ]} \|_{L_{{\theta _1} + {\theta _2} + \varepsilon }^2({\mathbb{R}_ - },H_0^1)}} \le C\| \eta \|_{H_0^1}^{1 + \varepsilon }\| \xi \|_{H_0^1}^1
			\end{equation}
			for ${p_1},p \in {{\mathcal M}_{{N_1}}}$ and $\xi \in H_0^1$.
			\item The second order compatibility condition ($l=2$, $n=2$):
			\begin{equation*}
				{\| {W''({p_1},t)[\xi ,\xi ] - W''(p,t)[\xi ,\xi ]} \|_{L_{{\theta _1} + {\theta _2} + \varepsilon }^2({\mathbb{R}_ - },H_0^1)}} \le C\| \xi \|_{H_0^1}^2\| \eta \|_{H_0^1}^\varepsilon
			\end{equation*}
			for ${p_1},p \in {{\mathcal M}_{{N_1}}}$, $\eta : = {p_1} - p$ and $\xi \in H_0^1$.
			\par
			Let $R(t): = W''({p_1},t)[\xi ,\xi ] - W''(p,t)[\xi ,\xi ]$. Then, $R(t)$ satisfies the equation:
			\begin{equation*}
				{\partial _t}R + AR - ({F'_1}(V({p_1},t))R + {F'_2}(V({p_1},t))R) = {h_{1,1}} + {h_{1,2}} + {h_{1,3}} + {h_{2,1}} + {h_{2,2}} + {h_{2,3}},
			\end{equation*}
			where ${h_{1,3}}: = [{F'_1}(V({p_1},t)) - {F'_1}(V(p,t))][{W''_\xi }]$, ${h_{2,3}}: = [{F'_2}(V({p_1},t)) - {F'_2}(V(p,t))][{W''_\xi }]$,
			\begin{multline*}
				{h_{1,1}}: = 2{F''_1}(V({p_1},t))[{V'_{1,\xi }},{W'_{1,\xi }} - {W'_\xi }] + 2{F''_1}(V({p_1},t))[{V'_{1,\xi }} - {V'_\xi },{W'_\xi }] \\
				+ 2[{F''_1}(V({p_1},t)) - 2{F''_1}(V(p,t))][{V'_\xi },{W'_\xi }],
			\end{multline*}
			\begin{multline*}
				{h_{1,2}}: = - \{ {F''_1}(V({p_1},t))[{V'_{1,\xi }},{V'_{1,\xi }} - {V'_\xi }] + {F''_1}(V({p_1},t))[{V'_{1,\xi }} - V',{V'_\xi }] \\
				+ [{F''_1}(V({p_1},t)) - {F''_1}(V(p,t))][{V'_\xi },{V'_\xi }]\},
			\end{multline*}
			\begin{multline*}
				{h_{2,1}}: = 2{F''_2}(V({p_1},t))[{V'_{1,\xi }},{W'_{1,\xi }} - {W'_\xi }] + 2{F''_2}(V({p_1},t))[{V'_{1,\xi }} - {V'_\xi },{W'_\xi }] \\
				+ 2[{F''_2}(V({p_1},t)) - 2{F''_2}(V(p,t))][{V'_\xi },{W'_\xi }],
			\end{multline*}
			and
			\begin{multline*}
				{h_{2,2}}: = - \{ {F''_2}(V({p_1},t))[{V'_{1,\xi }},{V'_{1,\xi }}] - \{ {F''_2}(V({p_1},t))[{V'_{1,\xi }},{V'_{1,\xi }} - {V'_\xi }] + {F''_2}(V({p_1},t))[{V'_{1,\xi }} - V',{V'_\xi }] \\
				+ [{F''_2}(V({p_1},t)) - {F''_2}(V(p,t))][{V'_\xi },{V'_\xi }]\}.
			\end{multline*}
			Here ${V'_\xi }: = V'(p,t)\xi$, ${V''_\xi }: = V''(p,t)[\xi ,\xi ]$, ${V'_{1,\xi }}: = V'({p_1},t)\xi $, ${W'_\xi }: = W'(p,t)\xi $, ${W''_\xi }: = W''(p,t)[\xi ,\xi ]$, ${W'_{1,\xi }}: = W'({p_1},t)\xi $. Using ${F_1} \in C_b^{2,\varepsilon }(H_0^1,H)$, ${F_2} \in C_b^{2,\varepsilon }(H_0^1,H_0^1)$ and $V \in {C^{1,\varepsilon }}(H_0^1,H_0^1)$ in conjunction with Lemma \ref{my main lemma}, $R(t)$ satisfies the following estimate:
			\begin{equation*}
				{\| R \|_{L_{{\theta _1} + {\theta _2} + \varepsilon }^2({\mathbb{R}_ - },H_0^1)}} \le {C_\theta }\| \xi \|_{H_0^1}^2\| \eta \|_{H_0^1}^\varepsilon,
			\end{equation*}
			where ${C_\theta }$ is independent of $\xi $ and $\eta $.
		\end{enumerate}
		\par
		This finishes the verification of the compatibility conditions. With the help of equation \eqref{closeness}, we constructed the required manifold ${{\widetilde{\mathcal{M}}_{{N_2}}}}$. This concludes the proof of the theorem in the case of $n = 2$.
		\par
		\textbf{Step 2. $\bm{n > 2}$.} We will proceed by induction with respect to $n$.
		\par
		Suppose that for any fixed $( {n - 1} ) \in \mathbb{N}^+$, $p \in {P_{{N_{n - 1}}}}{\mathcal{M}_{{N_1}}}$, we have constructed the $( {n - 1} )$th ``Taylor jet" $J_\xi ^{n - 1}V( {p,t} )$, which satisfies the following compatibility condition:
		\begin{equation}\label{(n-1)th compatibility condition}
			J_\xi ^{n - 1}V({p_1},t) - J_{\xi + \eta }^{n - 1}V(p,t) = O_{{\theta _{n - 1}} + (n - 2){\theta _{n - 2}} + \varepsilon }^{H_0^1}({({\| \eta \|_{H_0^1}} + {\| \xi \|_{H_0^1}})^{n - 1 + \varepsilon }}).
		\end{equation}
		Here, $\xi \in {P_{{N_{n - 1}}}}H_0^1$, $\eta : = {p_1} - p$, $\varepsilon > 0$, the exponents ${\theta _i} = {\lambda _{{N_i}}} + \gamma + \lambda _{{N_i} + 1}^{1/2}{L_1} + {L_2}$ satisfy ${\theta _i} + (j - 1){\theta _{i - 1}} < {\lambda _{{N_{i + 1}}}} - (\lambda _{{N_i} + 1}^{1/2}{L_1} + {L_2})$, for $i = 1,2, \cdot \cdot \cdot ,n - 1$. Rewriting equation \eqref{(n-1)th compatibility condition} in the form of truncated jets gives:
		\begin{equation}\label{(n-1)th compatibility condition truncated jets}
			j_\xi ^{n - 1}V({p_1},t) + j_\eta ^{n - 1}V(p,t) - j_{\xi + \eta }^{n - 1}V(p,t) = O_{(n - 1){\theta _{n - 1}} + \varepsilon }^{H_0^1}({({\| \eta \|_{H_0^1}} + {\| \xi \|_{H_0^1}})^{n - 1 + \varepsilon }}),
		\end{equation}
		where we have used ${\theta _{n - 2}} < {\theta _{n - 1}}$. Additionally, we induction assumption that \eqref{(n-1)th compatibility condition truncated jets} holds for all $m \le n - 1$, i.e.,
		\begin{equation}
			J_\xi ^mV({p_1},t) - J_{\xi + \eta }^mV(p,t) = O_{m{\theta _{n - 1}} + \varepsilon }^{H_0^1}({({\| \eta \|_{H_0^1}} + {\| \xi \|_{H_0^1}})^{m + \varepsilon }}).
		\end{equation}
		\par
		Next, we define the $n$th ``Taylor jet" $J_\xi ^{n}W( {p,t} )$ of function $W( {p,t} )$ using the $n$th spectral gap as follows:
		\begin{equation}\label{extension Taylor jets}
			J_\xi ^nW( {p,t} ) = W( {p,t} ) + \sum_{d = 1}^n {\frac{1}{{d!}}} {W^{( d )}}( {p,t} )[{{{\{ \xi \}}^d}}],
		\end{equation}
		where $\xi \in {P_{{N_n}}}(H_0^1)$ and $p \in {P_{{N_n}}}{\mathcal{M}_{{N_1}}}$. Recalling the case $n = 2$, we construct the required jet \eqref{extension Taylor jets} as a backward solution of the problem:
		\begin{equation}
			\left\{ {\begin{array}{*{20}{c}}
					{{\partial _t}J_\xi ^nW(p,t) + AJ_\xi ^nW(p,t) = F_1^{[n]}(p,\xi ,t) + F_2^{[n]}(p,\xi ,t),}\\
					{{P_{{N_n}}}J_\xi ^nW(p,t){|_{t = 0}} = {P_{{N_n}}}(p + \xi ),}
			\end{array}} \right.
		\end{equation}
		where
		\begin{multline}\label{Expansion of F 2}
			F_i^{[n]}(p,\xi ,t): = {F_i}(W(p,t)) + F'(W(p,t))j_\xi ^nW(p,t) \\
			+ \sum\limits_{d = 2}^n {\frac{1}{{d!}}} (dF_i^{(d)}(W(p,t){[\{ j_\xi ^{n - 1}V(p,t))\} ^{d - 1}},j_\xi ^{n - 1}W(p,t)] \\
			- (d - 1)F_i^{(d)}(W(p,t))[{\{ j_\xi ^{n - 1}V(p,t)\} ^d}]), i = 1,2.
		\end{multline}
		Notably, the symbol ``$[n]$" indicates that all terms of order greater than $n$ are removed from the right-hand side. Thus, $F_i^{[n]}$ denotes a polynomial of degree at most $n$ in $\xi$ (with $\xi \in {P_{{N_n}}}(H_0^1)$). This process of dropping terms means that we replace
		\begin{multline}\label{Replacement Technique}
			[{\{ j_\xi ^{n - 1}V(p,t)\} ^{d - 1}},j_\xi ^{n - 1}W(p,t)] \to \\
			\sum\limits_{{n_1} + \cdot \cdot \cdot + {n_d} \le n} {{B_{{n_1}, \cdot \cdot \cdot ,{n_d}}}} \{ j_\xi ^{{n_1}}V(p,t), \cdot \cdot \cdot ,j_\xi ^{{n_{d - 1}}}V(p,t),j_\xi ^{{n_d}}W(p,t)\} ,
		\end{multline}
		where ${n_i} \in {\mathbb{N}^ + }$ and the coefficients ${B_{{n_1}, \cdot \cdot \cdot ,{n_d}}} \in \mathbb{R}$ are chosen such that polynomials in the left and right-hand side of \eqref{Replacement Technique} coincide up to order ${\{ \xi \} ^n}$ inclusively. By applying the formulas for the higher order chain rule (Fa\`{a} di Bruno type formulas, see e.g., \cite{HJ2014}), we can derive explicit expressions for these coefficients. Since these expressions are lengthy and not crucial for the subsequent content, so we omit them (see \cite{KZ2024, NLS2026} for more details). Moreover, the term $[{\{ j_\xi ^{n - 1}V(p,t)\} ^d}]$ is treated in a similar way.
		\par
		Expanding \eqref{Expansion of F 2} in series with respect to $\xi $, we obtain recursive problems for finding the derivatives $W_\xi ^{( i )}: = {W^{( i )}}( {p,t} )[{\{ \xi \} ^i}]$, $i = 2, \cdot \cdot \cdot ,n$:
		\begin{equation*}
			\left\{
			\begin{array}{cr}
				\multicolumn{2}{c}{{\partial _t}W_\xi ^{(i)} + AW_\xi ^{(i)} - ({F'_1}(W(p,t))W_\xi ^{(i)} + {F'_2}(W(p,t))W_\xi ^{(i)}) = {\phi _1}(j_\xi ^{i - 1}V,j_\xi ^{i - 1}W)} \\
				& + {\phi _2}(j_\xi ^{i - 1}V,j_\xi ^{i - 1}W), \\
				\multicolumn{2}{c}{{{ {{P_{{N_n}}}W_\xi ^{(i)}} |}_{t = 0}} = 0,}
			\end{array}
			\right.
		\end{equation*}
		where ${{\phi _1}}$ and ${{\phi _2}}$ are smooth maps from $H_0^1$ to $H$ and $H_0^1$ to $H_0^1$. These maps are polynomials of order $(i - 1)$ in $\xi $ and do not contain derivative terms $W_\xi ^{(l)}$ of order $l \ge i$. Thus, the derivatives $W_\xi ^{(l)}$ of each order can be determined recursively. For $i = n$, according to the selection of ${\theta _{n - 1}}$ and ${\theta _n}$, ${\phi _1}(j_\xi ^{i - 1}V,j_\xi ^{i - 1}W)$ belongs to the weighted space $L_{{\theta _n} + ( {i - 1} ){\theta _{n - 1}}}^2( {{\mathbb{R}_ - },H} )$, and ${\phi _2}(j_\xi ^{i - 1}V,j_\xi ^{i - 1}W)$ belongs to the weighted space $L_{{\theta _n} + ( {i - 1} ){\theta _{n - 1}}}^2( {{\mathbb{R}_ - },H_0^1} )$. Through Lemma \ref{my main lemma}, we consequently establish the existence and uniqueness of homogeneous polynomials $W_\xi ^{( i )}$ that satisfying
		\begin{equation}
			\begin{array}{*{20}{c}}
				{{{\| {W_\xi ^{(i)}} \|}_{L_{{\theta _n} + (i - 1){\theta _{n - 1}}}^2({\mathbb{R}_ - },H_0^1)}} \le {C_{L,\theta }}\| \xi \|_{H_0^1}^i,}&{i = 1, \cdot \cdot \cdot ,n.}
			\end{array}
		\end{equation}
		\par
		To complete the proof of the theorem, it suffices to verify that the ``Taylor jet" $J_\xi ^{n}W ({p,t})$ \eqref{extension Taylor jets} satisfies the $n$th compatibility condition. We proceed by induction with respect to the order $m \le n$.
		\par
		For $m = 1$, due to the ${C^{1,\varepsilon }}$-smoothness of the functions $W( {p,t} )$, the first order compatibility conditions are straightforwardly satisfied.
		\par
		For $m = n - 1$, assume that the ${m_1}$th order compatibility conditions hold for all ${m_1} \le m$
		\begin{equation}\label{mth order conditions}
			J_\xi ^{{m_1}}W({p_1},t) - J_{\eta + \xi }^{{m_1}}W(p,t) = O_{{\theta _n} + ({m_1} - 1){\theta _{n - 1}} + \varepsilon }^{H_0^1}({({\| \eta \|_{H_0^1}} + {\| \xi \|_{H_0^1}})^{{m_1} + \varepsilon }}),
		\end{equation}
		where $\xi \in {P_{{N_n}}}(H_0^1)$, ${p_1},p \in {P_{{N_n}}}{\mathcal{M}_{{N_1}}}$, $\varepsilon > 0$ and $\eta : = {p_1} - p$.
		\par
		Subsequently, we will establish the $( {m + 1} )$th order compatibility condition in the following Steps.
		\par
		\textbf{Step 3.} We aim to estimate $J_\xi ^{m + 1}W( {{p_1},t} ) - J_{\eta + \xi }^{m + 1}W( {p,t} )$. To start with, we investigate the compatibility conditions under the specific case of $\xi = 0$. By leveraging $W( {p,t} ) = V( {p,t} ): = V( {{P_{{N_{n - 1}}}}p,t} )$ for all $p \in {P_{{N_n}}}{\mathcal{M}_{{N_1}}}$ together with the analogue of \eqref{mth order conditions} for the already constructed jets $J_\xi ^mV( {p,t} )$, we derive the following:
		\begin{align*}
			V({p_1},t) = W({p_1},t) &= V(p,t) + j_\eta ^{{m_1}}V(p,t) + O_{{m_1}{\theta _{n - 1}} + \varepsilon }^{H_0^1}(\| \eta \|_{H_0^1}^{{m_1} + \varepsilon }) \\
			&= W(p,t) + j_\eta ^{{m_1}}W(p,t) + O_{{\theta _n} + ({m_1} - 1){\theta _{n - 1}} + \varepsilon }^{H_0^1}(\| \eta \|_{H_0^1}^{{m_1} + \varepsilon }).
		\end{align*}
		Therefore, we have
		\begin{align}\label{replace v}
			v: & = V({p_1},t) - V(p,t) = j_\eta ^{{m_1}}V(p,t) + O_{{m_1}{\theta _{n - 1}} + \varepsilon }^{H_0^1}(\| \eta \|_{H_0^1}^{{m_1} + \varepsilon }) \\ \nonumber
			& = W({p_1},t) - W(p,t) = j_\eta ^{{m_1}}W(p,t) + O_{{\theta _n} + ({m_1} - 1){\theta _{n - 1}} + \varepsilon }^{H_0^1}(\| \eta \|_{H_0^1}^{{m_1} + \varepsilon }).
		\end{align}
		\par
		Subsequently, we need to derive the special $(m + 1)$th order compatibility condition when $\xi = 0$. Let us define $R(t): = v - j_\eta ^{m + 1}W(p,t)$. Then, this function satisfies the following problem:
		\begin{equation*}
			\left\{ {\begin{array}{*{20}{c}}
					{{\partial _t}R + AR = {F_1}(W({p_1},t)) - F_1^{[m + 1]}(p,\eta ,t) + {F_2}(W({p_1},t)) - F_2^{[m + 1]}(p,\eta ,t),}\\
					{{P_{{N_n}}}R{|_{t = 0}} = 0.}
			\end{array}} \right.
		\end{equation*}
		\par
		Next, we analyze the expressions $F_1^{[m + 1]}(p,\eta ,t)$ and $F_2^{[m + 1]}(p,\eta ,t)$. By combining the equation \eqref{replace v} with replacement techniques \eqref{Replacement Technique}, we can substitute $j_\eta ^mV(p,t)$ and $j_\eta ^mW(p,t)$ for $v$ in all terms in $F_1^{[m + 1]}(p,\eta ,t)$ and $F_2^{[m + 1]}(p,\eta ,t)$ (the error will be of order $\| \eta \|_{H_0^1}^{m + 1 + \varepsilon }$). Then we obtain
		\begin{multline*}
			F_1^{[m + 1]}(p,\eta ,t) = {F_1}(W(p,t)) + {F'_1}(W(p,t))[j_\eta ^{m + 1}W(p,t)] \\
			+ \sum\limits_{d = 2}^{m + 1} {\frac{1}{{d!}}(F_1^{(d)}(W(p,t))[{{\{ v\} }^d}])} + O_{{\theta _n} + m{\theta _{n - 1}} + \varepsilon }^H(\| \eta \|_{H_0^1}^{m + 1 + \varepsilon }),
		\end{multline*}
		and
		\begin{multline*}
			F_2^{[m + 1]}(p,\eta ,t) = {F_2}(W(p,t)) + {F'_2}(W(p,t))[j_\eta ^{m + 1}W(p,t)] \\
			+ \sum\limits_{d = 2}^{m + 1} {\frac{1}{{d!}}(F_2^{(d)}(W(p,t))[{{\{ v\} }^d}])} + O_{{\theta _n} + m{\theta _{n - 1}} + \varepsilon }^{H_0^1}(\| \eta \|_{H_0^1}^{m + 1 + \varepsilon }).
		\end{multline*}
		Notably, we cannot substitute $j_\eta ^{m + 1}W( {p,t} )$ for $v$, as it requires to satisfy the compatibility condition of order $m + 1$. Indeed, let us consider the terms $[{\{ j_\eta ^mV(p,t)\} ^{d - 1}},j_\eta ^mW(p,t)]$ and $[{\{ j_\eta ^mV(p,t)\} ^d}]$ in $F_1^{[m + 1]}(p,\eta ,t)$ and $F_2^{[m + 1]}(p,\eta ,t)$. Using \eqref{Replacement Technique}:
		\par
		\begin{equation*}
			[{\{ j_\eta ^mV(p,t)\} ^{d - 1}},j_\eta ^mW(p,t)] \to \sum\limits_{\mathop {{n_1} + \cdot \cdot \cdot + {n_d} \le m + 1}\limits_{{n_i} \in {\mathbb{N}^ + }} } {B_{{n_1}, \cdot \cdot \cdot ,{n_d}}^1\{ j_\eta ^{{n_1}}V(p,t), \cdot \cdot \cdot ,j_\eta ^{{n_{d - 1}}}V(p,t),j_\eta ^{{n_d}}W(p,t)\} ,}
		\end{equation*}
		and employing the assumptions \eqref{replace v}, it can be shown that the growth exponent of the remainder does not exceed:
		\begin{equation*}
			( {{n_1} + \cdot \cdot \cdot + {n_{d - 1}}} ){\theta _{n - 1}} + {\theta _n} + ( {{n_d} - 1} ){\theta _{n - 1}} + \varepsilon \le {\theta _n} + m{\theta _{n - 1}} + \varepsilon .
		\end{equation*}
		Here we used conditions \eqref{replace v}. Similarly, using the following substitution technique and condition \eqref{replace v} for $[{\{ j_\eta ^mV(p,t)\} ^d}]$, 
		\begin{equation*}
			[{\{ j_\xi ^{n - 1}V(p,t)\} ^d}] \to \sum\limits_{\mathop {{n_1} + \cdot \cdot \cdot + {n_d} \le m + 1}\limits_{{n_i} \in {\mathbb{N}^ + }} } {B_{{n_1}, \cdot \cdot \cdot ,{n_d}}^2\{ j_\eta ^{{n_1}}V(p,t), \cdot \cdot \cdot ,j_\eta ^{{n_d}}V(p,t)\} } ,
		\end{equation*}
		we conclude that the growth exponent of the remainder is no larger than:
		\begin{equation*}
			( {{n_1} + \cdot \cdot \cdot + {n_d}} ){\theta _{n - 1}} + \varepsilon \le {\theta _n} + m{\theta _{n - 1}} + \varepsilon ,
		\end{equation*}
		where we have utilized the fact that ${\theta _{n - 1}} < {\theta _n}$.
		\par
		By applying the Taylor Theorem to ${F_1} \in C_b^{m + 1,\varepsilon }$ and ${F_2} \in C_b^{m + 1,\varepsilon }$ together with the estimate \eqref{Estimation of V} for $v$, we deduce that:
		\begin{equation*}
			\begin{array}{*{20}{c}}
				{{F_1}(W({p_1},t)) - F_1^{[m + 1]}(p,\eta ,t) = {F'_1}(W(p,t))R + O_{{\theta _n} + m{\theta _{n - 1}} + \varepsilon }^H(\| \eta \|_{H_0^1}^{m + 1 + \varepsilon }),}\\
				{{F_2}(W({p_1},t)) - F_2^{[m + 1]}(p,\eta ,t) = {F'_2}(W(p,t))R + O_{{\theta _n} + m{\theta _{n - 1}} + \varepsilon }^{H_0^1}(\| \eta \|_{H_0^1}^{m + 1 + \varepsilon }).}
			\end{array}
		\end{equation*}
		Therefore, the function $R$ satisfies the equation:
		\begin{equation*}
			\left\{
			\begin{array}{cr}
				\multicolumn{2}{c}{{\partial _t}R + AR - ({F'_1}(W(p,t))R + {F'_2}(W(p,t))R) = O_{{\theta _n} + m{\theta _{n - 1}} + \varepsilon }^H(\| \eta \|_{H_0^1}^{m + 1 + \varepsilon })} \\
				& \quad + O_{{\theta _n} + m{\theta _{n - 1}} + \varepsilon }^{H_0^1}(\| \eta \|_{H_0^1}^{m + 1 + \varepsilon }), \\
				\multicolumn{2}{c}{P_{{N_n}}R|_{t = 0} = 0.}
			\end{array}
			\right.
		\end{equation*}
		\par
		Additionally, based on the selection of ${\theta _{n - 1}}$ and ${\theta _n}$, we can use Lemma \ref{my main lemma} to derive the following estimate:
		\begin{equation*}
			{\| R \|_{L_{{\theta _n} + m{\theta _{n - 1}} + \varepsilon }^2({\mathbb{R}_ - },H_0^1)}} \le C\| \eta \|_{H_0^1}^{m + 1 + \varepsilon }.
		\end{equation*}
		As a result, we have
		\begin{equation}\label{Higher order relationship of v=w}
			v - j_\eta ^{m + 1}W(p,t) = O_{{\theta _n} + m{\theta _{n - 1}} + \varepsilon }^{H_0^1}(\| \eta \|_{H_0^1}^{m + 1 + \varepsilon })
		\end{equation}
		for all ${p_1},p \in {P_{{N_n}}}{\mathcal{M}_{{N_1}}}$ and $\eta : = {p_1} - p$. Now we replace $j_\eta ^{m + 1}W( {p,t} )$ with $v$.
		\par
		With the help of equation \eqref{Higher order relationship of v=w}, we obtain the following result:
		\begin{multline*}
			J_\xi ^{m + 1}W({p_1},t) - J_{\xi + \eta }^{m + 1}W(p,t) = j_\xi ^{m + 1}W({p_1},t) + j_\eta ^{m + 1}W(p,t) - j_{\xi + \eta }^{m + 1}W(p,t) \\
			+ O_{{\theta _n} + m{\theta _{n - 1}} + \varepsilon }^{H_0^1}({({\| \eta \|_{H_0^1}} + {\| \xi \|_{H_0^1}})^{m + 1 + \varepsilon }}).
		\end{multline*}
		\textbf{Step 4.} We shall estimate $F_i^{[m + 1]}({p_1},\xi ,t) - F_i^{[m + 1]}( {p,\xi + \eta ,t} )$, for $i=1,2$. By leveraging Lemma A.2 in \cite{KZ2024}, we derive the following estimations:
		\begin{multline*}
			F_1^{[m + 1]}({p_1},\xi ,t) - F_1^{[m + 1]}( {p,\xi + \eta ,t} ) = {F'_1}(V(p,t))[j_\xi ^{m + 1}W({p_1},t) + j_\eta ^{m + 1}W(p,t) - j_{\xi + \eta }^{m + 1}W(p,t)] \\
			+ O_{{\theta _n} + m{\theta _{n - 1}} + \varepsilon }^H({({\| \eta \|_{H_0^1}} + {\| \xi \|_{H_0^1}})^{m + 1 + \varepsilon }}),
		\end{multline*}
		and
		\begin{multline*}
			F_2^{[m + 1]}({p_1},\xi ,t) - F_2^{[m + 1]}( {p,\xi + \eta ,t} ) = {F'_2}(V(p,t))[j_\xi ^{m + 1}W({p_1},t) + j_\eta ^{m + 1}W(p,t) - j_{\xi + \eta }^{m + 1}W(p,t)] \\
			+ O_{{\theta _n} + m{\theta _{n - 1}} + \varepsilon }^{H_0^1}({({\| \eta \|_{H_0^1}} + {\| \xi \|_{H_0^1}})^{m + 1 + \varepsilon }})
		\end{multline*}
		for $\xi \in {P_{{N_n}}}(H_0^1)$, ${p_1},p \in {P_{{N_n}}}{\mathcal{M}_{{N_1}}}$, $\varepsilon > 0$ and $\eta : = {p_1} - p$.
		\par
		Accordingly, using the conclusion from Step 3, we derive:
		\begin{multline*}
			F_1^{[m + 1]}({p_1},\xi ,t) - F_1^{[m + 1]}( {p,\xi + \eta ,t} ) \\
			= {F'_1}(V(p,t))[J_\xi ^{m + 1}W({p_1},t) - J_{\xi + \eta }^{m + 1}W(p,t)] + O_{{\theta _n} + m{\theta _{n - 1}} + \varepsilon }^H({({\| \eta \|_{H_0^1}} + {\| \xi \|_{H_0^1}})^{m + 1 + \varepsilon }}),
		\end{multline*}
		and
		\begin{multline*}
			F_2^{[m + 1]}({p_1},\xi ,t) - F_2^{[m + 1]}( {p,\xi + \eta ,t} ) \\
			= {F'_2}(V(p,t))[J_\xi ^{m + 1}W({p_1},t) - J_{\xi + \eta }^{m + 1}W(p,t)] + O_{{\theta _n} + m{\theta _{n - 1}} + \varepsilon }^{H_0^1}({({\| \eta \|_{H_0^1}} + {\| \xi \|_{H_0^1}})^{m + 1 + \varepsilon }}),
		\end{multline*}
		where $\xi \in {P_{{N_n}}}(H_0^1)$, ${p_1},p \in {P_{{N_n}}}{\mathcal{M}_{{N_1}}}$, $\varepsilon > 0$ and $\eta : = {p_1} - p$.
		\par
		\textbf{Step 5.} With the necessary conditions established, we can move on to completing the proof for the $( {m + 1} )$th order compatibility condition.
		\par
		Let $R( t ): = J_\xi ^{m + 1}W( {{p_1},t} ) - J_{\xi + \eta }^{m + 1}W( {p,t} )$. From Steps 3, 4, and equation \eqref{extension Taylor jets}, it follows that $R( t )$ solves the problem:
		\begin{equation*}
			\left\{
			\begin{array}{cr}
				\multicolumn{2}{c}{{\partial _t}R + AR - ({F'_1}(V(p,t))R + {F'_2}(V(p,t))R) = O_{{\theta _n} + m{\theta _{n - 1}} + \varepsilon }^H({({\left\| \eta \right\|_{H_0^1}} + {\left\| \xi \right\|_{H_0^1}})^{m + 1 + \varepsilon }})} \\
				& \quad + O_{{\theta _n} + m{\theta _{n - 1}} + \varepsilon }^{H_0^1}({({\left\| \eta \right\|_{H_0^1}} + {\left\| \xi \right\|_{H_0^1}})^{m + 1 + \varepsilon }}), \\
				\multicolumn{2}{c}{\left. P_{{N_n}}R \right|_{t = 0} = 0.}
			\end{array}
			\right.
		\end{equation*}
		\par
		By applying Lemma \ref{my main lemma}, we obtain the result:
		\begin{equation}
			J_\xi ^{m + 1}W({p_1},t) - J_{\xi + \eta }^{m + 1}W(p,t) = O_{{\theta _n} + m{\theta _{n - 1}} + \varepsilon }^{H_0^1}({({\| \eta \|_{H_0^1}} + {\| \xi \|_{H_0^1}})^{m + 1 + \varepsilon }}).
		\end{equation}
		In particular, substituting $m = n - 1$ into the above equation yields
		\begin{equation*}
			\begin{array}{*{20}{c}}
				{{{\| {J_\xi ^nW({p_1},t) - J_{\xi + \eta }^nW(p,t)} \|}_{L_{{\theta _n} + (n - 1){\theta _{n - 1}} + \varepsilon }^2({\mathbb{R}_ - },H_0^1)}} \le C{{({{\| \eta \|}_{H_0^1}} + {{\| \xi \|}_{H_0^1}})}^{n + \varepsilon }},}\\
				{{{\| {J_\xi ^nW({p_1},t) - J_{\xi + \eta }^nW(p,t)} \|}_{{C_{{\theta _n} + (n - 1){\theta _{n - 1}} + \varepsilon }}({\mathbb{R}_ - },H_0^1)}} \le C{{({{\| \eta \|}_{H_0^1}} + {{\| \xi \|}_{H_0^1}})}^{n + \varepsilon }}}
			\end{array}
		\end{equation*}
		for all ${p_1},p \in {P_{{N_n}}}{\mathcal{M}_{{N_1}}}$, $\eta : = {p_1} - p$ and $\xi \in H_0^1$.
		\par
		Finally, by combining the ``Taylor jet" $J_\xi ^{n}W( {p,t} )$ with equation \eqref{closeness}, we complete the proof of Theorem \ref{my theorem}.
	\end{proof}
	
	\section{$C^{n,\varepsilon }$-smooth IF for the Burgers equation}\label{Section 5}
	\noindent
	\par
	The primary goal of this section is to construct the ${C^{n,\varepsilon }}$-smooth IF for equation \eqref{main equation}. Based on the equivalence between the original equation \eqref{main equation} and the transformed equation \eqref{Transformed Equation 2}, we can instead construct the ${C^{n,\varepsilon }}$-smooth extension of IM for equation \eqref{Transformed Equation 2}, and then use spectral projection to obtain the desired ${C^{n,\varepsilon }}$-smooth IF. We now demonstrate that the transformed Burgers equation \eqref{Transformed Equation 2} satisfies the assumptions of Theorem \ref{my theorem}.
	\par
	Let $F_1: = {\mathcal I}_1:H_0^1 \to H$ and ${F_2}: = {{\mathcal I}_2}:H_0^1 \to H_0^1$. By Lemma \ref{Smoothness of the nonlinear terms}, we have ${F_1} \in C_b^{n + 1}(H_0^1,H)$ and ${F_2} \in C_b^{n + 1}(H_0^1,H_0^1)$. Furthermore, for any $K \ge {K_0}(\mathcal{R})$, the maps ${F_1}$ and ${F_2}$ satisfy the following estimates:
	\begin{equation*}
		\begin{array}{*{20}{c}}
			{{{\| {{F'_1}(v)} \|}_{{\mathcal L}(H_0^1,H)}} \le C{K^{ - 1/2}} = :{L_1},}&{{{\| {{F'_2}(v)} \|}_{{\mathcal L}(H_0^1,H_0^1)}} \le {C_K} = :{L_2},}
		\end{array}
	\end{equation*}
	where the constant $C$ depends on $\mathbf{m}$ but is independent of $K$, while the constant $C_K$ depends on $\mathbf{m}$ and $K$, and exhibits monotonic growth as $K$ increases. The linear operator in equation \eqref{transformed Burgers equation} is defined as $A := {\partial _{xx}}$ endowed with Dirichlet boundary conditions. This operator is positive self-adjoint, and its inverse is compact. Its eigenvalues are given by ${\lambda _N} = {N^2}$, where $N \in {\mathbb{N}^ + }$. We are now ready to prove Theorem \ref{main theorem}.
	\begin{proof}[Proof of Theorem \ref{main theorem}] To construct the smooth extension of the IM for the transformed Burgers equation \eqref{transformed Burgers equation}, it suffices to verify that the spectral gap condition \eqref{my spectral gap condition} is satisfied. Indeed, we will demonstrate that the following stronger spectral gap condition holds:
		\begin{equation}
			\begin{array}{*{20}{c}}
				{\frac{{{\lambda _{{N_i} + 1}} - {\lambda _{{N_i}}} - (n - 1){\lambda _{{N_{i - 1}}}}}}{{2\lambda _{{N_i} + 1}^{1/2}(n + 1)}} > {L_1},}&{\frac{{{\lambda _{{N_i} + 1}} - {\lambda _{{N_i}}} - (n - 1){\lambda _{{N_{i - 1}}}}}}{{2(n + 1)}} > {L_2},}&{i = 1,2, \cdot \cdot \cdot n,}&{{\lambda _{{N_0}}} = 0.}
			\end{array}
		\end{equation}
		Clearly, if there exists a sequence $\{ {\lambda _{{N_i}}}\} _{i = 1}^n$ satisfying the above conditions, then the original spectral gap condition \eqref{my spectral gap condition} automatically holds. Next, we proceed to verify that such a sequence can be constructed.
		\par
		Since $L_1=CK^{-1/2}$ and
		\begin{equation*}
			\begin{array}{*{20}{c}}
				{\frac{{{\lambda _{N + 1}} - {\lambda _N}}}{{2\lambda _{N + 1}^{1/2}(n + 1)}} > \frac{{{\lambda _2} - {\lambda _1}}}{{4\lambda _2^{1/2}(n + 1)}},}&{\forall N \in {\mathbb{N}^ + },}
			\end{array}
		\end{equation*}
		we choose $K \gg {K_0}(\mathcal{R})$ such that 
		\begin{equation}
			\frac{{{\lambda _2} - {\lambda _1}}}{{4\lambda _2^{1/2}(n + 1)}} > C{K^{ - 1/2}} = {L_1}.
		\end{equation}
		\par
		On the other hand, since $K$ is fixed, $L_2 = C_K$ is therefore also fixed and as the spectral gap satisfies $\lambda_{N+1} - \lambda_N \sim N$, there exists an integer $\mathcal{N}$ such that:
		\begin{equation*}
			\frac{{{\lambda _{{\mathcal N} + 1}} - {\lambda _{\mathcal N}}}}{{2(n + 1)}} > {C_K} = {L_2}.
		\end{equation*}
		Thus, for all $N > \mathcal{N}$, the following inequalities hold:
		\begin{equation}
			\begin{array}{*{20}{c}}
				{\frac{{{\lambda _{N + 1}} - {\lambda _N}}}{{2\lambda _{N + 1}^{1/2}(n + 1)}} > {L_1},}&{\frac{{{\lambda _{N + 1}} - {\lambda _N}}}{{2(n + 1)}} > {L_2}.}
			\end{array}
		\end{equation}
		Moreover, utilizing the divergence property of the spectral gap, i.e.,
		\begin{equation*}
			\mathop {\lim }\limits_{N \to \infty }(\lambda _{N + 1} - \lambda _N) = \infty,
		\end{equation*}
		we can sequentially select indices:
		\begin{equation}
			\mathcal{N} < N_1 < N_2 < \cdot \cdot \cdot < N_n,
		\end{equation}
		such that the stronger spectral gap condition is satisfied. Specifically, if for some candidate value $N_i$ the conditions:
		\begin{equation*}
			\begin{array}{*{20}{c}}
				{\frac{{{\lambda _{{N_i} + 1}} - {\lambda _{{N_i}}} - (n - 1){\lambda _{{N_{i - 1}}}}}}{{2\lambda _{{N_i} + 1}^{1/2}(n + 1)}} > {L_1},}&{\frac{{{\lambda _{{N_i} + 1}} - {\lambda _{{N_i}}} - (n - 1){\lambda _{{N_{i - 1}}}}}}{{2(n + 1)}} > {L_2},}&{{\lambda _{{N_0}}} = 0,}
			\end{array}
		\end{equation*}
		are not satisfied, we proceed to the next available eigenvalue. Due to the unbounded growth of the spectral gap, a sequence $\{ {\lambda _{{N_i}}}\} _{i = 1}^n$ satisfying the conditions must eventually exist. This ensures that the spectral gap condition \eqref{my spectral gap condition} holds. The conclusion of Theorem \ref{main theorem} then follows from Theorem \ref{my theorem}.
	\end{proof}
	\par
	By applying the smooth diffeomorphism, we can see that Theorem \ref{main theorem} indicates that the long time behavior of Burgers equation \eqref{main equation} can be completely determined by the following smooth first-order ODE:
	\begin{equation*}\label{new IF}
		\begin{array}{*{20}{c}}
			{\frac{d}{{dt}}{v_{{N_n}}} - {\partial _{xx}}{v_{{N_n}}} = {P_{{N_n}}}({\mathcal{I}_1}({v_{{N_n}}} + {\widetilde M_{{N_n}}}({v_{{N_n}}}))) + {P_{{N_n}}}({\mathcal{I}_2}({v_{{N_n}}} + {\widetilde M_{{N_n}}}({v_{{N_n}}}))),}&{v = {\mathcal V}(u),}
		\end{array}
	\end{equation*}
	where ${\mathcal{I}_1}$ and ${\mathcal{I}_2}$ are defined in \eqref{Transformed Equation 2}.
	\par
	\vspace{0.5cm}
	\noindent{\bf Statements and Declarations}
	On behalf of all authors, the corresponding author states that there is no conflict of interest. There is no associated data used in this paper.
	
	\begin{comment}
		This paper overcomes the obstacle posed by the convective term and establishes a ${ {{C^{n,{\varepsilon} }}} |_{\{ n \ge 2,{\varepsilon} \in (0,1)\} }}$-smooth extension of the inertial manifold for the one-dimensional Burgers equation. This construction demonstrates that the dynamical behavior of the one-dimensional Burgers equation can be completely determined by a finite-dimensional system of explicit smooth ODEs. To overcome the regularity loss caused by the convective term, we transform the Burgers equation into a new form via a diffeomorphic transformation. Based on this transformed equation, we introduce an abstract parabolic framework with two distinct nonlinearities. With the aid of the Whitney Extension Theorem (A. Kostianko and S. Zelik in [Anal. PDE, 2024]), we establish a new spectral gap condition sufficient to guarantee the smooth extension of the inertial manifold for such abstract parabolic equations. This condition is then verified for the transformed Burgers equation, leading to the successful construction of the smooth inertial manifold extension for the Burgers equation. Our result not only elucidates the long-time behavior of the Burgers equation, but also provides the first result of constructing the smooth extension of inertial manifolds under the condition of regularity loss due to the convective term. We therefore believe this work will be of interest to researchers in PDEs and dynamical systems.
	\end{comment}
\end{document}